\documentclass{siamart250211}

\usepackage{microtype}
\usepackage{amssymb}
\usepackage{bm}
\usepackage{multirow} 
\usepackage{amsmath}

\usepackage{epstopdf}
\ifpdf
\DeclareGraphicsExtensions{.eps,.pdf,.png,.jpg}
\else
\DeclareGraphicsExtensions{.eps}
\fi
\usepackage{xcolor}
\usepackage{tikz}
\usepackage{float}
\usepackage{graphicx}
\usepackage{booktabs}

\usepackage{caption}        
\usepackage{subcaption}

 \usepackage{siunitx}    
 \usepackage{array}       

\usepackage{algorithmic}
\usepackage{tabularx}
\newsiamremark{remark}{Remark}
\newsiamthm{coro}{Corollary}
\newsiamthm{prop}{Proposition}

\newcommand{\newsiamdefn}[2]{
	\theoremstyle{plain}
	\theoremheaderfont{\normalfont\sc}
	\theorembodyfont{\normalfont}
	\theoremseparator{.}
	\theoremsymbol{}
	\newtheorem{#1}{#2}
}
\newsiamdefn{defn}{Definition}
\newsiamdefn{amp}{Assumption}
\newsiamdefn{example}{Example}

\usepackage{amsfonts}
\usepackage{mathtools}
\usepackage{nicefrac}
\usepackage{mathrsfs}
\usepackage{nicematrix}
\usepackage{amsopn}

\allowdisplaybreaks[4]

\headers{Quasi-orthogonal method for Schr{\"o}dinger eigenpairs}{S. Wang, and A. Zhou}

\title{A quasi-orthogonal method based on the inverse operator\\ for Schr{\"o}dinger eigenvalue problems\thanks{
		\funding{This work was supported by the National Natural Science Foundation of China under
				grant 12571446 and the National Key R \& D Program of China under grants 2025YFA1016600 and 2025YFA1016601.}}
}

\author{
	Shengyue Wang\thanks{SKLMS, Academy of Mathematics and Systems Science, Chinese Academy of Sciences, Beijing 100190, China; and School of Mathematical Sciences, University of Chinese Academy of Sciences, Beijing 100049, China (\email{wangshengyue@amss.ac.cn}, \email{azhou@lsec.cc.ac.cn}).}
	\and Aihui Zhou\footnotemark[2]
}

\begin{document}
	
	\maketitle

	% Abstract

\begin{abstract}
	Computing many eigenpairs of the Schr{\"o}dinger operator presents a computational bottleneck in large-scale quantum simulations due to the global communication overhead of explicit orthogonalization. To address this issue, we propose a quasi-orthogonal evolution model utilizing inverse operators and develop a corresponding discrete numerical scheme. Instead of forcing explicit orthogonalization, the proposed framework confines the numerical approximations within a quasi-Stiefel set—ensuring the iterates maintain full column rank without requiring $\left\langle U, U \right\rangle=I_N$. Moreover, the method naturally absorbs orthogonality errors and asymptotically converges to the exact eigenfunctions, even when initialized with non-orthogonal random data. The scheme guarantees monotonic dissipation of the target energy functional, with exponential convergence rates rigorously established for the discrete energy, gradient, and eigenfunction approximations. Furthermore, infinite-dimensional analysis proves that the admissible time step size is independent of the spatial discretization. This property overcomes the mesh-dependent stability constraints typical of conventional explicit or semi-implicit schemes, permitting larger time increments to accelerate global convergence. Numerical experiments validate the theoretical findings.
\end{abstract}

	% Keywords
	\begin{keywords}
		eigenvalue problem, quasi-orthogonality, evolution equation, energy dissipation, convergence analysis
	\end{keywords}
	
	% MSC codes
	\begin{MSCcodes}
			65N25, 65N12, 47J35, 37L05 
	\end{MSCcodes}
	
	\section{Introduction}\label{sec: intro}
		The precise computation of many eigenpairs for the Schr{\"o}dinger operator is a cornerstone of modern computational quantum mechanics and condensed matter physics \cite{lehtovaara2007solving, saad2011numerical, trefethen1997numerical}. In high-fidelity simulations, the spatial discretization of many-body Schr{\"o}dinger equations routinely generates massive algebraic eigenvalue problems with millions of degrees of freedom \cite{cances2023density, chen2014adaptive, saad2010numerical}. A fundamental physical constraint in these high-dimensional systems is the strict mutual orthogonality of the computed eigenfunctions. In Kohn-Sham density functional theory, for instance, these orthogonal eigenfunctions represent isolated quantum states of non-interacting fermions, rigorously enforcing the Pauli exclusion principle at the discrete level \cite{dai2020, dai2021convergent, martin2020electronic, zhang2014gradient}. Similarly, in the analysis of Bose-Einstein condensates via the Gross-Pitaevskii eigenvalue problem, orthogonal excited states are essential for characterizing macroscopic quantum phenomena such as superfluidity and quantized vortices \cite{bao2013mathematical, chen2024convergence, henning2020sobolev}. Consequently, the physical fidelity of any large-scale quantum eigensolver depends heavily on its ability to preserve orthogonality throughout the numerical approximation \cite{babuska1991eigenvalue}.

	{To satisfy this condition, classical algorithms (e.g., Krylov subspace methods and block PCG) rely on explicit orthogonalization procedures such as modified Gram-Schmidt or Rayleigh-Ritz projections to solve the resulting algebraic systems \cite{bjorck1994numerics, knyazev2001toward, sorensen1992implicit, walker1988implementation}. However, these operations introduce inherent scalability bottlenecks \cite{hernandez2005slepc, smoktunowicz2006note}. As spatial discretization is refined, recurrent global inner product evaluations impose collective communication penalties and synchronization barriers on distributed-memory architectures \cite{hwang2010parallel}. This communication overhead restricts parallel efficiency, particularly for complex quantum systems requiring thousands of eigenpairs \cite{jarlebring2012linear}. While techniques such as Chebyshev semi-iterative methods or s-step algorithms can delay orthogonalization to mitigate these costs, they remain sensitive to spectral gaps and eventually require periodic reorthogonalization to maintain numerical stability. Consequently, developing mathematical formulations that intrinsically circumvent global explicit orthogonalization remains an active objective in computational physics and numerical linear algebra.}

To bypass explicit orthogonalization, Dai et al. \cite{dai2020,dai2021convergent} developed an orthogonality-preserving framework based on an extended gradient flow. Although this approach eliminates explicit orthogonalization, the gradient dynamics converge slowly for high-frequency components. Furthermore, numerical stability imposes mesh-dependent time-step limitations, increasing the iteration count and overall computational cost. Chu et al. \cite{chu2025orthogonality} extended this framework to Sobolev spaces to accelerate convergence. {However, in discrete implementations of these continuous models, exact orthogonality is unattainable owing to discretization and algorithmic errors arising from sources such as imperfect initial data, numerical integration, and iterative truncation\cite{cances2017discretization}. The resulting loss of orthogonality may cause spurious duplication of converged eigenvalues and distort their algebraic multiplicity. Such spectral redundancy produces linearly dependent eigenvectors, which often lead to iterative stagnation, numerical divergence, or force periodic reorthogonalization—ultimately undermining the core purpose of the orthogonalization-free framework.}

To address these issues, a quasi-orthogonal continuous model and its iterative methods were proposed \cite{wang2025quasi,wang2026quasi}. {Rather than strictly enforcing the condition $\langle U, U \rangle = I_N$, this framework relaxes the constraint by confining numerical approximations within a quasi-Stiefel set—a domain where the iterates maintain full column rank without requiring strict mutual orthogonality. Moreover, this framework ensures asymptotic convergence to mutually orthogonal eigenfunctions, even from non-orthogonal random initial data.}
Absorbing numerical perturbations without explicit orthogonalization improves algorithmic robustness and scalability for large-scale problems. Despite these theoretical merits, existing temporal discretizations are still subject to mesh-dependent stability constraints, which necessitate small time steps and slow down convergence.

Consequently, this work proposes a new continuous quasi-orthogonal model based on the inverse operator and develops a corresponding numerical scheme to overcome these stability constraints. The proposed framework preserves the core advantages of earlier quasi-orthogonal models: the ability to converge from random initial data and naturally absorb orthogonality errors—while establishing infinite-dimensional stability. This property ensures that the admissible time step size remains independent of the spatial mesh. By circumventing the mesh-dependent stability constraints typical of conventional explicit or semi-implicit schemes, this mesh-independence permits larger time increments to accelerate global convergence.

\begin{figure}[ht]
	\centering
	\begin{minipage}{0.48\textwidth}
		\centering
		\includegraphics[width=0.8\linewidth]{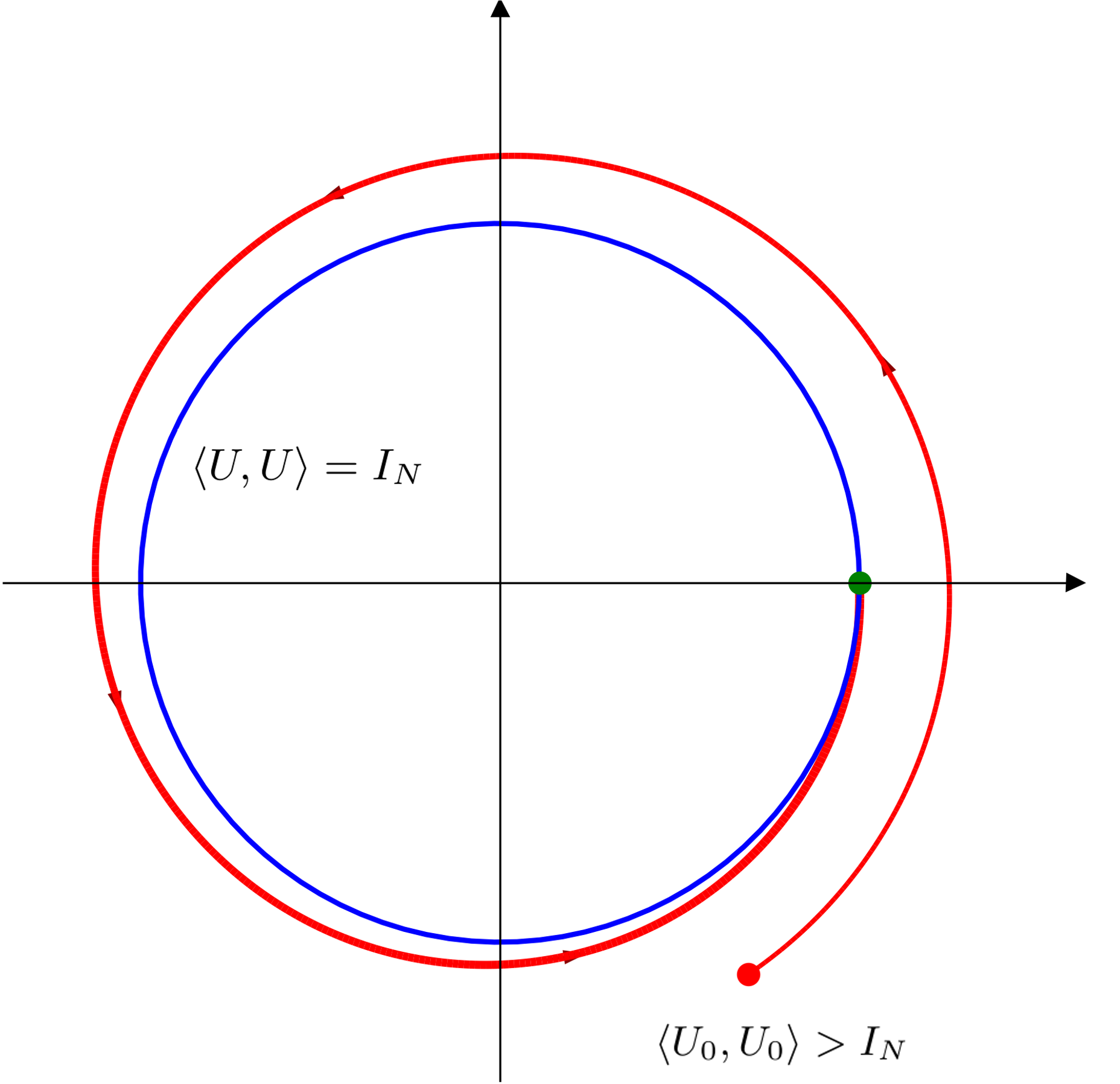}
		\caption{Quasi-orthogonality of solution of \eqref{equ: evolution problem}}
		\label{fig:utgeqin} 
	\end{minipage}
	\hfill
	\begin{minipage}{0.48\textwidth}
		\centering
		\includegraphics[width=0.78\linewidth]{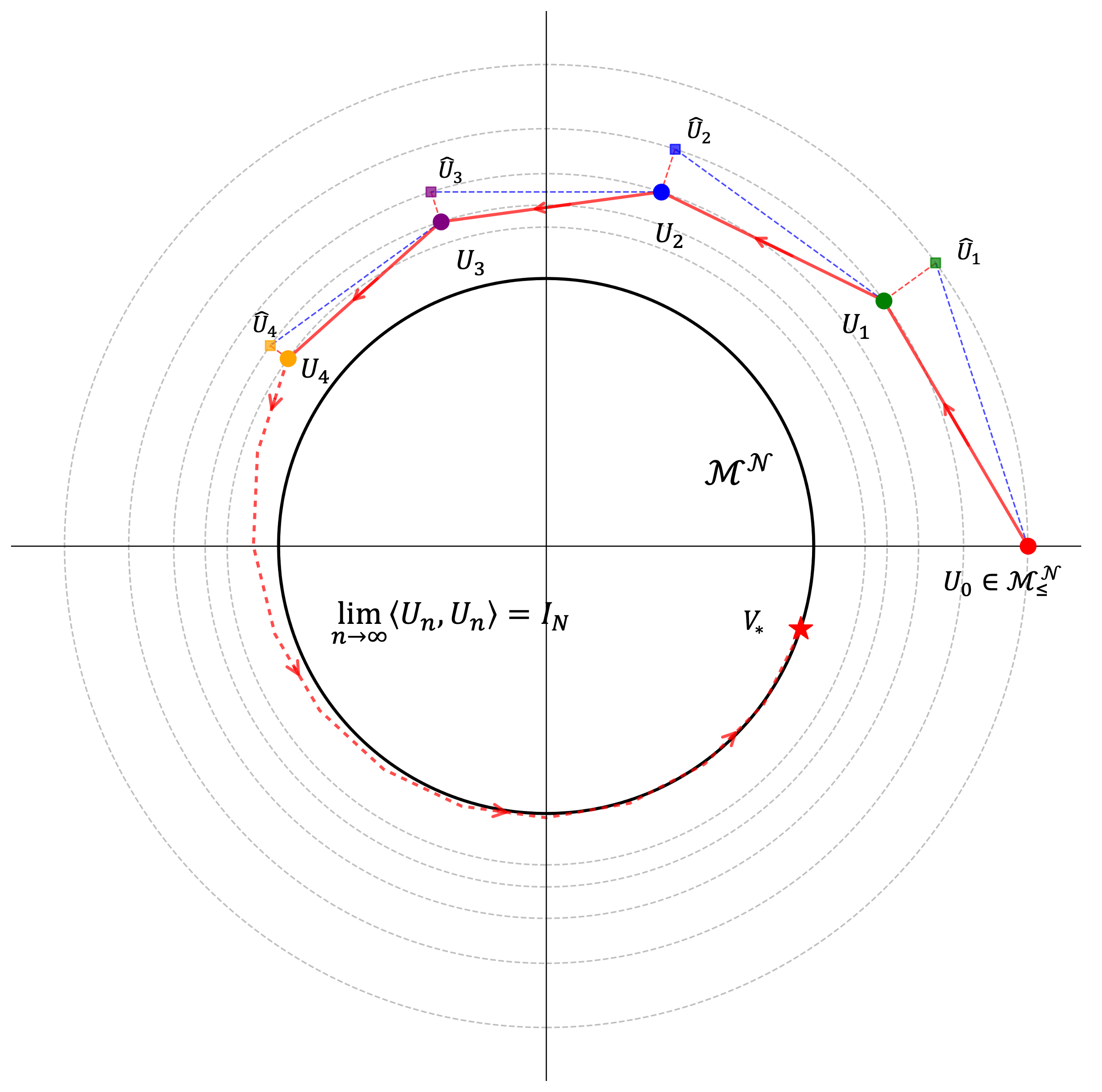}
		\caption{Quasi-orthogonality of solution of \eqref{equ: numerical scheme}}
		\label{fig:ugeqin} 
	\end{minipage}
\end{figure}

The main contributions of this work encompass three key aspects.
\begin{itemize}
	\item We formulate a continuous quasi-orthogonal evolution model based on the inverse operator, eliminating the requirement for explicit orthogonalization and strict initial orthogonality. Theoretical analysis guarantees the preservation of quasi-orthogonality (Theorem \ref{thm:U in quasi-Steifel}, Fig.~\ref{fig:utgeqin}), monotonic dissipation of the target energy functional (Theorem \ref{thm:energy decay}), and global exponential convergence to the target eigenfunctions (Theorem \ref{thm: exponential convergence}).
	\item We construct a temporal discretization scheme inheriting these continuous structural properties. Theorem \ref{thm: stability} proves that the iterative sequence maintains the quasi-orthogonality (Fig.~\ref{fig:ugeqin}). Theorem \ref{thm: Energy E(Un) decay} guarantees monotonic discrete energy decay under a mesh-independent time step constraint, which avoids conventional mesh-related stability limitations.
	\item We quantify the discrete asymptotic dynamics and provide empirical validation. Theorem \ref{thm: asymptotic orthogonality of Un} establishes the exponential decay of the orthogonality error, while Theorem \ref{thm: exponential rate of Un} derives explicit exponential convergence rates for the energy, gradient, and eigenfunction approximations. Numerical simulations confirm that the admissible time step size is independent of spatial discretization, with numerical accuracy remaining stable even as the time step size is increased for a fixed spatial mesh.
\end{itemize}

This paper is organized into five sections. Section \ref{sec: preliminaries} introduces the mathematical notation and formulates a eigenvalue problem. Section \ref{sec: continuous model} proposes the continuous quasi-orthogonal model, establishing its global well-posedness and asymptotic convergence. Section \ref{section:Time discretization} develops the discrete numerical scheme, proving its quasi-orthogonality, energy dissipation, and exponential convergence. Section \ref{section:Numerical experiments} presents numerical experiments validating the theoretical findings on two quantum mechanical models for illustrations. Section \ref{sec: conclusion} concludes the paper.

	\section{Preliminaries}\label{sec: preliminaries}
	
	\subsection{Notation}
	
	Let $\Omega\subset\mathbb{R}^{d}$ $(d\in\mathbb{N}_{+})$ be a bounded domain with boundary $\partial\Omega$. We denote by $H^1(\Omega)$ the standard Sobolev space and define
	$$H_0^1(\Omega) = \{ v \in H^1(\Omega) : v = 0 \text{ on } \partial\Omega \}.$$
	The $L^{2}(\Omega)$ inner product and its induced norm are denoted by $(\cdot,\cdot)$ and $\|\cdot\|$, respectively, where
	
	$$(u,v) = \int_{\Omega} uv \, dx, \qquad \|u\| = \sqrt{(u,u)}.$$
	
	Given a potential function $\mathcal{V}:\Omega\rightarrow\mathbb{R}$, we associate with the Schr{\"o}dinger operator $-\Delta +\mathcal{V}$ the symmetric bilinear form $a(\cdot,\cdot):H_{0}^{1}(\Omega)\times H_{0}^{1}(\Omega)\rightarrow\mathbb{R}$ defined by
	$$a(u,v) \coloneqq \int_{\Omega} \left( \nabla u \cdot \nabla v + \mathcal{V}uv \right) dx.$$
	We impose the standard assumptions on $a(\cdot,\cdot)$: it is symmetric, bounded, and coercive. Specifically, there exist positive constants $c_{1}$ and $c_{2}$ such that
	$$c_1 \|\nabla u\|^2 \leqslant a(u,u) \leqslant c_2 \|\nabla u\|^2, \qquad \forall\, u \in H_0^1(\Omega).$$
	Under these assumptions, the space $H_{0}^{1}(\Omega)$ equipped with the inner product $a(\cdot,\cdot)$ forms a Hilbert space. We denote the induced norm by $\|u\|_{a}\coloneqq\sqrt{a(u,u)}$. Furthermore, the Poincaré inequality guarantees the existence of a constant $c_{\Omega}>0$, depending only on $\Omega$, such that
	$$\|u\| \leqslant c_{\Omega}\|u\|_a, \quad \forall\, u \in H_0^1(\Omega).$$
	
	For any $N\in\mathbb{N}_{+}$, we consider the product Hilbert spaces $(L^{2}(\Omega))^{N}$ and $(H_{0}^{1}(\Omega))^{N}$, defined as
	$$\begin{aligned}
		& \big(L^2(\Omega)\big)^N = \left\{\left(u_1, u_2, \cdots, u_N\right): u_i\in L^2(\Omega) , i = 1, 2, \cdots,N\right\}, \\
		& \big(H_0^1(\Omega)\big)^N = \left\{\left(u_1, u_2, \cdots, u_N\right): u_i\in H_0^1(\Omega) , i = 1, 2, \cdots,N\right\}.
	\end{aligned}$$
	
	For any $U,V\in(L^{2}(\Omega))^{N}$, we define the $L^{2}$ inner product matrix $\langle U,V\rangle\in\mathbb{R}^{N\times N}$ and the associated global inner product $(U,V)$ as
	$$\langle U, V \rangle \coloneqq \Big((u_i, v_j)\Big)_{i, j=1}^N \quad \text{and} \quad (U, V) \coloneqq \operatorname{tr}(\langle U, V \rangle) = \sum_{i=1}^N (u_i, v_i),$$
	with the corresponding norm $\|U\|\coloneqq\sqrt{(U,U)}$. Similarly, for $U,V\in(H_{0}^{1}(\Omega))^{N}$, the inner product matrix $\langle U,V\rangle_{a}\in\mathbb{R}^{N\times N}$ and the global inner product $(U,V)_{a}$ are given by
	$$\langle U, V \rangle_a \coloneqq \Big(a(u_i, v_j)\Big)_{i, j=1}^N \quad \text{and} \quad (U, V)_a \coloneqq \operatorname{tr}(\langle U, V \rangle_a),$$
	with the induced norm $\|U\|_{a}\coloneqq\sqrt{(U,U)_{a}}$.
	
	We introduce the Stiefel manifold associated with the $L^{2}$ inner product as
	$$\mathcal{M}^N \coloneqq \left\{ U \in \big(H_0^1(\Omega)\big)^N : \langle U, U \rangle = I_N \right\},$$
	where $I_{N}$ denotes the $N\times N$ identity matrix. Let $\mathcal{G}^{N}$ denote the Grassmann manifold, which is a quotient manifold of $\mathcal{M}^{N}$ defined by $\mathcal{G}^{N}=\mathcal{M}^{N}/\sim$, where the equivalence relation $\sim$ is given by $U\sim\hat{U}$ if and only if $\hat{U}=UQ$ for some orthogonal matrix $Q\in\mathcal{O}^{N}$. For any $U\in(H_{0}^{1}(\Omega))^{N}$, the equivalence class is denoted by
	$$[U] \coloneqq \{ UQ : Q \in \mathcal{O}^N \},$$
	and thus $\mathcal{G}^{N}=\{[U]:U\in\mathcal{M}^{N}\}$.
	
	We define the distance between equivalence classes in $(H_{0}^{1}(\Omega))^{N}$ as
	$$\| [U] - [V] \|_a \coloneqq \min_{Q \in \mathcal{O}^N} \| U - VQ \|_a.$$
	
	To facilitate subsequent discussions, for symmetric matrices $A,B\in\mathbb{R}^{N\times N}$, we adopt the positive semidefinite partial order $A\leqslant B$, which holds if and only if $a^{\top}Aa\leqslant a^{\top}Ba$ for all $a\in\mathbb{R}^{N}$. Finally, let $\lambda(A)$ denote the spectrum of $A\in\mathbb{R}^{N\times N}$, and let $\lambda_{min}(A)$ and $\lambda_{max}(A)$ denote its smallest and largest eigenvalues, respectively.
	
	\subsection{Problem setting}
	
	We seek the $N$ $(N\in\mathbb{N}_{+})$ smallest eigenvalues and their corresponding mutually orthogonal eigenfunctions, characterized by the eigenvalue problem: Find eigenpairs $(u_{i},\lambda_{i})\in H_{0}^{1}(\Omega)\times\mathbb{R}$ for $i=1,\dots,N$ such that
	$$\left\{\begin{aligned}
		&a(u_i, v) = \lambda_i (u_i, v), \quad \forall\, v \in H_0^1(\Omega), \\
		&(u_i, u_j) = \delta_{ij}.
	\end{aligned}\right.$$
	This system is equivalent to the following formulation: Find $U\in\mathcal{M}^{N}$ and a diagonal matrix $\Lambda=\operatorname{diag}(\lambda_{1},\dots,\lambda_{N})$ satisfying
	\begin{equation}\label{equ: eigenvalue problem}
	\langle U, V \rangle_a = \langle U \Lambda, V \rangle, \quad \forall\, V \in \big(H_0^1(\Omega)\big)^N.
	\end{equation}
	We assume that the eigenvalues are ordered as $\lambda_{1}\leqslant\lambda_{2}\leqslant\dots\leqslant\lambda_{N}<\lambda_{N+1}$, and we denote by $V_{*}=(v_{1},\dots,v_{N})$ a solution to \eqref{equ: eigenvalue problem} corresponding to the first $N$ eigenvalues.
	
	The solution $V_{*}$ naturally characterizes the minimizer of the energy over the Stiefel manifold. Specifically, we consider the minimization problem
	\begin{equation}\label{equ: minimization problem in Stiefel}
		E(V_*) = \min_{U \in \mathcal{M}^N} E(U), \quad \text{where } E(U) \coloneqq \frac{1}{2} (U, U)_a.
	\end{equation}
	Since the energy functional $E(\cdot)$ is invariant under orthogonal transformations, i.e., $E(UQ)=E(U)$ for all $Q\in\mathcal{O}^{N}$, the problem \eqref{equ: minimization problem in Stiefel} can be equivalently formulated on the manifold $\mathcal{G}^{N}$ as
	\begin{equation}
	\label{equ: minimization problem in Grassman} 
	E([V_*]) = \min_{[U] \in \mathcal{G}^N} E(U).
	\end{equation}
	Provided the spectral gap assumption $\lambda_{N}<\lambda_{N+1}$ holds, the invariant subspace spanned by the first $N$ eigenfunctions is unique; consequently, $[V_{*}]$ is the unique minimizer of \eqref{equ: minimization problem in Grassman}.
	
	We remark that the analysis presented in this paper can be readily extended to more general bilinear forms $a(\cdot,\cdot)$ satisfying a G\aa rding inequality \cite[Remark 2.9]{dai2008convergence}. Specifically, our theoretical results remain valid for a more general Schr{\"o}dinger operator provided that there exists a constant $C>0$ such that
	$$a(u, u) \geqslant \frac{1}{2} \|\nabla u\|^2 - C \|u\|^2, \qquad \forall\, u \in H_0^1(\Omega).$$

	\section{A quasi-orthogonal model}\label{sec: continuous model}
	
	To address the eigenvalue problem \eqref{equ: eigenvalue problem}, we exploit its intrinsic analytical structure to propose a continuous evolution equation. The solution $U(t)$ of this dynamical system, which we term the quasi-orthogonal model, inherently preserves quasi-orthogonality and converges asymptotically to the minimizer $[V_*]$ as $t \to \infty$.
	
	\subsection{The evolution equation}
	
	Rather than strictly enforcing the manifold constraints required by the minimization problems \eqref{equ: minimization problem in Stiefel} and \eqref{equ: minimization problem in Grassman}, we introduce an evolution problem that naturally relaxes these restrictions. Defining the inverse operator $\mathcal{G} \coloneqq (-\Delta + \mathcal{V})^{-1}: H^{-1}(\Omega) \to H_0^1(\Omega)$, we seek a solution $U(t) \in C^1\left([0,\infty); \big(H_0^1(\Omega)\big)^N\right)$ such that
	\begin{equation}\label{equ: evolution problem}
		\left\{  \begin{aligned}
			&\frac{\mathrm{d}U}{\mathrm{d}t} = \mathcal{G} U - U \langle \mathcal{G} U, U \rangle \coloneqq \nabla_G E_{\mathcal{G}}(U), 
			\\& U(0) = U_0.
		\end{aligned}\right.
	\end{equation}
	
	\begin{remark}\label{rem: gradient flow connection}
		Consider the auxiliary eigenvalue problem associated with the operator $\mathcal{G}$:
		\begin{equation}\label{eq: inverse eigenvalue problem}
			\left\{ \begin{aligned}
				&\langle \mathcal{G}U, V\rangle = \langle U\Lambda^{-1}, V\rangle, \quad \forall\, V \in \big(H_0^1(\Omega)\big)^N, \\
				&U \in \mathcal{M}^N.
			\end{aligned}\right.
		\end{equation}
		The corresponding maximization problem formulated on the Grassmann manifold reads
		\begin{equation}\label{eq: inverse maximum problem}
			\max_{[U] \in \mathcal{G}^N} E_{\mathcal{G}}(U) \coloneqq \frac{1}{2} (U, \mathcal{G}U).
		\end{equation}
		It is straightforward to verify that the solution $V_*$ of \eqref{equ: eigenvalue problem} satisfies \eqref{eq: inverse eigenvalue problem}, with $[V_*]$ being the unique maximizer of \eqref{eq: inverse maximum problem}. Following the geometrical framework established in \cite{edelman1998geometry}, the Grassmannian gradient of $E_{\mathcal{G}}$ at $[U] \in \mathcal{G}^N$ is explicitly given by
		\begin{equation*}
			\nabla_G E_{\mathcal{G}}(U) = \mathcal{G} U - U \langle \mathcal{G} U, U \rangle.
		\end{equation*}
		In the proposed formulation \eqref{equ: evolution problem}, $\nabla_G E_{\mathcal{G}}$ serves as the extension of this gradient from $\mathcal{G}^N$ to the unconstrained space $\big(H_0^1(\Omega)\big)^N$. Since $V_*$ constitutes a critical point, the condition $\nabla_G E_{\mathcal{G}}(V_*) = 0$ holds. Consequently, $V_*$ manifests as a steady state of the evolution equation \eqref{equ: evolution problem}, providing the theoretical premise that the asymptotic limit of $U(t)$ approximates the target eigenpairs.
	\end{remark}
	
	Before proceeding to the analysis of the model, we recall the essential properties of the operator $\mathcal{G}$. For any $U, V \in \big(L^2(\Omega)\big)^N$, the following symmetry and inner product relations hold:
	\begin{align*}
		&(\mathcal{G} U, V) = (\mathcal{G} V, U) = (\mathcal{G} U, \mathcal{G} V)_a, \\
		&(\mathcal{G} U, V)_a = (\mathcal{G} V, U)_a = (U, V).
	\end{align*}
	
	The analytical structure of the solution are characterized by the following theorem (see \cite[Theorem 3.10]{wang2025quasi}).
	\begin{theorem}\label{expression of solution}
		Suppose that the solution $U(t)$ of \eqref{equ: evolution problem} exists on $[0, T)$ for some $T > 0$. Then, $U(t)$ admits the explicit analytical representation:
		\begin{equation}\label{eq:expression of solution}
			U(t) = \exp(\mathcal{G}t) U_0 \left[ I_N - \langle U_0, U_0 \rangle + \langle U_0, \exp(2\mathcal{G}t) U_0 \rangle \right]^{-1/2} Q(t),
		\end{equation}
		where $Q(t) \in \mathcal{O}^N$ is an orthogonal matrix dependent on $t$.
	\end{theorem}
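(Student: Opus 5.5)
The idea is to linearize the Riccati-type flow \eqref{equ: evolution problem} by writing $U(t) = W(t)B(t)$ with $W(t) \coloneqq \exp(\mathcal{G}t)U_0$. Here $\exp(\mathcal{G}t)$ is well defined, since $\mathcal{G}$ is a bounded, self-adjoint, positive (compact) operator on $L^2$. The function $W$ solves the linear part $W' = \mathcal{G}W$, and $B(t)\in\mathbb{R}^{N\times N}$ is chosen to absorb the nonlinear term. Substituting $U = WB$ gives
\[
U' = \mathcal{G}WB + WB' = \mathcal{G}U - U\langle \mathcal{G}U,U\rangle .
\]
This holds provided $WB' = -WB\,\langle \mathcal{G}U,U\rangle$. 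It therefore suffices that $B' = -B\,B^\top\langle \mathcal{G}W,W\rangle B$, using $\langle\mathcal{G}U,U\rangle = B^\top\langle\mathcal{G}W,W\rangle B$. Conversely, once a candidate $U$ is built, uniqueness of the ODE solution in $(H_0^1)^N$ identifies it with the given solution. The right-hand side is locally Lipschitz, because $\mathcal{G}$ is bounded from $L^2$ to $H_0^1$.

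Rather than solving for $B$ directly, I would work with the Gram matrix $M(t) \coloneqq \langle U,U\rangle$. Using the symmetry of $\mathcal{G}$,
\[
M' = 2\langle \mathcal{G}U,U\rangle - \langle\mathcal{G}U,U\rangle M - M\langle\mathcal{G}U,U\rangle .
\]
Set $P \coloneqq BB^\top$. From $B' = -BB^\top K B$ with $K \coloneqq \langle\mathcal{G}W,W\rangle$ we get $P' = -2PKP$, so $(P^{-1})' = 2K$. Moreover $K = \tfrac12 \frac{d}{dt}\langle W,W\rangle$, since $\frac{d}{dt}\langle W,W\rangle = 2\langle \mathcal{G}W,W\rangle$. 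Hence
\[
P(t)^{-1} = P(0)^{-1} + \langle W(t),W(t)\rangle - \langle U_0,U_0\rangle .
\]
With $B(0)=I_N$ this becomes $P^{-1} = I_N - \langle U_0,U_0\rangle + \langle U_0,\exp(2\mathcal{G}t)U_0\rangle \eqqcolon S(t)$. Note that $\langle W,W\rangle = \langle U_0, \exp(2\mathcal{G}t)U_0\rangle$ by self-adjointness and the semigroup property.

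Since $BB^\top = S^{-1}$, polar decomposition gives $B(t) = S(t)^{-1/2}Q(t)$ with $Q(t)\in\mathcal{O}^N$, which is exactly \eqref{eq:expression of solution}. To make this rigorous without assuming $B$ exists a priori, I would argue in the reverse direction. First, I would define $B$ as the solution of the matrix ODE $B' = -BB^\top K B$, $B(0)=I_N$. Then I would show that $U = WB$ solves \eqref{equ: evolution problem}, so by uniqueness it is the solution on $[0,T)$. The computation above then identifies $BB^\top = S^{-1}$, and $B$ stays invertible because $\det B$ satisfies a linear ODE. Alternatively, one can define $Q \coloneqq S^{1/2}B$ and check $Q^\top Q = B^\top S B = B^\top (BB^\top)^{-1}B = I_N$.

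The main obstacle is ensuring that $S(t)$ remains symmetric positive definite on $[0,T)$, so that $S^{-1/2}$ makes sense, together with the extension of the matrix ODE for $B$ to all of $[0,T)$. I would handle this with the identity $S = P^{-1}$ as long as $B$ exists, which already forces $S>0$ there. If $B$ blew up or degenerated at some $t_0<T$, then $P$ would blow up, i.e.\ $S(t_0)$ would be singular. This can be ruled out by noting that $M(t) = B^\top\langle W,W\rangle B$ is bounded on compact subintervals, since $U$ is assumed to exist and be continuous. Combined with $\langle W,W\rangle$ being positive definite (for full-rank $U_0$), this bounds $BB^\top$ on compact subsets and prevents blow-up before $T$.
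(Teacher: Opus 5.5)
The paper gives no proof of this statement; it imports it from \cite[Theorem 3.10]{wang2025quasi}. Your linearization is a valid self-contained argument, and its core is correct. With $W=\exp(\mathcal{G}t)U_0$, $K=\langle \mathcal{G}W,W\rangle$ and $B'=-BB^\top K B$, $B(0)=I_N$, the product $WB$ solves \eqref{equ: evolution problem}, so it equals $U$ by uniqueness. Jacobi's formula keeps $\det B\neq 0$. The matrix $(BB^\top)^{-1}$ has derivative $2K=\frac{\mathrm{d}}{\mathrm{d}t}\langle W,W\rangle$, so $BB^\top=S(t)^{-1}$, and $Q=S^{1/2}B$ is orthogonal.

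The step that needs repair is the one you flag as the main obstacle. You rule out blow-up of $B$ before $T$ by bounding $B$ through $M=B^\top\langle W,W\rangle B$ and $\langle W,W\rangle>0$. That requires $U_0$ to have full column rank, but the statement assumes nothing of the kind (full rank is automatic only for $U_0\in\mathcal{M}_{\geqslant}^N$). For rank-deficient data, your argument leaves open that $B$ degenerates along $\ker\langle W,W\rangle$.

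In fact there is no obstacle. Since $\mathcal{G}$ is self-adjoint and positive, $\exp(2\mathcal{G}t)-\mathcal{I}=\sum_{k\geqslant 1}(2t)^k\mathcal{G}^k/k!\geqslant 0$ for $t\geqslant 0$. Therefore
\[
S(t)=I_N+\big\langle U_0,\big(\exp(2\mathcal{G}t)-\mathcal{I}\big)U_0\big\rangle\geqslant I_N \qquad \text{for all } t\geqslant 0 .
\]
So $S(t)^{-1/2}$ is always defined, and on the maximal existence interval of $B$ you get $BB^\top=S^{-1}\leqslant I_N$, i.e.\ $\|B(t)\|_2\leqslant 1$. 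Hence $B$ exists on all of $[0,\infty)$ for every $U_0$, with no appeal to boundedness of the given solution and no rank condition.

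This route also gives a bonus. $WB$ is a global $C^1$ solution for arbitrary $U_0\in\big(H_0^1(\Omega)\big)^N$. That is stronger than Theorem \ref{thm:global well-posedness}, where global existence is derived from energy dissipation only for $U_0\in\mathcal{M}_{\geqslant}^N$.
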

	
	A direct consequence of Theorem \ref{expression of solution} is the exact preservation of orthogonality: provided the initial data satisfies the strict orthogonality condition $\langle U_0, U_0 \rangle = I_N$, the evolutionary sequence maintains $\langle U(t), U(t) \rangle = I_N$ for all $t\in [0,T)$. Furthermore, as established in \cite[Lemma 4.1]{wang2025quasi}, the solution $U(t)$ inherently possesses the property of \textit{quasi-orthogonality} even when initialized with non-orthogonal data.
	
	\begin{theorem}\label{thm:U in quasi-Steifel}
		Suppose that the solution $U(t)$ to \eqref{equ: evolution problem} exists on $[0, T)$ for a given $T > 0$. If the initial data satisfies 
		$$U_0\in  \mathcal{M}_{\geqslant}^N \coloneqq \left\{U\in \big(H_0^1(\Omega)\big)^N: \left\langle U, U \right\rangle \geqslant I_N\right\},$$
		where $\mathcal{M}_{\geqslant}^N$ is referred to as the quasi-Stiefel set, then
		\begin{equation*}
			\begin{aligned}
				U(t) \in \mathcal{M}_{\geqslant}^{N}, \quad \forall\, t \in [0, T).
			\end{aligned}
		\end{equation*}
	\end{theorem}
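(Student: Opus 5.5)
The plan is to track the Gram matrix $M(t) \coloneqq \langle U(t), U(t) \rangle$ and show that $M(t) - I_N$ stays positive semidefinite. We differentiate $M$ along \eqref{equ: evolution problem}. Set $B(t) \coloneqq \langle \mathcal{G}U, U\rangle$, which is symmetric by the symmetry of $\mathcal{G}$. Then
\begin{equation*}
\frac{\mathrm{d}M}{\mathrm{d}t} = \langle \mathcal{G}U - UB, U\rangle + \langle U, \mathcal{G}U - UB\rangle = 2B - BM - MB .
\end{equation*}
Writing $D(t) \coloneqq M(t) - I_N$, this becomes the linear matrix ODE
\begin{equation*}
\frac{\mathrm{d}D}{\mathrm{d}t} = -BD - DB ,
\end{equation*}
because $2B - B(I_N + D) - (I_N + D)B = -BD - DB$. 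Here the coefficient $B(t)$ is a continuous symmetric matrix function on $[0,T)$, since $U \in C^1$.

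The key step is to solve this Lyapunov-type equation explicitly. Let $\Phi(t)$ be the fundamental solution of $\Phi' = -B(t)\Phi$ with $\Phi(0) = I_N$. This is a linear ODE with continuous coefficients, so $\Phi$ exists on all of $[0,T)$ and is invertible there. Then $D(t) = \Phi(t) D(0) \Phi(t)^{\top}$. To check this, differentiate:
\begin{equation*}
\frac{\mathrm{d}}{\mathrm{d}t}\bigl(\Phi D(0)\Phi^{\top}\bigr) = -B\Phi D(0)\Phi^{\top} - \Phi D(0)\Phi^{\top}B ,
\end{equation*}
using $B^{\top} = B$. Uniqueness for linear ODEs then gives the identity. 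Since $U_0 \in \mathcal{M}_{\geqslant}^N$, we have $D(0) \geqslant 0$, and a congruence preserves positive semidefiniteness. Hence $D(t) \geqslant 0$, that is, $\langle U(t), U(t)\rangle \geqslant I_N$ for all $t \in [0,T)$.

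As an alternative cross-check, one could use Theorem \ref{expression of solution}. It gives
\begin{equation*}
M(t) = Q^{\top} S^{-1/2} \langle U_0, e^{2\mathcal{G}t}U_0\rangle S^{-1/2} Q, \qquad S \coloneqq I_N - M_0 + \langle U_0, e^{2\mathcal{G}t}U_0\rangle ,
\end{equation*}
where $M_0 = \langle U_0, U_0\rangle$. Then $M \geqslant I_N$ is equivalent to $\langle U_0, e^{2\mathcal{G}t}U_0\rangle \geqslant S$, i.e.\ to $M_0 \geqslant I_N$. This is immediate, provided $S$ is positive definite, which holds because $\langle U_0, e^{2\mathcal{G}t}U_0\rangle \geqslant M_0$ by positivity of $\mathcal{G}$. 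I would present the ODE argument as the main proof, since it needs no spectral calculus.

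The main obstacle is only a rigorous justification detail: $M$ and $B$ must be differentiable and continuous, respectively. This follows from $U \in C^1([0,T);(H_0^1)^N)$ and the boundedness of $\mathcal{G}$ on $L^2$. Once that is in place, the congruence representation does all the work.
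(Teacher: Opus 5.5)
Your proof is correct. The paper gives no in-text argument for this statement. It cites \cite[Lemma 4.1]{wang2025quasi} immediately after the explicit representation of Theorem \ref{expression of solution}, so its intended route appears to be essentially your cross-check. That route is immediate once the formula is granted. Your observation that $S = I_N + \langle U_0, (e^{2\mathcal{G}t} - \mathcal{I})U_0 \rangle \geqslant I_N$ is worth keeping, because it shows the bracket in \eqref{eq:expression of solution} is invertible for every $t$ and every initial datum. In fact the formula directly gives $\langle U(t),U(t)\rangle - I_N = Q^{\top}S^{-1/2}\bigl(\langle U_0,U_0\rangle - I_N\bigr)S^{-1/2}Q$. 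This is an explicit instance of your congruence $\Phi D(0)\Phi^{\top}$, and together with $S \geqslant I_N$ it also yields $\|\langle U(t),U(t)\rangle - I_N\| \leqslant \|\langle U_0,U_0\rangle - I_N\|$.

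Your main argument takes a different route and avoids Theorem \ref{expression of solution} altogether. That is a genuine economy, since deriving the explicit formula requires functional calculus for $\mathcal{G}$ and more work than the statement being proved. Your route needs only three ingredients: the Gram-matrix identity $D' = -BD - DB$ (the same identity the paper later uses to obtain \eqref{eq:orthogonality time derivative}), the symmetry of $B$, and the $C^1$ regularity of $U$. Because $\Phi(t)$ is invertible, Sylvester's law of inertia also shows that the whole inertia of $\langle U(t),U(t)\rangle - I_N$ is conserved. So exact orthogonality preservation, and the reversed implication $\langle U_0,U_0\rangle \leqslant I_N \Rightarrow \langle U(t),U(t)\rangle \leqslant I_N$, follow from the same two lines.
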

	
	We now rigorously establish the energy dissipation property of the proposed evolution model.
	
	\begin{theorem}\label{thm:energy decay}
		Let $U(t)$ be a solution to \eqref{equ: evolution problem} on $[0, T)$ with initial data $U_0 \in \mathcal{M}_{\geqslant}^{N}$. Then the energy is non-increasing over time, that is,
		\begin{equation*}
			\frac{\mathrm{d}}{\mathrm{d}t} E(U(t)) \leqslant 0.
		\end{equation*}
	\end{theorem}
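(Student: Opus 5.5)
The plan is to differentiate $E(U(t))$ directly along the flow \eqref{equ: evolution problem}. Then I reduce the sign condition to a trace inequality between three Gram matrices: a matrix Cauchy--Schwarz inequality in the $a$-inner product closes most of the gap, and the quasi-orthogonality of Theorem \ref{thm:U in quasi-Steifel} closes the rest.

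\emph{Step 1: the derivative.} Set
\[
M \coloneqq \langle U,U\rangle, \qquad A \coloneqq \langle U,U\rangle_a, \qquad B \coloneqq \langle \mathcal{G}U,U\rangle .
\]
All three are symmetric; $B$ is symmetric by the symmetry of $\mathcal{G}$. Using the identity $(\mathcal{G}U,V)_a=(U,V)$, we get $(U,\mathcal{G}U)_a=\operatorname{tr} M$. Hence
\[
\frac{\mathrm d}{\mathrm dt}E(U)=(U,\dot U)_a=(U,\mathcal{G}U)_a-\operatorname{tr}\big(\langle U,U\rangle_a\langle\mathcal{G}U,U\rangle\big)=\operatorname{tr}M-\operatorname{tr}(AB).
\]
So it suffices to prove $\operatorname{tr}(AB)\geqslant\operatorname{tr}M$.

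\emph{Step 2: rewrite $M$ and $B$ as $a$-Gram matrices.} The identities for $\mathcal{G}$ give
\[
M=\langle \mathcal{G}U,U\rangle_a, \qquad B=\langle \mathcal{G}U,\mathcal{G}U\rangle_a .
\]
By Theorem \ref{thm:U in quasi-Steifel}, $M\geqslant I_N$, so the components of $U$ are linearly independent. Since $\mathcal{G}$ is injective, the components of $\mathcal{G}U$ are also independent, and therefore $B$ is positive definite.

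Now take the $a$-orthogonal projection of the components of $U$ onto $\operatorname{span}(\mathcal{G}U)$. Its Gram matrix is $M B^{-1} M$, and because a projection can only shrink the Gram matrix, we get the matrix Cauchy--Schwarz inequality
\[
M B^{-1} M\leqslant A .
\]

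\emph{Step 3: the trace bound.} Conjugating by $B^{1/2}$ preserves the order, so
\[
\operatorname{tr}(AB)=\operatorname{tr}\big(B^{1/2}AB^{1/2}\big)\geqslant \operatorname{tr}\big(B^{1/2}MB^{-1}MB^{1/2}\big)=\|X\|_F^2,
\qquad X\coloneqq B^{-1/2}MB^{1/2}.
\]
The matrix $X$ is similar to $M$, so its eigenvalues $\mu_i$ are those of $M$; they are real and satisfy $\mu_i\geqslant 1$.

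Schur's inequality gives $\|X\|_F^2\geqslant\sum_i\mu_i^2$. Since each $\mu_i\geqslant1$, we have $\mu_i^2\geqslant\mu_i$, and therefore
\[
\sum_i\mu_i^2\geqslant\sum_i\mu_i=\operatorname{tr}M .
\]
This yields $\tfrac{\mathrm d}{\mathrm dt}E(U(t))\leqslant 0$.

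The main obstacle is Step 3. The Cauchy--Schwarz bound alone only gives $\operatorname{tr}(AB)\geqslant\operatorname{tr}(M^2)$ up to similarity, not $\operatorname{tr}(AB)\geqslant\operatorname{tr}M$. The quasi-orthogonality condition $M\geqslant I_N$ is exactly what bridges that gap, together with the non-normality of $X$ being handled by Schur's inequality. When $M=I_N$ the argument reduces to the familiar orthogonal case.
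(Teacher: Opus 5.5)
Your proof is correct, but it reaches the sign by a different route from the paper. With your $M,A,B$, the paper does not compare Gram matrices. Instead it splits $U=(U-\mathcal{G}UB^{-1})+\mathcal{G}UB^{-1}$ and notes that $U-\mathcal{G}UB^{-1}=-RB^{-1}$, where $R\coloneqq\nabla_G E_{\mathcal{G}}(U)=\dot U$. It then converts the second piece into the $L^2$ pairing $(UB^{-1},R)=N-\operatorname{tr}M$. This yields the exact identity
\[
\frac{\mathrm d}{\mathrm dt}E(U)=-\operatorname{tr}\big(B^{-1}\langle R,R\rangle_a\big)-\operatorname{tr}(M-I_N),
\]
which in your notation is the completed square $\operatorname{tr}(AB)-2\operatorname{tr}M+N=\operatorname{tr}(B^{-1}\langle R,R\rangle_a)\geqslant0$.

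Your chain (projection onto $\operatorname{span}(\mathcal{G}U)$, then Schur's inequality) gives $\operatorname{tr}(AB)\geqslant\operatorname{tr}(M^2)$. As a lower bound in terms of $M$ alone, this is sharper than the value $2\operatorname{tr}M-N$ obtained by dropping the gradient term from the paper's identity, since $\operatorname{tr}(M^2)-2\operatorname{tr}M+N=\|M-I_N\|^2$. For the bare sign statement, your argument is clean and complete.

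What the paper's identity buys is an explicit dissipation term. Because $B^{-1}\geqslant\frac{\lambda_1}{\lambda_{\max}(\langle U_0,U_0\rangle)}I_N$, it bounds $\|\dot U\|_a^2$ by a multiple of $-\frac{\mathrm d}{\mathrm dt}E$. This quantitative form \eqref{eq:energy time derivative} is reused in Theorems \ref{thm:global well-posedness}, \ref{thm: convergence} and \ref{thm: exponential convergence}. Your bound $\frac{\mathrm d}{\mathrm dt}E\leqslant\operatorname{tr}M-\operatorname{tr}(M^2)$ carries no gradient information; its right-hand side vanishes when $M=I_N$. So it could not replace that estimate downstream.

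A minor point: beyond invertibility of $B$, neither route needs the full order relation $M\geqslant I_N$. The paper uses only $\operatorname{tr}M\geqslant N$, and you could do the same via $\operatorname{tr}(M^2)\geqslant(\operatorname{tr}M)^2/N\geqslant\operatorname{tr}M$. Quasi-orthogonality is therefore sufficient to close your gap, rather than exactly what is needed.
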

	\begin{proof}
		Taking the time derivative of the energy yields
		\begin{equation*}
			\begin{aligned}
				\frac{\mathrm{d}   E(U(t)) }{\mathrm{d}  t}  = & \left( U(t), \frac{\mathrm{d}  U(t) }{\mathrm{d}  t} \right)_a
				\\ = & \left( U(t) - \mathcal{G} U(t) \langle \mathcal{G} U(t),  U(t) \rangle^{-1}, \mathcal{G} U(t)- U(t) \langle \mathcal{G} U(t),  U(t) \rangle \right)_a
				\\& + \left(  \mathcal{G} U(t) \langle \mathcal{G} U(t),  U(t) \rangle^{-1}, \mathcal{G} U(t)- U(t) \langle \mathcal{G} U(t),  U(t) \rangle \right)_a
				\\ = & - \left(\nabla_G E_{\mathcal{G}}(U(t))  \langle \mathcal{G} U(t),  U(t) \rangle^{-1}, \nabla_G E_{\mathcal{G}}(U(t)) \right)_a
				\\& +\underbrace{\left(U(t) \langle \mathcal{G} U(t),  U(t) \rangle^{-1}, \mathcal{G}U(t) -U(t) \langle \mathcal{G} U(t),  U(t) \rangle \right)}_{\mathrm{I}},
			\end{aligned}
		\end{equation*}
	where
	\begin{equation*}
		\begin{aligned}
			\mathrm{I} &= \left( U(t) \langle \mathcal{G} U(t), U(t) \rangle^{-1}, \mathcal{G} U(t) \right) - \left( U(t) \langle \mathcal{G} U(t), U(t) \rangle^{-1}, U(t) \langle \mathcal{G} U(t), U(t) \rangle \right)
				\\&= -  \operatorname{tr}\left(\left\langle U(t), U(t) \right\rangle -I_N\right).
			\end{aligned}
		\end{equation*}
		Given that $U(t) \in \mathcal{M}_{\geqslant}^{N}$ ensures $\langle U, U \rangle \geqslant I_N$, it immediately follows that $\operatorname{tr}(\langle U, U \rangle - I_N) \geqslant 0$, which implies $\mathrm{I} \leqslant 0$.We then obtain from the negative definiteness of the leading term that
		\begin{equation}\label{eq:energy time derivative}
			\begin{aligned}
					\frac{\mathrm{d}}{\mathrm{d}t} E(U(t)) &= - \left( \nabla_G E_{\mathcal{G}}(U) \langle \mathcal{G} U, U \rangle^{-1}, \nabla_G E_{\mathcal{G}}(U) \right)_a -  \operatorname{tr}\left(\left\langle U(t), U(t) \right\rangle -I_N\right)
				\\ &\leqslant - \left( \nabla_G E_{\mathcal{G}}(U) \langle \mathcal{G} U, U \rangle^{-1}, \nabla_G E_{\mathcal{G}}(U) \right)_a \leqslant 0.
			\end{aligned}
		\end{equation}
	\end{proof}
	
	\subsection{Well-posedness}
	
	Building upon the energy decay property and the intrinsic quasi-orthogonality established in the preceding discussion, we now investigate the global well-posedness of the evolution equation \eqref{equ: evolution problem}. 
	
	To provide the theory for local existence, we first establish the local Lipschitz continuity.
	
	\begin{lemma}\label{lem: Local Lipschitz property}
		Let $M > 0$ be a constant. If $U, V \in \big(H_0^1(\Omega)\big)^N$ satisfy $\|U\|_a \leqslant M$ and $\|V\|_a \leqslant M$, then there exists a constant $C_M > 0$, depending only on $M$, such that
		\begin{align*}
			\left\| \nabla_G E_{\mathcal{G}}(U) - \nabla_G E_{\mathcal{G}}(V) \right\|_a \leqslant C_M \|U - V\|_a.
		\end{align*}
	\end{lemma}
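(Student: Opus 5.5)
We split the difference of the two gradients into a linear part and a cubic part and bound each separately. Write
\begin{equation*}
\nabla_G E_{\mathcal{G}}(U) - \nabla_G E_{\mathcal{G}}(V) = \mathcal{G}(U-V) - \Big( U\langle \mathcal{G}U,U\rangle - V\langle \mathcal{G}V,V\rangle \Big).
\end{equation*}
The linear term is handled by the properties of $\mathcal{G}$ recalled above. Since $(\mathcal{G}W,\mathcal{G}W)_a = (\mathcal{G}W,W)$, Cauchy--Schwarz in $L^2$ gives $\|\mathcal{G}W\|_a^2 \leqslant \|\mathcal{G}W\|\,\|W\|$. Combining this with the Poincar\'e inequality $\|\mathcal{G}W\| \leqslant c_\Omega \|\mathcal{G}W\|_a$ yields
\begin{equation*}
\|\mathcal{G}W\|_a \leqslant c_\Omega \|W\| \leqslant c_\Omega^2 \|W\|_a.
\end{equation*}
Taking $W = U-V$ bounds the first term by $c_\Omega^2\|U-V\|_a$.

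For the cubic term we telescope:
\begin{equation*}
U\langle \mathcal{G}U,U\rangle - V\langle \mathcal{G}V,V\rangle = (U-V)\langle \mathcal{G}U,U\rangle + V\big(\langle \mathcal{G}U,U\rangle - \langle \mathcal{G}V,V\rangle\big),
\end{equation*}
and further, using the symmetry of $(\mathcal{G}\cdot,\cdot)$,
\begin{equation*}
\langle \mathcal{G}U,U\rangle - \langle \mathcal{G}V,V\rangle = \langle \mathcal{G}(U-V),U\rangle + \langle \mathcal{G}V,U-V\rangle.
\end{equation*}
We need two elementary facts. First, for $W \in (H_0^1)^N$ and $B \in \mathbb{R}^{N\times N}$ we have $\|WB\|_a \leqslant \|W\|_a \|B\|_F$, where $\|\cdot\|_F$ is the Frobenius norm. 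This follows by expanding $\|WB\|_a^2 = \operatorname{tr}(B^\top \langle W,W\rangle_a B)$ and using $\langle W,W\rangle_a \leqslant \operatorname{tr}(\langle W,W\rangle_a) I_N$. Second, entrywise Cauchy--Schwarz gives $\|\langle X,Y\rangle\|_F \leqslant \|X\|\,\|Y\|$.

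Applying these facts with Poincar\'e and the bound on $\mathcal{G}$ from the first paragraph:
\begin{equation*}
\|\langle \mathcal{G}U,U\rangle\|_F \leqslant \|\mathcal{G}U\|\,\|U\| \leqslant c_\Omega^4 M^2,
\end{equation*}
and
\begin{equation*}
\|\langle \mathcal{G}U,U\rangle - \langle \mathcal{G}V,V\rangle\|_F \leqslant 2c_\Omega^4 M\,\|U-V\|_a.
\end{equation*}
Hence the cubic term is bounded by $3c_\Omega^4 M^2\|U-V\|_a$. Adding the two contributions, the constant
\begin{equation*}
C_M = c_\Omega^2 + 3c_\Omega^4 M^2
\end{equation*}
works; it depends only on $M$ and the fixed domain constant $c_\Omega$.

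The main care point is the matrix--function product estimate $\|WB\|_a \leqslant \|W\|_a\|B\|_F$ and making sure norms are converted correctly between $L^2$ and $\|\cdot\|_a$. Beyond that the argument is routine bookkeeping.
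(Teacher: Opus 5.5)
Your proof is correct and follows the paper's argument essentially step for step. You use the same triangle-inequality split into $\mathcal{G}(U-V)$, $(U-V)\langle \mathcal{G}U,U\rangle$ and $V\big(\langle \mathcal{G}(U-V),U\rangle + \langle \mathcal{G}V,U-V\rangle\big)$, and the same identity $\|\mathcal{G}W\|_a^2 = (W,\mathcal{G}W)$ for the linear part. The differences are cosmetic: you express everything through $c_\Omega$ where the paper uses $1/\sqrt{\lambda_1}$, the optimal Poincar\'e constant, so your $C_M$ is slightly larger but still valid. Your count of $3c_\Omega^4M^2$ for the cubic part is also more careful than the paper's stated $C_M$, whose coefficient $2$ leaves out the contribution of the $(U-V)\langle\mathcal{G}U,U\rangle$ term.
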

	\begin{proof}
		From the explicit formulation of the gradient $\nabla_G E_{\mathcal{G}}(U) = \mathcal{G}U - U \langle \mathcal{G}U, U \rangle$. An application of the triangle inequality yields
		\begin{equation*}
			\begin{aligned}
				\left\| \nabla_G E_{\mathcal{G}}(U) - \nabla_G E_{\mathcal{G}}(V) \right\|_a 
				\leqslant & \| \mathcal{G}(U - V) \|_a + \| U \langle \mathcal{G}U, U \rangle - V \langle \mathcal{G}V, V \rangle \|_a \\
				\leqslant & \left\| \mathcal{G}\left(U-V\right)\right\|_a + \left\|(U-V)\left\langle  \mathcal{G}U, U \right\rangle\right\|_a \\
				& + \left\| V \left(\left\langle \mathcal{G}U, U \right\rangle   - \left\langle \mathcal{G}V, V \right\rangle  \right)\right\|_a.
			\end{aligned}
		\end{equation*}
		Bounding the first term relies on the boundedness of the operator $\mathcal{G}$, which gives
		\begin{equation*}
			\begin{aligned}
					\|\mathcal{G}(U - V)\|_a& = \left( \mathcal{G}(U - V), \mathcal{G}(U - V)\right)_a^{\frac{1}{2}} =  \left( U - V, \mathcal{G}(U - V)\right)^{\frac{1}{2}}  
					\\ &\leqslant \frac{1}{\sqrt{\lambda_1}} \|U - V\| \leqslant \frac{c_{\Omega}}{\sqrt{\lambda_1}} \|U - V\|_a.
			\end{aligned}
		\end{equation*}
		For the second term, invoking the trace property and the Poincaré inequality, we deduce
		\begin{equation*}
			\begin{aligned}
				\left\|(U-V)\left\langle  \mathcal{G}U, U \right\rangle\right\|_a  &= \operatorname{tr}\left(\left\langle U-V, U-V \right\rangle_a \cdot \left\langle U,\mathcal{G} U \right\rangle^2 \right)^{\frac{1}{2}} 
				\\ &\leqslant  \frac{1}{\lambda_1}\left\|U\right\|^2 \cdot \left\| U-V \right\|_a 
				\leqslant \frac{c_{\Omega}^2}{\lambda_1}\left\|U\right\|_a^2 \cdot \left\| U-V \right\|_a. 
			\end{aligned}
		\end{equation*}
		Similarly, the estimate for the third term involves decomposing the inner product difference as
		\begin{equation*}
			\begin{aligned}
				\left\| V \left(\left\langle \mathcal{G}U, U \right\rangle   - \left\langle \mathcal{G}V, V \right\rangle  \right)\right\|_a &=  \left\| V \left(\left\langle \mathcal{G}(U-V), U \right\rangle   + \left\langle \mathcal{G}V, U-V \right\rangle  \right)\right\|_a  
				\\ &\leqslant \frac{1}{\lambda_1}   \left\| V \right\|_a  \cdot \left(  \left\|U-V\right\|\|U\| +\|V\| \left\| U-V\right\| \right)
				\\ &\leqslant \frac{2c_{\Omega}^2 M^2}{\lambda_1}\left\|U-V\right\|_a.
			\end{aligned}
		\end{equation*}
		Defining the bound constant as $C_M \coloneqq  \frac{c_{\Omega}}{\sqrt{\lambda_1}} +\frac{2c_{\Omega} ^2 M^2 }{\lambda_1}$, and combining these estimates, we obtain
		\begin{equation*}
			\begin{aligned}
				\left\| \nabla_G E_{\mathcal{G}}(U)- \nabla_G E_{\mathcal{G}}(V)  \right\|_a \leqslant C_M \left\| U-V\right\|_a.
			\end{aligned}
		\end{equation*}
	\end{proof}
	
	Consequently, the classical Picard--Lindel\"of theorem guarantees the local well-posedness of the dynamical system.
	
	\begin{lemma}
		For any initial data $U_0 \in \big(H_0^1(\Omega)\big)^N$, there exists a maximal time $T > 0$ such that the problem \eqref{equ: evolution problem} admits a unique solution
		\begin{align*}
			U \in C^1([0, T); \big(H_0^1(\Omega)\big)^N).
		\end{align*}
	\end{lemma}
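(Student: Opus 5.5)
The plan is to apply the Picard--Lindel\"of theorem (Cauchy--Lipschitz) in the Banach space $X \coloneqq \big(H_0^1(\Omega)\big)^N$ equipped with the norm $\|\cdot\|_a$. We rewrite \eqref{equ: evolution problem} as the autonomous ODE $U' = F(U)$ with $F(U) \coloneqq \nabla_G E_{\mathcal{G}}(U) = \mathcal{G}U - U\langle \mathcal{G}U, U\rangle$. First we check that $F$ maps $X$ into $X$. Since $\mathcal{G}: H^{-1}(\Omega)\to H_0^1(\Omega)$ and $L^2 \subset H^{-1}$, we have $\mathcal{G}U \in X$. Moreover $\langle \mathcal{G}U, U\rangle$ is a finite real $N\times N$ matrix, and right multiplication by a constant matrix preserves $X$. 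Hence $F: X \to X$.

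Next we use Lemma \ref{lem: Local Lipschitz property}. On every closed ball $B_M \coloneqq \{U : \|U\|_a \leqslant M\}$, $F$ is Lipschitz with constant $C_M$; in particular $F$ is continuous. Fix $M \coloneqq 2\|U_0\|_a + 1$. On the ball $B(U_0, M/2) \subset B_M$, $F$ is bounded by $K \coloneqq \|F(U_0)\|_a + C_M M/2$. We write the integral formulation
\[
U(t) = U_0 + \int_0^t F(U(s))\,\mathrm{d}s .
\]
The Picard map is a contraction on $C([0,\delta]; B(U_0, M/2))$ with the sup norm, for $\delta < \min\{M/(2K),\, 1/C_M\}$. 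Banach's fixed point theorem then gives a unique continuous solution on $[0,\delta]$. Because $F\circ U$ is continuous, the integral form upgrades this to $U \in C^1([0,\delta]; X)$.

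For maximality, we use the standard continuation argument. Let $T$ be the supremum of the times $\tau$ for which a $C^1$ solution exists on $[0,\tau)$. Uniqueness on overlapping intervals follows from a Gronwall estimate using the local Lipschitz constant on a ball containing both trajectories on a compact interval. So the solutions glue together into a unique solution on $[0,T)$ with $T \geqslant \delta > 0$ (possibly $T = \infty$).

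The main obstacle is only bookkeeping: the Lipschitz constant depends on $M$, so uniqueness must be argued on compact subintervals, where both solutions stay bounded by continuity. No global bound is needed here. Global existence, i.e.\ $T = \infty$, is deferred and will presumably come from the energy decay in Theorem \ref{thm:energy decay}, which bounds $\|U(t)\|_a$ and excludes blow-up.
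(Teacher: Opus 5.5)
Your proposal is correct and follows the same route as the paper, which simply invokes the Picard--Lindel\"of theorem on the basis of the local Lipschitz estimate in Lemma \ref{lem: Local Lipschitz property}. You also supply the details the paper leaves implicit, and they check out: that $\nabla_G E_{\mathcal{G}}$ maps $\big(H_0^1(\Omega)\big)^N$ into itself, the contraction on a ball around $U_0$, Gronwall-based uniqueness on compact subintervals, and gluing to a maximal interval.
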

	
	This local result confirms the existence of a unique, continuously differentiable trajectory defined on a maximal time interval. To extend local existence to global well-posedness, we leverage the energy dissipation bounds to systematically extend the temporal domain to infinity. This argument aligns in spirit with the methodologies presented in \cite[Theorem 3.2]{henning2020sobolev}, leading to the following global existence theorem.
	
	\begin{theorem}\label{thm:global well-posedness}
		If the initial data $U_0 \in \mathcal{M}_{\geqslant}^N$, then \eqref{equ: evolution problem} admits a unique global solution
		\begin{align*}
			U \in C^1([0, \infty); \big(H_0^1(\Omega)\big)^N).
		\end{align*}
	\end{theorem}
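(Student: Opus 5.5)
We argue by the standard continuation principle. By the local existence lemma there is a maximal time $T>0$ and a unique solution $U\in C^1([0,T);(H_0^1(\Omega))^N)$. The right-hand side $\nabla_G E_{\mathcal{G}}$ is locally Lipschitz on $(H_0^1(\Omega))^N$ by Lemma \ref{lem: Local Lipschitz property}. Hence, if $T<\infty$, the solution must blow up in the sense that $\limsup_{t\to T^-}\|U(t)\|_a=\infty$. Otherwise $U$ stays in a ball of radius $M$, where the Lipschitz constant $C_M$ is uniform. The local existence time obtained from Picard--Lindel\"of then depends only on $M$. Restarting the flow from $U(t_0)$ with $t_0$ close enough to $T$ would extend the solution past $T$, contradicting maximality. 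So the whole proof reduces to an a priori bound on $\|U(t)\|_a$ on $[0,T)$ that does not depend on $T$.

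For this bound we use the two structural results already established. Since $U_0\in\mathcal{M}_{\geqslant}^N$, Theorem \ref{thm:U in quasi-Steifel} gives $U(t)\in\mathcal{M}_{\geqslant}^N$ for all $t\in[0,T)$. This is exactly the hypothesis of Theorem \ref{thm:energy decay}, which then shows that $E(U(t))$ is non-increasing. Integrating in time,
\begin{equation*}
\|U(t)\|_a^2 = 2E(U(t)) \leqslant 2E(U_0) = \|U_0\|_a^2, \qquad \forall\, t\in[0,T),
\end{equation*}
so $U(t)$ stays in the ball of radius $M:=\|U_0\|_a$ in $(H_0^1(\Omega))^N$. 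Note that $M$ is independent of $T$.

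To make the continuation step concrete, we apply Picard--Lindel\"of on the closed ball $B_{2M}$ with Lipschitz constant $C_{2M}$. On $B_{2M}$ the vector field is bounded by $K:=\|\nabla_G E_{\mathcal{G}}(0)\|_a+2MC_{2M}=2MC_{2M}$, since $\nabla_G E_{\mathcal{G}}(0)=0$. Every initial datum in $B_M$ therefore generates a solution on an interval of length $\delta=\min\{M/K,\,1/(2C_{2M})\}>0$, and $\delta$ does not depend on the starting point. Suppose $T<\infty$. Take $t_0=T-\delta/2$ and solve from $U(t_0)\in B_M$. By uniqueness this solution coincides with $U$ on $[t_0,T)$ and extends it to $[0,T+\delta/2)$, a contradiction. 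Hence $T=\infty$. Uniqueness on $[0,\infty)$ follows from uniqueness on every compact subinterval, which is again a consequence of the local Lipschitz property.

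The main obstacle is not the continuation argument, which is routine, but the a priori bound. That bound relies on applying Theorem \ref{thm:U in quasi-Steifel} and Theorem \ref{thm:energy decay} on the maximal interval $[0,T)$, which both theorems allow because they are stated for any interval on which the solution exists. A minor point is the degenerate case $U_0=0$. It cannot occur, since $\langle U_0,U_0\rangle\geqslant I_N$ forces $\|U_0\|^2\geqslant N$, so $M>0$ and, by the Poincaré inequality, $\|U_0\|_a\geqslant \sqrt{N}/c_\Omega$.
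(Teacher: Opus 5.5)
Your proof is correct, but it takes a different route from the paper. You use energy decay only qualitatively, to get the $T$-independent bound $\|U(t)\|_a\leqslant\|U_0\|_a$. You then close with the standard continuation (blow-up alternative) argument, made explicit through a uniform Picard--Lindel\"of existence time on $B_{2M}$.

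The paper instead assumes $T<\infty$ and shows directly that $U(t)$ has a strong limit as $t\to T$. It combines \eqref{eq:energy time derivative} with $\langle U(t),\mathcal{G}U(t)\rangle^{-1}\geqslant\lambda_1/\lambda_{\max}(\langle U_0,U_0\rangle)$ to obtain $\|\frac{\mathrm{d}U}{\mathrm{d}t}\|_a^2\leqslant-\frac{\lambda_{\max}(\langle U_0,U_0\rangle)}{\lambda_1}\frac{\mathrm{d}}{\mathrm{d}t}E(U(t))$. Integrating gives $\|U(t_2)-U(t_1)\|_a^2\leqslant(t_2-t_1)\frac{\lambda_{\max}(\langle U_0,U_0\rangle)}{\lambda_1}\big(E(U(t_1))-E(U(t_2))\big)$. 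The paper then extracts a limit $U_T$ via weak compactness and convergence of norms, even though this estimate is already a Cauchy criterion, and restarts the local theory from $U_T$.

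Your version is more elementary. It needs only boundedness of $E$ along the flow plus the local Lipschitz property of $\nabla_G E_{\mathcal{G}}$. It also avoids the spectral bound $\lambda_{\max}(\langle U(t),U(t)\rangle)\leqslant\lambda_{\max}(\langle U_0,U_0\rangle)$ for the continuous flow, which the paper's argument needs but only recalls. The paper's version, in exchange, yields the integrated dissipation estimate for $\int\|\frac{\mathrm{d}U}{\mathrm{d}t}\|_a^2\,\mathrm{d}t$, which is reused in the proof of Theorem \ref{thm: convergence}.

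One cosmetic point: if $T<\delta/2$, your $t_0=T-\delta/2$ is negative. In that case, restarting from $t_0=0$ already gives the contradiction.
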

	\begin{proof}
		We proceed by contradiction. Assume that the maximal existence time $T$ is finite, implying that the problem is not well-posed for $t \geqslant T$. The non-increasing nature of the energy functional ensures the existence of the limit $E_T \coloneqq \lim_{t \to T} E(U(t))$. For any $t_1, t_2 \in [0,T)$ such that $t_1 \leqslant t_2$, an application of the Cauchy--Schwarz inequality yields  
		\begin{align*}
			\| U(t_2) - U(t_1)\|_a^2 & = \left\| \int_{t_1}^{t_2}  \frac{\mathrm{d}  U(t)  }{\mathrm{d}  t} \ \text{d}t\right\|_a^2 \leqslant \left( \int_{t_1}^{t_2} \left\| \frac{\mathrm{d}  U(t)  }{\mathrm{d}  t}\right\|_a \ \text{d}t \right)^2 \\
			& \leqslant (t_2 - t_1) \int_{t_1}^{t_2} \left\| \frac{\mathrm{d}  U(t)  }{\mathrm{d}  t}\right\|_a^2 \ \text{d}t. 
		\end{align*}
		Recalling the spectral bounds of the inner product matrix, we have
		\begin{equation*}
			\begin{aligned}
				\left\langle U(t), \mathcal{G}U(t) \right\rangle^{-1} \geqslant \frac{\lambda_1}{\lambda_{\max}\left(\left\langle U(t), U(t) \right\rangle\right)} \geqslant \frac{\lambda_1}{\lambda_{\max}\left(\left\langle U_0, U_0 \right\rangle\right)}.
			\end{aligned}
		\end{equation*}
		Incorporating the energy dissipation estimate from \eqref{eq:energy time derivative} of Theorem \ref{thm:energy decay}, we find
		\begin{equation*}
			\begin{aligned}
				\frac{\mathrm{d}   E(U(t)) }{\mathrm{d}  t}    \leqslant &- \left(\nabla_G E_{\mathcal{G}}(U(t))  \langle \mathcal{G} U(t),  U(t) \rangle^{-1}, \nabla_G E_{\mathcal{G}}(U(t)) \right)_a 
				\\ \leqslant &-\frac{\lambda_1}{\lambda_{\max}\left(\left\langle U_0, U_0 \right\rangle\right)}
				\left(\frac{\mathrm{d}  U(t)  }{\mathrm{d}  t} , \frac{\mathrm{d}  U(t)  }{\mathrm{d}  t}\right)_a.
			\end{aligned}
		\end{equation*}
		Integrating this differential inequality over $[t_1, t_2]$, we deduce
		\begin{equation*}
			\begin{aligned}
				\|U(t_2)-U(t_1)\|_a^2	& \leqslant  (t_2 - t_1) \frac{\lambda_{\max}\left(\left\langle U_0, U_0 \right\rangle\right)}{\lambda_1} \int_{t_1}^{t_2} \left(- 	\frac{\mathrm{d}   E(U(t)) }{\mathrm{d}  t}    \right)\text{d}t  
				\\& =   (t_2 - t_1) \frac{\lambda_{\max}\left(\left\langle U_0, U_0 \right\rangle\right)}{\lambda_1} \big(E(U(t_1)) - E(U(t_2))\big).
			\end{aligned}
		\end{equation*}
		Consequently, there exists a sequence $\{t_n\}$ approaching $T$ and a limit function $U_T \in \big(H_0^1(\Omega)\big)^N$ such that $U(t_n) \rightharpoonup U_T$ weakly in $\big(H_0^1(\Omega)\big)^N$. Exploiting the weak lower semicontinuity of the norm and the temporal estimate derived above, we obtain
		\begin{align*}
			\|U(t_n)\|_a & \leqslant \|U_T\|_a + \| U(t_n) - U_T\|_a  \leqslant \|U_T\|_a + \liminf_{m \to \infty} \| U(t_n) - U(t_m)\|_a \\
			& \leqslant \|U_T\|_a +  \liminf_{m \to \infty} \sqrt{(t_m - t_n) \big(E(U(t_n)) - E(U(t_m))\big) } \\
			& \leqslant  \|U_T\|_a +  \sqrt{(T  - t_n) \big(E(U(t_n)) - E_T\big) }.
		\end{align*}
		Taking the limit supremum as $n \to \infty$ yields
		\begin{align*}
			\limsup_{n \to \infty} \|U(t_n)\|_a \leqslant \|U_T\|_a.
		\end{align*}
		Combining this upper bound with the standard weak lower semicontinuity inequality $\|U_T\|_a \leqslant \liminf_{n \to \infty} \|U(t_n)\|_a$, we conclude that $\lim_{n \to \infty } \|U(t_n)\|_a = \|U_T\|_a$, which establishes the strong convergence $U(t_n) \to U_T$ in $\big(H_0^1(\Omega)\big)^N$. 
		The inherent temporal continuity of $U(t)$ ensures that this limit is independent of the choice of the sequence $\{t_n\}$. Thus, $U(t) \to U(T) \coloneqq U_T$ strongly in $\big(H_0^1(\Omega)\big)^N$ as $t \to T$. Given the assumption $T < \infty$, the state $U_T$ serves as valid initial data, permitting the extension of the solution onto an interval $[0, T + \delta)$ for some $\delta >0$. This directly contradicts the presumed maximality of $T$, thereby completing the proof that it is established that the evolution problem admits a unique solution valid for all times.
	\end{proof}
	
	This theorem confirms the global existence of the continuous model, thereby validating the mathematical feasibility of conducting rigorous asymptotic analysis on the trajectory as $t \to \infty$.

 \subsection{Asymptotic behavior}
 
 Having established the global well-posedness of the proposed continuous model, we now investigate the long-time dynamic behavior of the trajectory $U(t)$ governed by the evolution problem \eqref{equ: evolution problem}. 
 
 \subsubsection{Asymptotic orthogonality}
 
 While the strict preservation of orthogonality holds for orthogonal initial data, the system intrinsically maintains a quasi-orthogonal structure for general initial data. Furthermore, as we shall demonstrate, the trajectory \(U(t)\) converges exponentially to a strictly orthogonal state without requiring explicit orthogonalization procedures.
 
 To quantify this asymptotic behavior, we first bound the spectrum of the matrix $\langle U, \mathcal{G}U \rangle$.
 
 \begin{lemma}\label{lem:bound of UGU}
 For any $U \in \mathcal{M}_{\geqslant}^{N}$, the eigenvalues of the matrix $\langle U, \mathcal{G}U \rangle$ satisfy
 	\begin{equation*}
 		\frac{1}{2E(U)} \leqslant \lambda\left(\langle U, \mathcal{G}U \rangle\right) \leqslant \frac{\lambda_{\max}(\langle U, U \rangle)}{\lambda_1}.
 	\end{equation*}
 \end{lemma}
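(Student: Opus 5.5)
Both bounds come from the Rayleigh quotient of $\langle U,\mathcal{G}U\rangle$. Fix a unit vector $x\in\mathbb{R}^N$ and set $w = Ux\in H_0^1(\Omega)$. Then
\[
x^\top\langle U,\mathcal{G}U\rangle x = (w,\mathcal{G}w).
\]
The matrix $\langle U,\mathcal{G}U\rangle$ is symmetric because $(\mathcal{G}u,v)=(\mathcal{G}v,u)$. It therefore suffices to bound $(w,\mathcal{G}w)$ from above and below, uniformly over unit vectors $x$.

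\emph{Upper bound.} The smallest eigenvalue of $-\Delta+\mathcal{V}$ is $\lambda_1$, so the spectral theorem for the compact self-adjoint operator $\mathcal{G}$ gives $(w,\mathcal{G}w)\leqslant \lambda_1^{-1}\|w\|^2$. Moreover,
\[
\|w\|^2 = x^\top\langle U,U\rangle x\leqslant \lambda_{\max}(\langle U,U\rangle).
\]
Combining the two, $\lambda_{\max}(\langle U,\mathcal{G}U\rangle)\leqslant \lambda_{\max}(\langle U,U\rangle)/\lambda_1$. This is the same estimate already used in the proof of Theorem \ref{thm:global well-posedness}.

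\emph{Lower bound.} Here the key step is a Cauchy--Schwarz inequality in the $a$-inner product, using the identity $(w,\mathcal{G}w)=(\mathcal{G}w,\mathcal{G}w)_a$ and $(\mathcal{G}w,w)_a=(w,w)$:
\[
\|w\|^2 = (\mathcal{G}w, w)_a \leqslant \|\mathcal{G}w\|_a\,\|w\|_a = (w,\mathcal{G}w)^{1/2}\|w\|_a .
\]
Hence $(w,\mathcal{G}w)\geqslant \|w\|^4/\|w\|_a^2$. Two further estimates are needed:
\begin{itemize}
\item The quasi-Stiefel condition $\langle U,U\rangle\geqslant I_N$ gives $\|w\|^2=x^\top\langle U,U\rangle x\geqslant 1$.
\item Since $\langle U,U\rangle_a$ is positive semidefinite, $\|w\|_a^2 = x^\top\langle U,U\rangle_a x\leqslant \lambda_{\max}(\langle U,U\rangle_a)\leqslant \operatorname{tr}\langle U,U\rangle_a = 2E(U)$.
\end{itemize}
Together these yield $(w,\mathcal{G}w)\geqslant 1/(2E(U))$. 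Taking the minimum over unit $x$ gives $\lambda_{\min}(\langle U,\mathcal{G}U\rangle)\geqslant 1/(2E(U))$.

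The only delicate point is the lower bound: one must pair the Cauchy--Schwarz step with the inner product relations for $\mathcal{G}$ recalled earlier in the paper. One must also bound the single-direction quantity $\|w\|_a^2$ by the full trace $2E(U)$, which is where positive semidefiniteness of $\langle U,U\rangle_a$ is used. Everything else is routine Rayleigh-quotient manipulation.
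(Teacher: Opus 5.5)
Your proposal is correct and follows essentially the same argument as the paper. Both proofs reduce to the Rayleigh quotient with $w=Ux$ for unit $x$, and both get the lower bound from the Cauchy--Schwarz step $\|w\|^2=(\mathcal{G}w,w)_a\leqslant\|w\|_a\|\mathcal{G}w\|_a$ combined with $\|\mathcal{G}w\|_a^2=(w,\mathcal{G}w)$, $\|w\|^2\geqslant 1$, and $\|Ux\|_a\leqslant\|U\|_a=\sqrt{2E(U)}$ (a step the paper uses without comment and you justify via positive semidefiniteness of $\langle U,U\rangle_a$). You also spell out the upper bound $(w,\mathcal{G}w)\leqslant\lambda_1^{-1}\|w\|^2$, which the paper only attributes to the boundedness of $\mathcal{G}$.
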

 \begin{proof}
 	We focus on the lower bound, as the upper bound follows directly from the boundedness of $\mathcal{G}$. Let $U \in \mathcal{M}_{\geqslant}^{N}$. For any unit vector $x \in \mathbb{R}^N$ (i.e., $x^\top x = 1$), an application of the Cauchy--Schwarz inequality alongside the energy definition yields
 	\begin{equation*}
 		\begin{aligned}
 			1 = x^\top x &\leqslant \|Ux\|^2 = (Ux, Ux) = (Ux, \mathcal{G}Ux)_a 
 			\\
 			&\leqslant \|Ux\|_a \|\mathcal{G}Ux\|_a 
 			\leqslant \|U\|_a \|\mathcal{G}Ux\|_a 
 			\\
 			&= \sqrt{2E(U)} \sqrt{(\mathcal{G}Ux, \mathcal{G}Ux)_a} 
 			\\
 			&= \sqrt{2E(U)} \sqrt{(Ux, \mathcal{G}Ux)}.
 		\end{aligned}
 	\end{equation*}
 	Squaring both sides implies $(Ux, \mathcal{G}Ux) \geqslant \frac{1}{2E(U)}$. Consequently, the smallest eigenvalue satisfies
 	\begin{equation*}
 		\lambda_{\min}(\langle U, \mathcal{G}U \rangle) = \min_{x \in \mathbb{R}^N, \|x\|=1} (Ux, \mathcal{G}Ux) \geqslant \frac{1}{2E(U)}.
 	\end{equation*}
 \end{proof}

 Leveraging this spectral property, we rigorously establish the exponential rate at which the solution trajectory collapses onto the Stiefel manifold.
 
 \begin{theorem}\label{thm: asymptotic orthogonality}
 	Let $U(t)$ be the solution of \eqref{equ: evolution problem} with initial data $U_0 \in \mathcal{M}_{\geqslant}^{N}$, then
 	\begin{equation}\label{Quasi-orthogonality}
 		\left\|\langle U(t), U(t) \rangle - I_N\right\| \leqslant \left\|\langle U_0, U_0 \rangle - I_N\right\| \exp \left(- \frac{1}{E(U_0)}t\right), \quad \forall\, t \geqslant 0.
 	\end{equation}
 	Consequently, there holds
 	\begin{equation*}
 		\lim_{t \to \infty} \langle U(t), U(t) \rangle = I_N.
 	\end{equation*}
 \end{theorem}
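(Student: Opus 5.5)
Set $D(t) \coloneqq \langle U(t), U(t)\rangle - I_N$ and $A(t) \coloneqq \langle \mathcal{G}U(t), U(t)\rangle$. Both are symmetric, and $A(t)$ is positive definite. The plan is to derive a linear matrix ODE for $D$ and then use Lemma \ref{lem:bound of UGU} to get a uniform lower bound on the damping.

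\textbf{The ODE for $D$.} Differentiating and inserting \eqref{equ: evolution problem} gives
\begin{equation*}
\frac{\mathrm{d}}{\mathrm{d}t}\langle U,U\rangle = \langle \dot U, U\rangle + \langle U,\dot U\rangle = 2A - A\langle U,U\rangle - \langle U,U\rangle A .
\end{equation*}
Here we use that $\langle U\langle \mathcal{G}U,U\rangle, U\rangle = A\langle U,U\rangle$ up to transposition, together with the symmetry of $\langle \mathcal{G}U,U\rangle$. Substituting $\langle U,U\rangle = I_N + D$ turns this into the Lyapunov-type equation
\begin{equation*}
\dot D = -(AD + DA).
\end{equation*}

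\textbf{Energy estimate in the Frobenius norm.} I take $\|\cdot\|$ to be the Frobenius norm, which is the natural choice given the trace inner product. Then
\begin{equation*}
\frac{\mathrm{d}}{\mathrm{d}t}\|D\|^2 = 2\operatorname{tr}(D\dot D) = -4\operatorname{tr}(D A D).
\end{equation*}
Since $DAD \geqslant \lambda_{\min}(A)\, D^2$, we get $\operatorname{tr}(DAD) \geqslant \lambda_{\min}(A)\|D\|^2$. By Theorem \ref{thm:U in quasi-Steifel}, $U(t) \in \mathcal{M}_{\geqslant}^N$ for all $t$, which is exactly what is needed to apply Lemma \ref{lem:bound of UGU}. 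Combined with the monotonicity from Theorem \ref{thm:energy decay}, this yields
\begin{equation*}
\lambda_{\min}(A(t)) \geqslant \frac{1}{2E(U(t))} \geqslant \frac{1}{2E(U_0)}.
\end{equation*}
Hence
\begin{equation*}
\frac{\mathrm{d}}{\mathrm{d}t}\|D\|^2 \leqslant -\frac{2}{E(U_0)}\|D\|^2 .
\end{equation*}
Grönwall's inequality then gives $\|D(t)\|^2 \leqslant \|D(0)\|^2 e^{-2t/E(U_0)}$, and taking square roots gives the stated rate $\exp(-t/E(U_0))$. The limit $\langle U(t),U(t)\rangle \to I_N$ follows immediately.

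\textbf{Main obstacle.} The only delicate step is the derivative computation: the transpose conventions in $\langle U M, V\rangle$ must be handled carefully so that the equation comes out in the symmetric form $-(AD+DA)$. The lower bound $\operatorname{tr}(DAD) \geqslant \lambda_{\min}(A)\operatorname{tr}(D^2)$ also needs a word of justification: it follows from writing $\operatorname{tr}(DAD) = \sum_i d_i^\top A d_i$ over the columns $d_i$ of $D$.
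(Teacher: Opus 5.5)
Your proposal is correct and follows essentially the same route as the paper: differentiate the Frobenius norm of $D=\langle U,U\rangle - I_N$ to obtain $-4\operatorname{tr}(DAD)$, then bound $\lambda_{\min}(\langle U,\mathcal{G}U\rangle)$ from below by $1/(2E(U_0))$ using Lemma~\ref{lem:bound of UGU} (applicable by quasi-orthogonality) together with energy decay, and conclude with Gr\"onwall. The only cosmetic difference is that you first isolate the Lyapunov-type equation $\dot D = -(AD+DA)$, whereas the paper carries out this step within a single trace computation.
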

 \begin{proof}
 	Differentiating the deviation from orthogonality with respect to $t$ and substituting the equation $\frac{\mathrm{d}U}{\mathrm{d}t} = \mathcal{G}U - U \langle \mathcal{G}U, U \rangle$ into the resulting expression yields
 	\begin{equation}\label{eq:orthogonality time derivative}
 		\begin{aligned}
 			\frac{\mathrm{d}    }{\mathrm{d}  t} \left\|\left\langle U(t), U(t) \right\rangle  - I_N\right\|^2=& 2 \operatorname{tr}\left[\left(\left\langle U(t),U(t) \right\rangle  - I_N\right) \cdot \left(\left\langle \frac{\mathrm{d}  U(t)  }{\mathrm{d}  t} , U(t) \right\rangle +\left\langle U(t), \frac{\mathrm{d} U(t)    }{\mathrm{d}  t}  \right\rangle  \right)\right]
 			\\=& -4 \operatorname{tr}\left[\left(\left\langle U(t),U(t) \right\rangle  - I_N\right) \cdot \left\langle U(t), \mathcal{G}U(t) \right\rangle  \cdot \left(\left\langle U(t),U(t) \right\rangle  - I_N\right)  \right].
 		\end{aligned}
 	\end{equation}
 	Invoking the spectral bound from Lemma \ref{lem:bound of UGU}, we obtain
 	\begin{equation*}
 		\lambda_{\min}(\langle U(t), \mathcal{G}U(t) \rangle) \geqslant \frac{1}{2E(U(t))}.
 	\end{equation*}
 	Moreover, the energy dissipation property established in Theorem \ref{thm:energy decay} ensures $E(U(t)) \leqslant E(U_0)$, which further implies
 	\begin{equation*}
 		\frac{1}{2E(U(t))} \geqslant \frac{1}{2E(U_0)}.
 	\end{equation*}
 	Combining these estimates, we arrive at the following differential inequality
 	\begin{equation*}
 		\frac{\mathrm{d}    }{\mathrm{d}  t} \left\|\left\langle U(t), U(t) \right\rangle  - I_N\right\|^2
 		\leqslant - \frac{2}{E(U_0)}  \left\|\left\langle U(t), U(t) \right\rangle  - I_N\right\|^2.
 	\end{equation*}
 	A direct application of Grönwall's inequality yields
 	\begin{equation*}
 		\left\|\left\langle U(t), U(t) \right\rangle - I_N\right\|^2\leqslant \left\|\left\langle U_0, U_0 \right\rangle -I_N\right\|^2  \exp\left(-  \frac{2}{E(U_0)}t\right), \quad \forall t \geqslant 0.
 	\end{equation*}
 	Taking the square root of both sides completes the proof.
 \end{proof}
 
 This theorem validates that explicit orthogonalization steps can be entirely bypassed, as the continuous dynamics inherently drive the system toward orthogonality.
 
\subsubsection{Convergence}

Building upon the established asymptotic orthogonality, we now proceed to establish the strong convergence of the dynamical solution $U(t)$ toward the target eigenfunctions. 

The uniqueness of the minimizer $[V_{*}] \in \mathcal{G}^N$ associated with \eqref{equ: minimization problem in Grassman} naturally extends to the quasi-Grassmann set 
\begin{equation*}
	\mathcal{G}_{\geqslant}^N \coloneqq   \left\{[U]: U \in \mathcal{M}_{\geqslant}^N\right\}.
\end{equation*} 
To formalize the convergence analysis, we introduce the restricted set
\begin{equation*} 
	\mathcal{S} \coloneqq \left\{ U \in \big(H_0^1(\Omega)\big)^N : [U] \in \mathcal{G}_{\geqslant}^{N} \cap \mathcal{L}_{E_1} \right\}, 
\end{equation*}
where $\mathcal{L}_{E_1}$ denotes the energy sublevel set given by
\begin{equation*}
	\mathcal{L}_{E_1} \coloneqq \left\{ [U] \in \big(H_0^1(\Omega)\big)^N: E(U) \leqslant E_1 \coloneqq \frac{E(V_{*}) + E_{\text{ES}}}{2} \right\},
\end{equation*}
and $E_{\text{ES}} \coloneqq \frac{1}{2}\left(\sum_{i=1}^{N-1}\lambda_i + \lambda_{N+1}\right)$ represents the energy associated with the first excited state. Within the domain $\mathcal{S}$, $[V_{*}]$ constitutes the unique critical point of the energy functional.

Providing the theoretical foundation for global convergence, the following theorem establishes the strong, \emph{orbital-wise} convergence of the continuous trajectory. This property guarantees that each column of $U(t)$ converges independently to its corresponding eigenfunction.

\begin{theorem}\label{thm: convergence}
	Let $U(t)$ be the solution of \eqref{equ: evolution problem} with initial data $U_0 \in \mathcal{S}$. Then, there exists an orthogonal matrix $Q_* \in \mathcal{O}^N$, independent of $t$, such that
	\begin{equation*}
		\lim_{t \to \infty} \|U(t) - V_* Q_*\|_a = 0.
	\end{equation*}
	Moreover, the energy and the gradient exhibit the asymptotic limits
	\begin{equation*}
		\begin{aligned}
			&\lim_{t \to \infty} E(U(t)) = E(V_{*}),
			\\
			&\lim_{t \to \infty} \|\nabla_G E_{\mathcal{G}}(U(t))\|_a = 0.
		\end{aligned}
	\end{equation*}
\end{theorem}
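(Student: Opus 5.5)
The argument has three stages. First, the energy dissipation inequality gives integrability of the gradient. Second, compactness and the critical-point characterization on $\mathcal{S}$ identify the limit set. Third, the explicit solution formula of Theorem \ref{expression of solution} rules out a drifting rotation.

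\textbf{Step 1 (integrability and bounds).} By Theorem \ref{thm:energy decay} and Theorem \ref{thm:U in quasi-Steifel}, $U(t)$ remains in $\mathcal{M}_{\geqslant}^N$ and $E(U(t))\leqslant E(U_0)\leqslant E_1$, so $U(t)\in\mathcal{S}$ for all $t$. Moreover $\|U(t)\|_a^2\leqslant 2E(U_0)$. By Theorem \ref{thm: asymptotic orthogonality}, $\lambda_{\max}\langle U(t),U(t)\rangle$ is bounded by $\lambda_{\max}\langle U_0,U_0\rangle$. Lemma \ref{lem:bound of UGU} then gives $\langle \mathcal{G}U,U\rangle^{-1}\geqslant \lambda_1/\lambda_{\max}(\langle U_0,U_0\rangle)$. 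Integrating \eqref{eq:energy time derivative} over $[0,\infty)$ yields
\[
\int_0^\infty \|\nabla_G E_{\mathcal{G}}(U(t))\|_a^2\,\mathrm{d}t<\infty,
\qquad
\int_0^\infty \operatorname{tr}\bigl(\langle U,U\rangle-I_N\bigr)\,\mathrm{d}t<\infty .
\]
By Lemma \ref{lem: Local Lipschitz property} on the ball of radius $\sqrt{2E(U_0)}$, the map $t\mapsto\nabla_GE_{\mathcal{G}}(U(t))$ is uniformly Lipschitz, since $\|\dot U\|_a$ is bounded. A square-integrable, uniformly continuous function tends to zero (Barbalat's argument), so $\|\nabla_G E_{\mathcal{G}}(U(t))\|_a\to 0$.

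\textbf{Step 2 (identifying the limit of the energy and of the subspace).} Take any $t_n\to\infty$. Since $U(t_n)$ is bounded in $(H_0^1)^N$, a subsequence converges weakly in $H_0^1$ and strongly in $L^2$ by Rellich, say to $\bar U$. Compactness of $\mathcal{G}$ gives $\mathcal{G}U(t_n)\to\mathcal{G}\bar U$ in $H_0^1$ and $\langle \mathcal{G}U(t_n),U(t_n)\rangle\to\langle\mathcal{G}\bar U,\bar U\rangle$. Since $\nabla_G E_{\mathcal{G}}(U(t_n))\to 0$ in $H_0^1$, the identity $U(t_n)\langle\mathcal{G}U,U\rangle=\mathcal{G}U(t_n)-\nabla_GE_{\mathcal{G}}(U(t_n))$ converges strongly in $H_0^1$. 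Because the matrix $\langle\mathcal{G}U,U\rangle$ is uniformly invertible by Lemma \ref{lem:bound of UGU}, the convergence $U(t_n)\to\bar U$ is in fact strong in $H_0^1$.

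The limit satisfies $\mathcal{G}\bar U=\bar U\langle\mathcal{G}\bar U,\bar U\rangle$ and, by Theorem \ref{thm: asymptotic orthogonality}, $\langle\bar U,\bar U\rangle=I_N$. Hence $\bar U$ spans an $N$-dimensional invariant subspace of $-\Delta+\mathcal{V}$ with orthonormal columns. We also have $E(\bar U)\leqslant E_1<E_{\text{ES}}$. Any invariant subspace other than the lowest one has energy at least $E_{\text{ES}}$, so $[\bar U]=[V_*]$ and $E(\bar U)=E(V_*)$. Since the subsequence was arbitrary and $E$ is monotone, $E(U(t))\to E(V_*)$ and $\|[U(t)]-[V_*]\|_a\to 0$.

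\textbf{Step 3 (fixing the rotation; main obstacle).} Convergence in the Grassmann sense does not yet exclude a drifting $Q(t)$. Here I would use the explicit formula \eqref{eq:expression of solution}, or equivalently the fact that \eqref{equ: evolution problem} contains no rotational component: the generator $U\mapsto U\langle\mathcal{G}U,U\rangle$ is symmetric. I would write $U(t)=V_*C(t)+W(t)$, with $W(t)$ orthogonal to $V_*$ and $W\to 0$. Projecting the equation onto $V_*$ gives
\[
\dot C=\Lambda^{-1}C-C\langle\mathcal{G}U,U\rangle,
\]
and $\langle\mathcal{G}U,U\rangle=C^\top\Lambda^{-1}C+O(\|W\|^2)$. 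Thus $\dot C=\Lambda^{-1}C-CC^\top\Lambda^{-1}C+O(\|W\|^2)$, and the error term decays exponentially once Step 2 is combined with the spectral gap. For the leading part, $P=CC^\top$ tends to $I_N$, and $C^{-1}\dot C=C^{-1}\Lambda^{-1}C-C^\top\Lambda^{-1}C$ is integrable. The key claim is that this integrability, which follows from $\langle U,U\rangle\to I_N$ exponentially and the gradient decay, yields a Cauchy estimate $\int_t^\infty\|\dot U\|_a\,\mathrm{d}s\to 0$. Once that holds, $U(t)$ converges to some $V_*Q_*$.

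Obtaining this $L^1$ (rather than $L^2$) bound on $\|\dot U\|_a$ is the hard part. I would first try to show exponential decay of $\|\nabla_GE_{\mathcal{G}}\|_a$ via a Łojasiewicz-type inequality near $[V_*]$, using the gap $\lambda_N<\lambda_{N+1}$. The alternative is to read off the convergence of $Q(t)$ directly from the explicit representation.
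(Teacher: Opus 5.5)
\textbf{Steps 1 and 2.} These are correct, and they differ from the paper in useful ways. The paper only extracts $\liminf_{t\to\infty}\|\dot U\|_a=0$. It then upgrades weak to strong $H_0^1$ convergence along that single sequence by a Gram-matrix trace argument, and identifies the limit through uniqueness of critical points in $\mathcal S$. In its contradiction argument over an arbitrary sequence $t_m$, it reuses the critical-point identification. That step tacitly needs $\|\nabla_G E_{\mathcal G}(U(t))\|_a\to0$ along the whole trajectory, which is exactly what your Barbalat argument supplies. Your strong-convergence step (dividing by $\langle\mathcal GU,U\rangle$, uniformly invertible by Lemma \ref{lem:bound of UGU}) is clean and complete. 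So is your identification of $[\bar U]$ via $\langle\bar U,\bar U\rangle=I_N$ and $E(\bar U)\leqslant E_1<E_{\text{ES}}$.

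\textbf{The gap is Step 3.} You never prove that a $t$-independent $Q_*$ exists, and that is the main assertion of the theorem. The paper does not close this either. After obtaining $\|[U(t)]-[V_*]\|_a\to0$, it simply asserts $U(t)\to V_*Q_*$ ``guided by'' the subsequential limit $\bar U=V_*Q_*$. That inference does not follow: Grassmann convergence plus one subsequential limit cannot exclude a drifting representative. Your concern is therefore well founded.

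Your own decomposition already closes the gap. You need no \L{}ojasiewicz inequality, no explicit formula, and no $L^1$ bound on $\|\dot U\|_a$. Set $C=\langle V_*,U\rangle$ and $W=U-V_*C$. Invariance of $\mathrm{span}\,V_*$ under $\mathcal G$ gives exactly
\[
\dot C=(I_N-CC^\top)\Lambda^{-1}C-C\langle\mathcal GW,W\rangle,\qquad \dot W=\mathcal GW-W\langle\mathcal GU,U\rangle,\qquad \langle U,U\rangle=C^\top C+\langle W,W\rangle .
\]
\begin{itemize}
\item By Step 2, the spectrum of $\langle\mathcal GU,U\rangle$ tends to $\{1/\lambda_i\}$. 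Also $(w,\mathcal Gw)\leqslant\|w\|^2/\lambda_{N+1}$ for $w\perp V_*$. Hence $\frac{\mathrm d}{\mathrm dt}\|W\|^2\leqslant-2\bigl(\frac1{\lambda_N}-\frac1{\lambda_{N+1}}-\varepsilon\bigr)\|W\|^2$ for large $t$, so $\|W\|$ decays exponentially.
\item In the Frobenius norm, $\|I_N-CC^\top\|=\|I_N-C^\top C\|\leqslant\|\langle U,U\rangle-I_N\|+\|W\|^2$. This is exponentially small by Theorem \ref{thm: asymptotic orthogonality}.
\item Therefore $\dot C$ is integrable, and $C(t)\to C_\infty$ with $C_\infty^\top C_\infty=I_N$.
\item The spectral projection onto $\mathrm{span}\,V_*$ is also $a$-orthogonal, so $\|W\|_a\leqslant\|[U]-[V_*]\|_a\to0$.
\end{itemize}
Hence $U(t)=V_*C(t)+W(t)\to V_*C_\infty=:V_*Q_*$ in $\bigl(H_0^1(\Omega)\bigr)^N$. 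This is the concrete form of the fact that the equilibrium set $\{V_*Q\}$ is normally attracting, and it gives an exponential rate as a by-product.
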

\begin{proof}
	We recall the relation derived in the proof of Theorem \ref{thm:global well-posedness}, which reads
	\begin{equation*}
		\left\| \frac{\mathrm{d}U(t)}{\mathrm{d}t} \right\|_a^2 \leqslant - \frac{\lambda_{\max}(\langle U_0, U_0 \rangle)}{\lambda_1} \frac{\mathrm{d}}{\mathrm{d}t} E(U(t)).
	\end{equation*}
	Integrating the above inequality over $[0, \infty)$ and utilizing the uniform lower bound $E(V_*)$, we deduce
	\begin{equation*}
		\int_{0}^{\infty} \left\| \frac{\mathrm{d}U(t)}{\mathrm{d}t} \right\|_a^2 \mathrm{d}t \leqslant \frac{\lambda_{\max}(\langle U_0, U_0 \rangle)}{\lambda_1} (E(U_0) - E(V_*)) < \infty,
	\end{equation*}
which implies
	\begin{equation*} 
		\liminf_{t \to \infty} \left\| \frac{\mathrm{d}U(t)}{\mathrm{d}t} \right\|_a = 0. 
	\end{equation*}
	Consequently,
	\begin{equation}\label{eq: gradient sequence limit}
		\lim_{n \to \infty} \left\| \nabla_G E_{\mathcal{G}}(U(t_n)) \right\|_a = \lim_{n \to \infty} \left\| \frac{\mathrm{d}U(t_n)}{\mathrm{d}t} \right\|_a = 0. 
	\end{equation}
	
	Given that $\{U(t_n)\}$ forms a bounded sequence in the Hilbert space $\big(H_0^1(\Omega)\big)^N$, we can extract a weakly convergent subsequence with a limit $\bar{U} \in \big(H_0^1(\Omega)\big)^N$, yielding
	\begin{equation*}
		U(t_n) \rightharpoonup \bar{U} \quad \text{weakly in } \big(H_0^1(\Omega)\big)^N.
	\end{equation*}
	 By the compact embedding $H_0^1(\Omega) \hookrightarrow L^2(\Omega)$, we have strong convergence in $\big(L^2(\Omega)\big)^N$, that is,
	\begin{equation*}
		U(t_n) \to \bar{U} \quad \text{strongly in } \big(L^2(\Omega)\big)^N,
	\end{equation*}
	This strong $L^2$ convergence preserves the respective inner product matrices, yielding
	\begin{equation*}
		\lim_{n \to \infty} \langle U(t_n), U(t_n) \rangle = \langle \bar{U}, \bar{U} \rangle, \quad \lim_{n \to \infty} \langle U(t_n), \mathcal{G}U(t_n) \rangle = \langle \bar{U}, \mathcal{G}\bar{U} \rangle,
	\end{equation*}
	which subsequently guarantees the weak convergence of the non-linear terms in the form
	\begin{equation*}
		\mathcal{G}U(t_n)\left\langle U(t_n), U(t_n) \right\rangle - U(t_n)\left\langle U(t_n), \mathcal{G}U(t_n) \right\rangle \rightharpoonup 
		\mathcal{G} \bar{U} \left\langle \bar{U},  \bar{U} \right\rangle  -  \bar{U}\left\langle  \bar{U},  \mathcal{G}\bar{U}\right\rangle.
	\end{equation*}
	This leads directly to the weak convergence of the gradient as
	\begin{equation*}
		\nabla_G E_{\mathcal{G}}(U(t_n)) \rightharpoonup \nabla_G E_{\mathcal{G}}(\bar{U})   \quad \text{weakly in } \big(H_0^1(\Omega)\big)^N.
	\end{equation*}
	Furthermore, \eqref{eq: gradient sequence limit} forces the strong convergence of the gradient, resulting in
	\begin{equation}\label{eq: gradient subsequence converge}
		\nabla_G E_{\mathcal{G}}(U(t_n)) \rightarrow \nabla_G E_{\mathcal{G}}(\bar{U})=0   \quad \text{strongly in } \big(H_0^1(\Omega)\big)^N.
	\end{equation}
	Thus, the limit $\bar{U} \in \mathcal{S}$ is a critical point of the energy functional. The strict uniqueness within $\mathcal{S}$ dictates that $[\bar{U}] = [V_*]$. Specifically, there exists an orthogonal matrix $Q_* \in \mathcal{O}^N$ satisfying $\bar{U} = V_* Q_*$.
	
	We observe from \eqref{eq: gradient subsequence converge} the identity
	\begin{equation*}
		\lim_{n \to \infty} \left( U(t_n), \nabla_G E_{\mathcal{G}}(U(t_n)) \right)_a = 0 = \left( \bar{U}, \nabla_G E_{\mathcal{G}}(\bar{U}) \right)_a.
	\end{equation*}
	Expanding the gradient $\nabla_G E_{\mathcal{G}}(U) = \mathcal{G}U - U\langle \mathcal{G}U, U \rangle$, the inner product decomposes as
	\begin{align*}
		\left( U(t_n), \nabla_G E_{\mathcal{G}}(U(t_n)) \right)_a &= \left( U(t_n), \mathcal{G}U(t_n) - U(t_n)\langle \mathcal{G}U(t_n), U(t_n) \rangle \right)_a
		\\
		&= (\mathcal{G}U(t_n), U(t_n))_a - \left( U(t_n), U(t_n)\langle \mathcal{G}U(t_n), U(t_n) \rangle \right)_a
		\\
		&= (U(t_n), U(t_n)) - \operatorname{tr}\left( \langle U(t_n), U(t_n) \rangle_a \langle \mathcal{G}U(t_n), U(t_n) \rangle \right).
	\end{align*}
	Taking the limit as $n \to \infty$ and exploiting the established strong $L^2$ convergence yields
	\begin{equation*}
		(\bar{U}, \bar{U}) - \lim_{n \to \infty} \operatorname{tr}\left( \langle U(t_n), U(t_n) \rangle_a \langle \mathcal{G}\bar{U}, \bar{U} \rangle \right) = 0 =(\bar{U}, \bar{U}) - \operatorname{tr}\left( \langle \bar{U}, \bar{U} \rangle_a \langle \mathcal{G}\bar{U}, \bar{U} \rangle \right).
	\end{equation*}
	{
	The derived trace relation yields
	$$
	\lim_{n \to \infty} \operatorname{tr}\big(\langle U(t_n), U(t_n) \rangle_a \langle \mathcal{G}\bar{U}, \bar{U} \rangle\big) = \operatorname{tr}\big(\langle \bar{U}, \bar{U} \rangle_a \langle \mathcal{G}\bar{U}, \bar{U} \rangle\big).
	$$
	
	Boundedness of $\{U(t_n)\}$ in $\big(H_0^1(\Omega)\big)^N$ implies boundedness of $\{\langle U(t_n), U(t_n) \rangle_a\}$ in $\mathbb{R}^{N\times N}$, which ensures the existence of convergent subsequences $\{\langle U(t_{n_k}), U(t_{n_k}) \rangle_a\}$.
	
	Since $U(t_{n_k}) \rightharpoonup \bar{U}$ in $\big(H_0^1(\Omega)\big)^N$, $U(t_{n_k})x \rightharpoonup \bar{U}x$ weakly in $H_0^1(\Omega)$ for all $x\in\mathbb{R}^N$. Consequently,
	$$
	x^\top \lim_{k\to\infty}\langle U(t_{n_k}), U(t_{n_k}) \rangle_a\, x
	= \lim_{k \to \infty} \|U(t_{n_k})x\|_a^2 \geqslant \|\bar{U}x\|_a^2
	= x^\top \langle \bar{U}, \bar{U} \rangle_a\, x,
	$$
	which implies
	$$
	\lim_{k\to\infty}\langle U(t_{n_k}), U(t_{n_k}) \rangle_a \geqslant \langle \bar{U}, \bar{U} \rangle_a.
	$$
	Taking limits along this subsequence in the trace equality leads to
	$$
	\operatorname{tr}\left( \left( \lim_{k\to\infty}\langle U(t_{n_k}), U(t_{n_k}) \rangle_a - \langle \bar{U}, \bar{U} \rangle_a \right) \langle \mathcal{G}\bar{U}, \bar{U} \rangle \right) = 0.
	$$
	Since $\langle \mathcal{G}\bar{U}, \bar{U} \rangle$ is symmetric positive definite, we conclude
	$$
	\lim_{k\to\infty}\langle U(t_{n_k}), U(t_{n_k}) \rangle_a = \langle \bar{U}, \bar{U} \rangle_a.
	$$
	As all convergent subsequences admit the same limit, the whole sequence converges
	$$
	\lim_{n \to \infty} \langle U(t_n), U(t_n) \rangle_a = \langle \bar{U}, \bar{U} \rangle_a,
	$$
	which yields
	$$
	\lim_{n \to \infty} \|U(t_n)\|_a^2 = \lim_{n \to \infty}\operatorname{tr}\big(\langle U(t_n), U(t_n) \rangle_a\big) = \operatorname{tr}\big(\langle \bar{U}, \bar{U} \rangle_a\big) = \|\bar{U}\|_a^2.
	$$
	Weak convergence together with further gives strong convergence
	$$
	U(t_n) \to \bar{U} = V_*Q_* \quad \text{strongly in } \big(H_0^1(\Omega)\big)^N,
	$$
	for some orthogonal matrix $Q_*\in\mathcal{O}^N$.
	}

	Furthermore, the continuity of the energy functional over $H_0^1(\Omega)$ yields the limit
	\begin{equation*}
		\lim_{n \to \infty} E(U(t_n)) = E(\bar{U}) = E(V_*).
	\end{equation*}
	Since the continuous temporal trajectory $E(U(t))$ is monotonically decreasing, the convergence of this subsequence dictates the convergence of the entire energy trajectory to the minimum state
	\begin{equation*}
		\lim_{t \to \infty} E(U(t)) = E(V_*).
	\end{equation*}
	
	It remains to verify that the entire trajectory $U(t)$ converges to the equivalence class $[V_*]$. Assume, for the sake of contradiction, that there exists an alternative subsequence $\{t_m\}_{m=1}^{\infty}$ and a strictly positive constant $\delta > 0$ satisfying
	\begin{equation*}
		\left\| [U(t_m)] - [V_*] \right\|_a \geqslant \delta, \quad \forall m.
	\end{equation*}
	Because the sequence $\{U(t_m)\} \subset \mathcal{S}$ remains uniformly bounded, identical compactness arguments permit the extraction of a strongly convergent sub-subsequence $U(t_{m_k}) \to \hat{U}$. As demonstrated previously, this limit must satisfy $\nabla_G E_{\mathcal{G}}(\hat{U}) = 0$, enforcing $[\hat{U}] = [V_*]$. This directly contradicts the assumption. Therefore, the convergence is independent of the choice of the subsequence, leading to the global asymptotic limit
	\begin{equation*}
		\begin{aligned}
			\lim_{t \rightarrow\infty}\left\|[U(t)]-[V_{*}]\right\|_a=0.
		\end{aligned}
	\end{equation*}
	Guided by the orbital-wise alignment $\bar{U} = V_*Q_*$, we definitively conclude the strong alignment
	\begin{equation*}
		\begin{aligned}
			\lim_{t \rightarrow\infty}\left\|U(t)-V_*Q_*\right\|_a = 	\lim_{t \rightarrow\infty}\left\|U(t)-\bar{U}\right\|_a=0.
		\end{aligned}
	\end{equation*}
	
	Finally, the continuity of the gradient operator $\nabla_G E_{\mathcal{G}}$ alongside the strong convergence of $U(t)$ to the critical point $V_* Q_*$ immediately yields
	\begin{equation*}
		\begin{aligned}
			\lim\limits_{t\rightarrow \infty}\left\| \nabla_G E_{\mathcal{G}}(U(t)) \right\|_a  = \left\|\nabla_G E_{\mathcal{G}}(V_*) \right\|_a =0. 
		\end{aligned}
	\end{equation*}
	This completes the proof.
\end{proof}

\subsubsection{Convergence rate}

Based on the convergence established in Theorem \ref{thm: convergence}, we now quantify the convergence rate. The following theorem asserts that the system converges exponentially, with a rate determined by the spectral gap.
\begin{theorem}\label{thm: exponential convergence}
	Let $U(t)$ be the solution of \eqref{equ: evolution problem} with initial data $U_0 \in \mathcal{S}$. For any $\varepsilon \in \left(0, \frac{1}{\lambda_N} - \frac{1}{\lambda_{N+1}} \right)$, there exist a finite time $T_{\varepsilon} > 0$ and a constant $C_{\varepsilon} > 0$ such that for all $t \geqslant T_{\varepsilon}$ there holds
	\begin{equation*}
		\begin{aligned}
			&\left\| \nabla_G E_{\mathcal{G}}(U(t)) \right\|_a \leqslant C_{\varepsilon}\exp\left(-\left( \frac{1}{\lambda_N} - \frac{1}{\lambda_{N+1}}-\varepsilon \right)t\right),
			\\
			&\left\|U(t) - V_*Q_*\right\|_a \leqslant C_{\varepsilon}\exp\left(-\left( \frac{1}{\lambda_N} - \frac{1}{\lambda_{N+1}}-\varepsilon \right)t\right),
			\\
			&E(U(t)) - E(V_*) \leqslant C_{\varepsilon}\exp\left(-2\left( \frac{1}{\lambda_N} - \frac{1}{\lambda_{N+1}}-\varepsilon \right)t\right).
		\end{aligned}
	\end{equation*}
\end{theorem}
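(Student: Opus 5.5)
The plan is a local linearization argument around the limit $V_*Q_*$, which Theorem \ref{thm: convergence} guarantees the trajectory eventually enters.

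\textbf{Step 1: reduce to the eigenbasis.} Write $U(t) = V_*Q_* + W(t)$, with $W(t)\to 0$ in $\big(H_0^1(\Omega)\big)^N$. Split $W = V_* A(t) + W_\perp(t)$, where $A(t)=\langle V_*, W\rangle$ and $W_\perp$ is $L^2$-orthogonal to $\operatorname{span}V_*$. Since $\operatorname{span}V_*$ is invariant under $\mathcal{G}$, the projected equation for $W_\perp$ reads
\[
\frac{\mathrm{d}W_\perp}{\mathrm{d}t} = \mathcal{G}W_\perp - W_\perp\langle \mathcal{G}U,U\rangle .
\]
This is exact, because the projection $P_\perp$ commutes with $\mathcal{G}$ and $P_\perp U = W_\perp$.

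\textbf{Step 2: energy estimate for $W_\perp$.} On the complement of $\operatorname{span}V_*$ we have $(\mathcal{G}w,w)_a = \|w\|^2 \leqslant \lambda_{N+1}^{-1}\|w\|_a^2$. Moreover $\langle\mathcal{G}U,U\rangle \to Q_*^\top \Lambda^{-1}Q_*$, whose smallest eigenvalue is $1/\lambda_N$. Hence, for $t\geqslant T_\varepsilon$,
\[
\frac12\frac{\mathrm{d}}{\mathrm{d}t}\|W_\perp\|_a^2 \leqslant -\Big(\frac1{\lambda_N}-\frac1{\lambda_{N+1}}-\varepsilon\Big)\|W_\perp\|_a^2 ,
\]
and Grönwall gives the stated rate for $\|W_\perp\|_a$. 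This is the step where the spectral gap enters, and where mesh-independence comes from, since $\mathcal{G}$ is bounded.

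\textbf{Step 3: control the in-span part $A(t)$.} This is the main obstacle. The component $V_*A$ also carries the orthogonality defect $\langle U,U\rangle - I_N$, which decays only at the rate $1/E(U_0)$ from Theorem \ref{thm: asymptotic orthogonality}. A cleaner route is to use the explicit solution formula of Theorem \ref{expression of solution}. Expand $U_0 = V_*B + R_0$ with $R_0\perp V_*$ and $B$ invertible; invertibility follows because $U_0\in\mathcal{S}$ keeps the energy below $E_1$, which excludes degenerate overlap with $\operatorname{span}V_*$. Then
\[
\exp(\mathcal{G}t)U_0 = V_*e^{\Lambda^{-1}t}B + e^{\mathcal{G}t}R_0 ,
\]
with $\|e^{\mathcal{G}t}R_0\|_a \leqslant e^{t/\lambda_{N+1}}\|R_0\|_a$. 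The normalizing matrix $\big[I-\langle U_0,U_0\rangle+\langle U_0,e^{2\mathcal{G}t}U_0\rangle\big]^{-1/2}$ is dominated by $(B^\top e^{2\Lambda^{-1}t}B)^{-1/2}$, and the relative error is of order $e^{-2t/\lambda_N}$. Substituting, $U(t) = V_*\Theta(t) + O\big(e^{-(1/\lambda_N-1/\lambda_{N+1})t}\big)$, where
\[
\Theta(t) = e^{\Lambda^{-1}t}B\,(B^\top e^{2\Lambda^{-1}t}B)^{-1/2}Q(t)
\]
is exactly orthogonal. Since $U(t)\to V_*Q_*$ is already known, $\Theta(t)\to Q_*$. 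The remaining work is to show that $\Theta(t)-Q_*$ decays at the gap rate as well. I would do this by writing $\Theta(t)=\langle V_*,U(t)\rangle+O(\cdot)$ and feeding it into the ODE for $\langle V_*,U\rangle$, namely $\dot A = \Lambda^{-1}A - A\langle\mathcal{G}U,U\rangle$. This ODE has $Q_*$ as a fixed point, and its linearization there is stable up to terms driven by $W_\perp$ and by the defect $\langle U,U\rangle-I_N$. If the defect rate $1/E(U_0)$ is slower than the gap rate, I would absorb the difference into $C_\varepsilon$ and $T_\varepsilon$, or fall back on the closed form above. That fallback shows the in-span error is governed by $e^{-t/\lambda_N}$ terms times $W_\perp$, so the gap rate survives.

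\textbf{Step 4: gradient and energy.} For the gradient, use Lemma \ref{lem: Local Lipschitz property}:
\[
\|\nabla_G E_{\mathcal{G}}(U)\|_a = \|\nabla_G E_{\mathcal{G}}(U)-\nabla_G E_{\mathcal{G}}(V_*Q_*)\|_a \leqslant C\|U-V_*Q_*\|_a .
\]
For the energy, on the near-orthogonal set the difference $E(U)-E(V_*)$ is quadratic in the perpendicular error plus a term proportional to the orthogonality defect, since $\operatorname{tr}\langle V_*A,V_*A\rangle_a$ combines with the defect at second order. This yields the doubled exponent, after enlarging $C_\varepsilon$.
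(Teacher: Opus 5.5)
Your Step 2 is correct and clean. Since $\operatorname{span}V_*$ is $\mathcal{G}$-invariant, the equation $\frac{\mathrm{d}W_\perp}{\mathrm{d}t}=\mathcal{G}W_\perp-W_\perp\langle\mathcal{G}U,U\rangle$ is exact, and the $a$-energy estimate gives $\|W_\perp(t)\|_a\leqslant Ce^{-(1/\lambda_N-1/\lambda_{N+1}-\varepsilon)t}$.

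The genuine gap is Step 3, and it carries over into Step 4. Write $A=Q_*(I_N+X)$. The unperturbed in-span flow $\dot A=\Lambda^{-1}A-AA^\top\Lambda^{-1}A$ then linearizes to $\dot X=-(X+X^\top)Q_*^\top\Lambda^{-1}Q_*$. Every orthogonal matrix is an equilibrium, so the linearization is neutral on all skew $X$. Nothing in the linear part pulls the rotational error back to $Q_*$, so your claim that the linearization is stable fails exactly in the directions you need.

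The closed form does not rescue this. Theorem~\ref{expression of solution} leaves the orthogonal factor $Q(t)$ unspecified, and the rate at which $\Theta(t)\to Q_*$ depends on $Q(t)$, about which the formula says nothing. Finally, a slower exponential rate such as $1/E(U_0)$ cannot be absorbed into $C_\varepsilon$ and $T_\varepsilon$. Your energy bound $E(U)-E(V_*)\lesssim\|\langle U,U\rangle-I_N\|+\|W_\perp\|_a^2$ needs the defect to decay at rate at least $2(1/\lambda_N-1/\lambda_{N+1}-\varepsilon)$, so the energy estimate in Step 4 is also unproved as it stands.

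Two observations close the gap.

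First, rerun \eqref{eq:orthogonality time derivative} using the asymptotic bound $\lambda_{\min}(\langle U,\mathcal{G}U\rangle)\geqslant 1/\lambda_N-\varepsilon$ for $t\geqslant T_\varepsilon$ (the same bound you use in Step 2), instead of the global bound $1/(2E(U_0))$. This gives $\|\langle U,U\rangle-I_N\|\leqslant Ce^{-2(1/\lambda_N-\varepsilon)t}$, which is faster than the gap rate; this is what the paper does.

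Second, do not linearize. With $A=\langle V_*,U\rangle$ one has $A^\top A=\langle U,U\rangle-\langle W_\perp,W_\perp\rangle$ and, exactly,
\[
\Big\langle V_*,\frac{\mathrm{d}U}{\mathrm{d}t}\Big\rangle=(I_N-AA^\top)\Lambda^{-1}A-A\langle W_\perp,\mathcal{G}W_\perp\rangle ,
\]
where $\|I_N-AA^\top\|=\|I_N-A^\top A\|$ because $A$ is square. Hence
\[
\|\nabla_G E_{\mathcal{G}}(U)\|_a=\Big\|\frac{\mathrm{d}U}{\mathrm{d}t}\Big\|_a\leqslant C\big(\|W_\perp\|_a+\|\langle U,U\rangle-I_N\|\big)
\]
decays at the gap rate. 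Then $U(t)-V_*Q_*=-\int_t^\infty\frac{\mathrm{d}U}{\mathrm{d}s}\,\mathrm{d}s$, which is legitimate by Theorem~\ref{thm: convergence}, controls the rotational part automatically. So Step 4 must run in the opposite order: gradient first, then the solution.

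Repaired this way, your argument is a genuinely different route from the paper's. The paper differentiates $g(t)=\frac12\|\nabla_G E_{\mathcal{G}}(U(t))\|_a^2$ and bounds $\|\frac{\mathrm{d}U}{\mathrm{d}t}\|^2$ by $(1/\lambda_{N+1}+\varepsilon/4)\|\frac{\mathrm{d}U}{\mathrm{d}t}\|_a^2$ via an asymptotic Rayleigh quotient. It handles the $\frac{\mathrm{d}}{\mathrm{d}t}\langle U,\mathcal{G}U\rangle$ term with Young's inequality and then applies Gr\"onwall. Your decomposition instead places the spectral gap in an exact linear equation for $W_\perp$ and controls the in-span part of the gradient through the defect. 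It therefore does not need the gradient to be asymptotically orthogonal to $\operatorname{span}V_*$ relative to its own size.
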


\begin{proof}
		The strong convergence of $U(t)$ and $\nabla_G E_{\mathcal{G}}(U(t))$ in $\big(H_0^1(\Omega)\big)^N$ established in Theorem \ref{thm: convergence} implies the asymptotic matrix inequality
	\begin{equation*}
		\begin{aligned}
			\lim\limits_{t\rightarrow\infty} \left\langle U(t), \mathcal{G}U(t) \right\rangle  = Q_*^\top \left\langle V_*, \mathcal{G}V_* \right\rangle Q_* \geqslant \frac{1}{\lambda_N}I_N
		\end{aligned}
	\end{equation*}
	and correspondingly the orthogonal relation
	\begin{equation*}
		\begin{aligned}
			\lim\limits_{t\rightarrow\infty} 	\left\langle  \frac{\mathrm{d} U(t)  }{\mathrm{d}  t},V_*\right\rangle = 	\left\langle 	\nabla_G E_{\mathcal{G}}(V_* Q_*), V_* \right\rangle =Q_*^\top \left\langle 	\nabla_G E_{\mathcal{G}}(V_*), V_* \right\rangle = 0.
		\end{aligned}
	\end{equation*}
	This orthogonality condition indicates that the derivative $ \frac{\mathrm{d} U(t)  }{\mathrm{d}  t}$ asymptotically resides in the orthogonal complement of the eigenspace spanned by $V_*$. Thus, the Rayleigh quotient yields the spectral estimate
	\begin{equation*}
		\begin{aligned}
			\frac{\left\|	\nabla_G E_{\mathcal{G}}(V_*)\right\|^2}{\left\|	\nabla_G E_{\mathcal{G}}(V_*)\right\|_a^2} \leqslant \frac{1}{\lambda_{N+1}},
		\end{aligned}
	\end{equation*}
	which implies
	\begin{equation*}
		\begin{aligned}
			\limsup_{t \rightarrow\infty} 	\frac{ \left\|	\nabla_G E_{\mathcal{G}}(U(t))\right\|^2}{\left\|	\nabla_G E_{\mathcal{G}}(U(t))\right\|_a^2} \leqslant \frac{1}{\lambda_{N+1}}.
		\end{aligned}
	\end{equation*}
	Therefore, for any sufficiently small $\varepsilon \in \left(0, \frac{1}{\lambda_N} - \frac{1}{\lambda_{N+1}} \right)$, there exists a critical time $t_{\varepsilon}$ such that for all $t\geqslant t_{\varepsilon}$ we obtain the bounds
	\begin{equation*}
		\begin{aligned}
			&  \left\langle U(t), \mathcal{G}U(t) \right\rangle\geqslant \left( \frac{1}{\lambda_N}-\frac{\varepsilon}{4}\right)\cdot \left\langle U(t), U(t) \right\rangle   \geqslant\left( \frac{1}{\lambda_N}-\frac{\varepsilon}{4}\right) \cdot I_N,
			\\&	\left\|	\nabla_G E_{\mathcal{G}}(U(t))\right\|^2 \leqslant \left( \frac{1}{\lambda_{N+1}}+\frac{\varepsilon}{4}\right)\cdot \left\|	\nabla_G E_{\mathcal{G}}(U(t))\right\|_a^2.
		\end{aligned}
	\end{equation*}
	According to \eqref{eq:orthogonality time derivative}, we can eastimate orthogonality error again that 
	\begin{equation*}
		\begin{aligned}
			\frac{\mathrm{d}    }{\mathrm{d}  t} \left\|\left\langle U(t), U(t) \right\rangle  - I_N\right\|^2=& -4 \operatorname{tr}\left[\left(\left\langle U(t),U(t) \right\rangle  - I_N\right) \cdot \left\langle U(t), \mathcal{G}U(t) \right\rangle  \cdot \left(\left\langle U(t),U(t) \right\rangle  - I_N\right)  \right]
			\\ &\leqslant -4 \left( \frac{1}{\lambda_N}-\frac{\varepsilon}{4}\right) \left\|\left\langle U(t), U(t) \right\rangle  - I_N\right\|^2
		\end{aligned}
	\end{equation*}
	and thus
	\begin{equation*}
		\begin{aligned}
			\left\|\left\langle U(t), U(t) \right\rangle  - I_N\right\| \leqslant \left\|\left\langle U_0, U_0 \right\rangle  - I_N\right\| \exp\left(-2\left( \frac{1}{\lambda_N}-\frac{\varepsilon}{4}\right)t\right).
		\end{aligned}
	\end{equation*}

	We introduce the auxiliary functional $g(t) \coloneqq \frac{1}{2} \left\| \nabla_G E_{\mathcal{G}}(U(t)) \right\|_a^2 = \frac{1}{2} \left\| \frac{\mathrm{d}U(t)}{\mathrm{d}t} \right\|_a^2$. Differentiating $g(t)$ with respect to time yields
	\begin{align*}
		\frac{\mathrm{d}g(t)}{\mathrm{d}t} &= \left( \frac{\mathrm{d}U(t)}{\mathrm{d}t}, \frac{\mathrm{d}^2 U(t)}{\mathrm{d}t^2} \right)_a 
	= \left( \frac{\mathrm{d}U(t)}{\mathrm{d}t}, \frac{\mathrm{d}}{\mathrm{d}t} \left( \mathcal{G}U(t) - U(t) \langle \mathcal{G}U(t), U(t) \rangle \right) \right)_a
		\\
		&= \left( \frac{\mathrm{d}U(t)}{\mathrm{d}t}, \mathcal{G}\frac{\mathrm{d}U(t)}{\mathrm{d}t} - \frac{\mathrm{d}U(t)}{\mathrm{d}t}\langle \mathcal{G}U(t), U(t) \rangle - U(t) \frac{\mathrm{d}}{\mathrm{d}t}\langle \mathcal{G}U(t), U(t) \rangle \right)_a,
	\end{align*}
	and we decompose this expression into three distinct inner product terms, taking the form
	\begin{equation}\label{eq: g_deriv_decomp}
		\begin{aligned}
			\frac{\mathrm{d}g(t)}{\mathrm{d}t} =& \left( \frac{\mathrm{d}U(t)}{\mathrm{d}t}, \frac{\mathrm{d}U(t)}{\mathrm{d}t} \right) - \left( \frac{\mathrm{d}U(t)}{\mathrm{d}t}, \frac{\mathrm{d}U(t)}{\mathrm{d}t} \langle U(t), \mathcal{G}U(t) \rangle \right)_a 
			\\
			&\quad - \left( \frac{\mathrm{d}U(t)}{\mathrm{d}t}, U(t) \frac{\mathrm{d}}{\mathrm{d}t}\langle U(t), \mathcal{G}U(t) \rangle \right)_a.
		\end{aligned}
	\end{equation}

	We first analyze the third term on the right-hand side. By substituting the evolution identity $\frac{\mathrm{d}U(t)}{\mathrm{d}t} = \mathcal{G}U(t) - U(t)\langle \mathcal{G}U(t), U(t) \rangle$, the term expands algebraically as
	{
	\begin{equation*}
		\begin{aligned}
					\left( \frac{\mathrm{d} U(t)  }{\mathrm{d}  t}, U(t) \cdot  \frac{\mathrm{d}    }{\mathrm{d}  t} \left\langle U(t), \mathcal{G}U(t) \right\rangle\right)_a	 &=     -\operatorname{tr}\left(  \left(\langle U(t), U(t) \rangle - I_N\right) \cdot  \frac{\mathrm{d}    }{\mathrm{d}  t} \left\langle U(t), \mathcal{G}U(t) \right\rangle \right)
			\\ &- \operatorname{tr}\left( \left\langle \frac{\mathrm{d} U(t)  }{\mathrm{d}  t}, \frac{\mathrm{d} U(t)  }{\mathrm{d}  t} \right\rangle_a\cdot \left\langle U(t), \mathcal{G}U(t) \right\rangle^{-1} \cdot  \frac{\mathrm{d}    }{\mathrm{d}  t} \left\langle U(t), \mathcal{G}U(t) \right\rangle \right)
		\end{aligned}
	\end{equation*}
	Differentiating the inner product matrix $\langle U(t), \mathcal{G}U(t) \rangle$ directly yields the symmetric expression
	\begin{equation*}
		\begin{aligned}
		\left\| \frac{\mathrm{d}}{\mathrm{d}t}\langle U(t), \mathcal{G}U(t) \rangle \right\| &=	\left\|  \left\langle \frac{\mathrm{d}U(t)}{\mathrm{d}t}, \mathcal{G}U(t) \right\rangle + \left\langle \mathcal{G}U(t), \frac{\mathrm{d}U(t)}{\mathrm{d}t} \right\rangle \right\|
			\\
			& \leqslant  \frac{2c_{\Omega}}{\lambda_1} \sqrt{\lambda_{\max}\left(\left\langle U_0, U_0 \right\rangle\right)}	\left\|  \frac{\mathrm{d}U(t)}{\mathrm{d}t}\right\|_a 
		\end{aligned}
	\end{equation*}
		
	Thus, combine with Lemma \ref{lem:bound of UGU} and Young's inequality, for any $\varepsilon \in \left(0, \frac{1}{\lambda_N} - \frac{1}{\lambda_{N+1}}\right)$, there exists a $c_{\varepsilon}$ such that
	\begin{equation*}
		\begin{aligned}
			&\left|	\left( \frac{\mathrm{d} U(t)  }{\mathrm{d}  t}, U(t) \cdot  \frac{\mathrm{d}    }{\mathrm{d}  t} \left\langle U(t), \mathcal{G}U(t) \right\rangle\right)_a \right|
				\\& \leqslant   \frac{2c_{\Omega}}{\lambda_1} \sqrt{\lambda_{\max}\left(\left\langle U_0, U_0 \right\rangle\right)}\cdot \left\|\langle U(t), U(t) \rangle - I_N\right\|	\left\|  \frac{\mathrm{d}U(t)}{\mathrm{d}t}\right\|_a  
		 + \frac{4E(U_0)c_{\Omega}}{\lambda_1} \sqrt{\lambda_{\max}\left(\left\langle U_0, U_0 \right\rangle\right)}	\left\|  \frac{\mathrm{d}U(t)}{\mathrm{d}t}\right\|_a^3
				\\& \leqslant \frac{\varepsilon}{4} 	\left\|  \frac{\mathrm{d}U(t)}{\mathrm{d}t}\right\|_a^2 +c_{\epsilon}\left\|\langle U(t), U(t) \rangle - I_N\right\|^2 	 + \frac{4E(U_0)c_{\Omega}}{\lambda_1} \sqrt{\lambda_{\max}\left(\left\langle U_0, U_0 \right\rangle\right)}	\left\|  \frac{\mathrm{d}U(t)}{\mathrm{d}t}\right\|_a^3
		\end{aligned}
	\end{equation*}
		Since $\left\| \frac{\mathrm{d}U(t)}{\mathrm{d}t} \right\|_a \to 0$ as $t \to \infty$, there exists a time $T_{\varepsilon}\geqslant t_{\varepsilon}$ such that for all $t \geqslant T_{\varepsilon}$, the last term is strictly bounded by $\frac{\varepsilon}{4} \left\| \frac{\mathrm{d}U(t)}{\mathrm{d}t} \right\|_a^2$, that is,
		\begin{equation*}
			\begin{aligned}
			\left|	\left( \frac{\mathrm{d} U(t)  }{\mathrm{d}  t}, U(t) \cdot  \frac{\mathrm{d}    }{\mathrm{d}  t} \left\langle U(t), \mathcal{G}U(t) \right\rangle\right)_a \right|
			&	\leqslant \frac{\varepsilon}{2}	\left\|  \frac{\mathrm{d}U(t)}{\mathrm{d}t}\right\|_a^2 +c_{\epsilon}\left\|\langle U(t), U(t) \rangle - I_N\right\|^2
				\\ & \leqslant \frac{\varepsilon}{2}	\left\|  \frac{\mathrm{d}U(t)}{\mathrm{d}t}\right\|_a^2 + c_{\epsilon}\left\|\left\langle U_0, U_0 \right\rangle  - I_N\right\|^2 \exp\left(-4\left( \frac{1}{\lambda_N}-\frac{\varepsilon}{4}\right)t\right).
			\end{aligned}
		\end{equation*}
For the first two terms on the right-hand side, invoking the spectral bounds yields
\begin{equation*}
	\begin{aligned}
	&	\left( \frac{\mathrm{d}U(t)}{\mathrm{d}t}, \frac{\mathrm{d}U(t)}{\mathrm{d}t} \right) - \left( \frac{\mathrm{d}U(t)}{\mathrm{d}t}, \frac{\mathrm{d}U(t)}{\mathrm{d}t} \langle U(t), \mathcal{G}U(t) \rangle \right)_a  
		\\&\leqslant \left\| \frac{\mathrm{d}U(t)}{\mathrm{d}t} \right\|^2 -\left( \frac{1}{\lambda_N}-\frac{\varepsilon}{4}\right)  \left\| \frac{\mathrm{d}U(t)}{\mathrm{d}t} \right\|_a^2 
		\\ &\leqslant \left( \frac{1}{\lambda_{N+1}}+\frac{\varepsilon}{4}\right) \left\| \frac{\mathrm{d}U(t)}{\mathrm{d}t} \right\|_a^2 -\left( \frac{1}{\lambda_N}-\frac{\varepsilon}{4}\right)  \left\| \frac{\mathrm{d}U(t)}{\mathrm{d}t} \right\|_a^2 
		\\&= - \left( \frac{1}{\lambda_N} - \frac{1}{\lambda_{N+1}} -\frac{\varepsilon}{2} \right) \left\| \frac{\mathrm{d}U(t)}{\mathrm{d}t} \right\|_a^2 .
	\end{aligned}
\end{equation*}
}

	Substituting these bounds into \eqref{eq: g_deriv_decomp} for $t \geqslant T_{\varepsilon}$, we obtain
	\begin{align*}
		\frac{\mathrm{d}g(t)}{\mathrm{d}t}
		\leqslant -2 \left( \frac{1}{\lambda_N} - \frac{1}{\lambda_{N+1}} -\varepsilon \right) g(t) +c_{\epsilon}\left\|\left\langle U_0, U_0 \right\rangle  - I_N\right\|^2 \exp\left(-4\left( \frac{1}{\lambda_N}-\frac{\varepsilon}{4}\right)t\right).
	\end{align*}
	
	Applying Grönwall's inequality, there exists a constant $C_{\varepsilon}$ such that
	\begin{equation*}
		\left\| \nabla_G E_{\mathcal{G}}(U(t)) \right\|_a \leqslant C_{\varepsilon} \exp\left(-\left(\frac{1}{\lambda_N} - \frac{1}{\lambda_{N+1}} -\varepsilon  \right)t\right),
	\end{equation*}
	and the convergence estimate for the solution is recovered through direct integration as follows
	\begin{equation*}
		\begin{aligned}
			\left\|U(t) - V_* Q_*\right\|_a  = &	\left\|  \int_{t}^{\infty}\frac{\mathrm{d} U(s) }{\mathrm{d}  s} \text{d}s \right\|_a \leqslant 	 \int_{t}^{\infty}\left\|\nabla_G E_{\mathcal{G}}(U(s)) \right\|_a \text{d}s  
			\\ =& C_{\varepsilon} \exp\left(-\left( \frac{1}{\lambda_N} - \frac{1}{\lambda_{N+1}} -\varepsilon  \right)t\right),
		\end{aligned}
	\end{equation*}
and the energy convergence can be obtained from \eqref{eq:energy time derivative} as
		\begin{equation*}
			\begin{aligned}
				E(U(t)) - E(V_*) =& \left| \int_{t}^{\infty}\frac{\mathrm{d}  E(U(s))  }{\mathrm{d}  s} \text{d}s \right| \leqslant  \int_{t}^{\infty} \left|\frac{\mathrm{d}  E(U(s))  }{\mathrm{d}  s} \right| \text{d}s
				\\ = & \int_{t}^{\infty}	\left| \left( \nabla_G E_{\mathcal{G}}(U) \langle \mathcal{G} U, U \rangle^{-1}, \nabla_G E_{\mathcal{G}}(U) \right)_a\right| +\left| \operatorname{tr}\left(\left\langle U(s), U(s) \right\rangle -I_N\right) \right| \text{d}s
				\\ \leqslant & \frac{1}{\frac{1}{\lambda_N} - \frac{\varepsilon}{4}} 	 \int_{t}^{\infty}	\left\| \nabla_G E_{\mathcal{G}}(U(s)) \right\|_a^2  \text{d}s  +\sqrt{N} 	 \int_{t}^{\infty}	 \left\|\left\langle U(s), U(s) \right\rangle  - I_N\right\| \text{d}s 
				\\ \leqslant & C_{\varepsilon}  \exp\left(-2\left(\frac{1}{\lambda_N} - \frac{1}{\lambda_{N+1}} -\varepsilon \right)t\right) .
			\end{aligned}
		\end{equation*}

	Thus, we arrive at the desired conclusion.
\end{proof}

 \section{Time discretization}\label{section:Time discretization}
 
 Transitioning from the continuous evolution model \eqref{equ: evolution problem}, we propose a quasi-orthogonal numerical method for its temporal discretization in this section. This proposed scheme is specifically designed to simulate the continuous dynamics while strictly preserving the quasi-orthogonal structural constraint $U_n \in \mathcal{M}_{\geqslant}^N$ throughout the iterative process.
 
 \subsection{Quasi-orthogonal scheme}
 
 Let $\{t_n\}_{n=0}^{\infty} \subset [0, +\infty)$ be a strictly increasing sequence of discrete time levels satisfying $t_0 = 0$ and $\lim_{n \to \infty} t_n = +\infty$. We define the variable time step size as
 \begin{equation*}
 	\tau_n \coloneqq t_{n+1} - t_n.
 \end{equation*}
 
 Given a current approximation $U_n$ satisfying the quasi-orthogonality condition $\langle U_n, U_n \rangle \geqslant I_N$, we compute the subsequent iterate $U_{n+1}$ by employing a two-stage strategy formulated as
 \begin{equation}\label{equ: numerical scheme}
 	\left\{\begin{aligned}
 		&\frac{\hat{U}_{n+1} - U_n}{\tau_n} = \mathcal{G} U_n \langle U_n, \tilde{U}_{n + \frac{1}{2}} \rangle - U_n \langle \mathcal{G} U_n, \tilde{U}_{n + \frac{1}{2}} \rangle, \quad \text{where } \tilde{U}_{n + \frac{1}{2}} \coloneqq \frac{\hat{U}_{n+1} + U_n}{2},
 		\\
 		&\frac{U_{n+1} - \hat{U}_{n+1}}{\tau_n} = - \mathcal{G}\hat{U}_{n+1} \left( \langle \hat{U}_{n+1}, \hat{U}_{n+1} \rangle - I_N \right).
 	\end{aligned}\right.
 \end{equation}
 
 To facilitate the theoretical analysis of this scheme, we introduce the operator $\mathcal{A}_U: \big(H_0^1(\Omega)\big)^N \to \big(H_0^1(\Omega)\big)^N$, which is defined for a fixed $U$ by
 \begin{equation*}
 	\mathcal{A}_U V \coloneqq \mathcal{G}U \langle U, V \rangle - U \langle U, \mathcal{G}V \rangle.
 \end{equation*}
 A direct calculation verifies that $\mathcal{A}_U$ satisfies the skew-symmetry property expressed as
 \begin{equation*}
 	\langle V, \mathcal{A}_U V \rangle = - \langle \mathcal{A}_U V, V \rangle, \quad \forall\, V \in \big(H_0^1(\Omega)\big)^N.
 \end{equation*}
Then, the numerical scheme \eqref{equ: numerical scheme} is equivalently reformulated as
 \begin{equation}
 	\tag{\ref{equ: numerical scheme}$*$}
 	\left\{\begin{aligned}
 		&\hat{U}_{n+1} =U_n +\tau_n \mathcal{A}_{U_n} \tilde{U}_{n + \frac12}, 
 		\\&U_{n+1} = \hat{U}_{n+1} -\tau_n \mathcal{G}\hat{U}_{n+1}\left(\langle \hat{U}_{n+1},\hat{U}_{n+1}\rangle -I_N \right).
 	\end{aligned}\right.
 \end{equation}
 
 For notational brevity in the subsequent proofs, we define several auxiliary matrices by
 \begin{align*}
 	O_n &\coloneqq \langle U_n, U_n \rangle - I_N,
 	\\
 	A_n &\coloneqq \langle \hat{U}_{n+1}, \mathcal{G}\hat{U}_{n+1} \rangle,
 	\\
 	B_n &\coloneqq \langle \mathcal{G}\hat{U}_{n+1}, \mathcal{G}\hat{U}_{n+1} \rangle.
 \end{align*}
 
 Before analyzing the properties of the proposed scheme, we first establish essential spectral bounds for these auxiliary matrices.
 
 \begin{lemma}\label{lem: matrix bounds}
 	Let $U_n \in \mathcal{M}_{\geqslant}^N$. Then $O_n$ is positive semi-definite, and $A_n, B_n$ are positive definite. Furthermore, the following estimates hold:
 	\begin{enumerate}
 		\item $0 \leqslant O_n \leqslant \langle U_n, U_n \rangle$.
 		\item $0 < A_n \leqslant \frac{1}{\lambda_1} \langle \hat{U}_{n+1}, \hat{U}_{n+1} \rangle = \frac{1}{\lambda_1} \langle U_n, U_n \rangle \leqslant \frac{\lambda_{\max}(\langle U_n, U_n \rangle)}{\lambda_1} I_N$.
 		\item $0 < B_n \leqslant \frac{1}{\lambda_1} A_n \leqslant \frac{\lambda_{\max}(\langle U_n, U_n \rangle)}{\lambda_1^2} I_N$.
 	\end{enumerate}
 \end{lemma}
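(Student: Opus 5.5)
We treat the three items in turn; the only nontrivial input is that the first stage of \eqref{equ: numerical scheme} preserves the Gram matrix.

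\textbf{Item 1.} It is immediate from the definition of $\mathcal{M}_{\geqslant}^N$. Indeed, $\langle U_n,U_n\rangle\geqslant I_N$ gives $O_n\geqslant 0$. Moreover $O_n=\langle U_n,U_n\rangle-I_N\leqslant\langle U_n,U_n\rangle$, since $I_N>0$.

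\textbf{Key step: Gram matrix invariance.} We show that
\[
\langle \hat U_{n+1},\hat U_{n+1}\rangle=\langle U_n,U_n\rangle .
\]
Write $\hat U_{n+1}-U_n=\tau_n\mathcal{A}_{U_n}\tilde U_{n+\frac12}$ and use the polarization identity
\[
\langle \hat U_{n+1},\hat U_{n+1}\rangle-\langle U_n,U_n\rangle
=\langle \hat U_{n+1}-U_n,\tilde U_{n+\frac12}\rangle+\langle \tilde U_{n+\frac12},\hat U_{n+1}-U_n\rangle .
\]
Substituting, the right-hand side becomes
\[
\tau_n\bigl(\langle \mathcal{A}_{U_n}\tilde U_{n+\frac12},\tilde U_{n+\frac12}\rangle+\langle \tilde U_{n+\frac12},\mathcal{A}_{U_n}\tilde U_{n+\frac12}\rangle\bigr),
\]
which vanishes by the skew-symmetry of $\mathcal{A}_U$ stated above. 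The skew-symmetry itself is checked directly. Since $\langle X,Y\rangle^\top=\langle Y,X\rangle$ and $\mathcal{G}$ is symmetric,
\[
\langle V,\mathcal{A}_UV\rangle=\langle V,\mathcal{G}U\rangle\langle U,V\rangle-\langle V,U\rangle\langle U,\mathcal{G}V\rangle,
\]
and the sum of this matrix with its transpose cancels term by term. Existence of $\hat U_{n+1}$ (solvability of the implicit midpoint stage) is implicitly assumed by the statement; we take it as given.

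\textbf{Item 2.} Positivity of $A_n$: since $\langle \hat U_{n+1},\hat U_{n+1}\rangle=\langle U_n,U_n\rangle\geqslant I_N$, the columns of $\hat U_{n+1}$ are linearly independent. For $x\neq0$ we then have $\hat U_{n+1}x\neq 0$, so
\[
x^\top A_nx=(\hat U_{n+1}x,\mathcal{G}\hat U_{n+1}x)=\|\mathcal{G}\hat U_{n+1}x\|_a^2>0 .
\]
Upper bound: the spectral bound $(w,\mathcal{G}w)\leqslant\lambda_1^{-1}\|w\|^2$, with $w=\hat U_{n+1}x$, gives
\[
A_n\leqslant\lambda_1^{-1}\langle\hat U_{n+1},\hat U_{n+1}\rangle=\lambda_1^{-1}\langle U_n,U_n\rangle\leqslant\lambda_1^{-1}\lambda_{\max}(\langle U_n,U_n\rangle)I_N .
\]

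\textbf{Item 3.} Set $w=\hat U_{n+1}x$ and $z=\mathcal{G}w$. Then $x^\top B_nx=\|z\|^2$, and $z\neq0$ by injectivity of $\mathcal{G}$, so $B_n>0$. Poincaré-type spectral bound $\|z\|^2\leqslant\lambda_1^{-1}\|z\|_a^2$, combined with $\|z\|_a^2=(\mathcal{G}w,\mathcal{G}w)_a=(w,\mathcal{G}w)=x^\top A_nx$, gives $B_n\leqslant\lambda_1^{-1}A_n$. Combining with item 2 yields
\[
B_n\leqslant\lambda_1^{-2}\lambda_{\max}(\langle U_n,U_n\rangle)I_N .
\]

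The main obstacle is the Gram invariance of the first stage; everything else is a direct application of $\mathcal{G}$'s spectral bounds.
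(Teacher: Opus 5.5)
Your proof is correct. The spectral part of items 1--3 is what the paper has in mind: the paper gives no separate proof of this lemma, and those bounds follow from $(w,\mathcal{G}w)=\|\mathcal{G}w\|_a^2\leqslant\lambda_1^{-1}\|w\|^2$ and $\|z\|^2\leqslant\lambda_1^{-1}\|z\|_a^2$, exactly as you use them.

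You genuinely differ on the one nontrivial ingredient, $\langle\hat U_{n+1},\hat U_{n+1}\rangle=\langle U_n,U_n\rangle$. The paper only establishes this afterwards, inside the proof of Lemma~\ref{lem: non-expansion}. There it rewrites the first stage as the Cayley transform $\hat U_{n+1}=(\mathcal{I}-\frac{\tau_n}{2}\mathcal{A}_{U_n})^{-1}(\mathcal{I}+\frac{\tau_n}{2}\mathcal{A}_{U_n})U_n$ and moves the factors across the pairing, using skew-adjointness and commutation of the resolvents.

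Your polarization identity is the standard argument that the implicit midpoint rule conserves quadratic invariants. It works directly from the defining relation and uses the skew-symmetry $\langle V,\mathcal{A}_UV\rangle=-\langle\mathcal{A}_UV,V\rangle$ only at the single argument $V=\tilde U_{n+\frac12}$. So it is shorter and needs no operator inversion or adjoint calculus.

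What the Cayley form packages is well-definedness of $\hat U_{n+1}$ for every $\tau_n>0$. This means invertibility of $\mathcal{I}-\frac{\tau_n}{2}\mathcal{A}_{U_n}$, which the paper takes for granted and you simply assume. You can recover it cheaply within your framework.
\begin{itemize}
\item Suppose $(\mathcal{I}-\frac{\tau_n}{2}\mathcal{A}_{U_n})V=0$. Then $\langle V,V\rangle=\frac{\tau_n}{2}\langle V,\mathcal{A}_{U_n}V\rangle$ is both symmetric and skew-symmetric, hence zero, so $V=0$.
\item Every column of $\mathcal{A}_{U_n}V$ lies in $S:=\operatorname{span}\{u_k,\mathcal{G}u_k\}_{k=1}^N$. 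Hence $W:=\hat U_{n+1}-U_n\in S^N$ must solve $(\mathcal{I}-\frac{\tau_n}{2}\mathcal{A}_{U_n})W=\tau_n\mathcal{A}_{U_n}U_n$ on the finite-dimensional, invariant space $S^N$.
\item An injective linear map of a finite-dimensional space to itself is bijective, so $\hat U_{n+1}$ exists and is unique for every $\tau_n>0$.
\end{itemize}
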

 
 Building upon these spectral properties, the following lemma establishes the non-expansive nature of the solution norm.
 \begin{lemma}\label{lem: non-expansion}
 	If the time step satisfies $\tau_n < \frac{2\lambda_1}{\lambda_{\max}(\langle U_n, U_n \rangle)}$, then
 	\begin{equation*}
 		\langle U_{n+1}, U_{n+1} \rangle \leqslant \langle U_n, U_n \rangle.
 	\end{equation*}
 \end{lemma}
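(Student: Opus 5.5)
The proof splits into two stages, mirroring the two stages of \eqref{equ: numerical scheme}: the first stage preserves the Gram matrix exactly, and the second stage can only shrink it under the stated step-size restriction.

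\emph{Stage one (exact conservation).} For any $a,b$ one has the polarization identity
\begin{equation*}
\langle a,a\rangle-\langle b,b\rangle=\Big\langle a-b,\tfrac{a+b}{2}\Big\rangle+\Big\langle \tfrac{a+b}{2},a-b\Big\rangle .
\end{equation*}
I apply it with $a=\hat U_{n+1}$ and $b=U_n$, so that $\tfrac{a+b}{2}=\tilde U_{n+\frac12}$. By the reformulation (\ref{equ: numerical scheme}$*$), $\hat U_{n+1}-U_n=\tau_n\mathcal{A}_{U_n}\tilde U_{n+\frac12}$. The skew-symmetry $\langle V,\mathcal{A}_UV\rangle=-\langle\mathcal{A}_UV,V\rangle$ then makes the right-hand side vanish, so $\langle \hat U_{n+1},\hat U_{n+1}\rangle=\langle U_n,U_n\rangle$. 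This is the equality already used in Lemma~\ref{lem: matrix bounds}. Setting $M:=\langle U_n,U_n\rangle$, it follows that $P:=\langle\hat U_{n+1},\hat U_{n+1}\rangle-I_N=O_n$ satisfies $0\leqslant P\leqslant M$.

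\emph{Stage two (dissipative correction).} I expand $U_{n+1}=\hat U_{n+1}-\tau_n\mathcal{G}\hat U_{n+1}P$ bilinearly, using the symmetry $(\mathcal{G}U,V)=(U,\mathcal{G}V)$. This gives
\begin{equation*}
\langle U_{n+1},U_{n+1}\rangle=M-\tau_n\left(A_nP+PA_n\right)+\tau_n^2PB_nP .
\end{equation*}
It therefore suffices to show $\tau_n^2PB_nP\leqslant\tau_n(A_nP+PA_n)$. By item 3 of Lemma~\ref{lem: matrix bounds}, $PB_nP\leqslant\frac1{\lambda_1}PA_nP$. Writing $c:=\tau_n/\lambda_1$, the target reduces to
\begin{equation*}
cPA_nP\leqslant A_nP+PA_n .
\end{equation*}
This inequality is equivalent, via
\begin{equation*}
c(A_nP+PA_n)-c^2PA_nP=A_n-(I-cP)A_n(I-cP),
\end{equation*}
to the contraction statement
\begin{equation*}
(I-cP)A_n(I-cP)\leqslant A_n .
\end{equation*}
The hypothesis on $\tau_n$ gives $c\lambda_{\max}(P)\leqslant c(\lambda_{\max}(M)-1)<2$. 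Hence $-I< I-cP\leqslant I$, i.e.\ $\|I-cP\|_2<1$.

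\emph{Main obstacle.} The difficulty is that $A_n$ and $P$ need not commute. When they commute, the contraction follows immediately in a common eigenbasis, since $(1-cp_i)^2\leqslant1$. In general, a symmetric contraction $S=I-cP$ need not satisfy $SAS\leqslant A$.

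\emph{First attempt.} I would use the orthogonal invariance of the scheme to replace $\hat U_{n+1}$ by $\hat U_{n+1}Q$, where $Q$ diagonalizes $M$. This makes $P$ diagonal. I would then test the reduced inequality against vectors $x$, in the form
\begin{equation*}
2c\,(\hat U_{n+1}x,\mathcal{G}\hat U_{n+1}Px)\geqslant c^2\|\mathcal{G}\hat U_{n+1}Px\|_a^2 .
\end{equation*}
Here I would exploit the specific structure $P=\langle\hat U_{n+1},\hat U_{n+1}\rangle-I$ together with $\mathcal{G}\leqslant\lambda_1^{-1}$ in $L^2$, to relate $\hat U_{n+1}x$ and $\hat U_{n+1}Px$.

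\emph{Fallback.} If this route fails, I would drop the bound $PB_nP\leqslant\frac1{\lambda_1}PA_nP$ and instead bound the whole correction directly. The idea is to rewrite $\langle U_{n+1},U_{n+1}\rangle$ as $\langle\hat U_{n+1}(\cdot),(I-\tau_n\mathcal{G}(\cdot))\ldots\rangle$ and use that $0\leqslant\tau_n\mathcal{G}$ has $L^2$-norm at most $\tau_n/\lambda_1<2/\lambda_{\max}(M)$.
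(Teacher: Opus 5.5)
\textbf{There is a genuine gap: the step your proof hinges on is false, and so is the lemma as stated.}

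Your Stage one and the expansion $\langle U_{n+1},U_{n+1}\rangle=\langle U_n,U_n\rangle-\tau_n(A_nO_n+O_nA_n)+\tau_n^2O_nB_nO_n$ are correct. You also located the crux precisely: everything rests on $\tau_nO_nB_nO_n\leqslant A_nO_n+O_nA_n$, which you leave as two sketched strategies. This inequality, and with it the lemma in the Loewner order, fails, and the structure $O_n=\langle\hat U_{n+1},\hat U_{n+1}\rangle-I_N$ does not rescue it. Here is a counterexample:
\begin{itemize}
\item Take $L^2$-orthonormal eigenfunctions $v_i,v_j$ of $-\Delta+\mathcal V$ with $\lambda_i\neq\lambda_j$. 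Set $w_\pm=(v_i\pm v_j)/\sqrt2$ and $U_n=(\sqrt2\,w_+,w_-)$.
\item Then $\langle U_n,U_n\rangle=\operatorname{diag}(2,1)$, so $U_n\in\mathcal M^2_{\geqslant}$ and $O_n=\operatorname{diag}(1,0)$.
\item Moreover $\langle U_n,\mathcal G U_n\rangle_{12}=(\lambda_i^{-1}-\lambda_j^{-1})/\sqrt2\neq0$.
\end{itemize}
Reading off entries of your identity gives
\begin{equation*}
\langle U_{n+1},U_{n+1}\rangle-\langle U_n,U_n\rangle=\begin{pmatrix}\ast & -\tau_n(A_n)_{12}\\ -\tau_n(A_n)_{12} & 0\end{pmatrix}.
\end{equation*}
Since $\hat U_{n+1}\to U_n$ as $\tau_n\to0$, we have $(A_n)_{12}\neq0$ for all small $\tau_n$, and all of these satisfy the hypothesis $\tau_n<\lambda_1$. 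A symmetric matrix with a zero diagonal entry and a nonzero off-diagonal entry in the same row is indefinite. Hence $\langle U_{n+1},U_{n+1}\rangle\not\leqslant\langle U_n,U_n\rangle$. The same computation gives $(O_{n+1})_{22}=0\neq(O_{n+1})_{12}$, so even $U_{n+1}\in\mathcal M^2_{\geqslant}$ fails here. The mechanism is the one you named: $A_nO_n+O_nA_n$ need not be positive semidefinite, and the $\tau_n^2$ term vanishes on $\ker O_n$, so it cannot compensate. Neither your rotation attempt nor your fallback can succeed, because this is a counterexample to the statement itself, not just to your reduction.

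\textbf{The paper's proof does not get past this point either.} It identifies $\max_x -x^\top(A_nO_n+O_nA_n)x/x^\top x$, the top eigenvalue of the symmetric part, with $\lambda_{\max}(-2A_nO_n)$. The latter is an eigenvalue of the non-normal product $A_nO_n$, whose spectrum equals that of $A_n^{1/2}O_nA_n^{1/2}\geqslant0$. In the example above the first quantity is positive and the second is $0$.

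\textbf{What your expansion does prove} under the stated hypotheses (including $U_n\in\mathcal M^N_{\geqslant}$) is the trace version. Using $\operatorname{tr}(O_nB_nO_n)\leqslant\lambda_1^{-1}\operatorname{tr}(O_nA_nO_n)\leqslant\lambda_1^{-1}\lambda_{\max}(O_n)\operatorname{tr}(A_nO_n)$ and $\operatorname{tr}(A_nO_n)\geqslant0$, you get
\begin{equation*}
\operatorname{tr}\langle U_{n+1},U_{n+1}\rangle-\operatorname{tr}\langle U_n,U_n\rangle\leqslant\tau_n\operatorname{tr}(A_nO_n)\Big(-2+\frac{\tau_n\lambda_{\max}(O_n)}{\lambda_1}\Big)\leqslant0.
\end{equation*}
The Loewner-order statement needs an additional assumption such as $A_nO_n=O_nA_n$. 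In that case your common-eigenbasis argument completes the proof.
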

 \begin{proof}
 	We first verify the norm preservation during the first step. The scheme \eqref{equ: numerical scheme} admits the equivalent representation
 	\begin{equation*}
 		\left\{\begin{aligned}
 			&\hat{U}_{n+1} = \left( \mathcal{I} - \frac{\tau_n}{2} \mathcal{A}_{U_n} \right)^{-1} \left( \mathcal{I} + \frac{\tau_n}{2} \mathcal{A}_{U_n} \right) U_n,
 			\\
 			&U_{n+1} = \hat{U}_{n+1} - \tau_n \mathcal{G}\hat{U}_{n+1} \left( \langle \hat{U}_{n+1}, \hat{U}_{n+1} \rangle - I_N \right),
 		\end{aligned}\right.
 	\end{equation*}
 	where $\mathcal{I}:\big(H_0^1(\Omega)\big)^N \rightarrow \big(H_0^1(\Omega)\big)^N$ denotes the identity operator, and the first relation corresponds exactly to the Cayley transform of the skew-symmetric operator $\mathcal{A}_{U_n}$.
 	This yields
 	\begin{equation*}
 		\begin{aligned}
 			\left\langle \hat{U}_{n+1}, \hat{U}_{n+1} \right\rangle =  &\left\langle \left(\mathcal{I} -\frac{\tau_n}{2}\mathcal{A}_{U_n}\right)^{-1}\left(\mathcal{I} +\frac{\tau_n}{2}\mathcal{A}_{U_n}\right)  U_n, \left(\mathcal{I} -\frac{\tau_n}{2}\mathcal{A}_{U_n}\right)^{-1}\left(\mathcal{I} +\frac{\tau_n}{2}\mathcal{A}_{U_n}\right) U_n \right\rangle 
 			\\ & =  \left\langle U_n,  \left(\mathcal{I} -\frac{\tau_n}{2}\mathcal{A}_{U_n}\right)\left(\mathcal{I} +\frac{\tau_n}{2}\mathcal{A}_{U_n}\right)^{-1}  \left(\mathcal{I} -\frac{\tau_n}{2}\mathcal{A}_{U_n}\right)^{-1}\left(\mathcal{I} +\frac{\tau_n}{2}\mathcal{A}_{U_n}\right) U_n \right\rangle 
 			\\ & =  \left\langle U_n,  \left(\mathcal{I} -\frac{\tau_n}{2}\mathcal{A}_{U_n}\right)\left(\mathcal{I} -\frac{\tau_n}{2}\mathcal{A}_{U_n}\right)^{-1}  \left(\mathcal{I} +\frac{\tau_n}{2}\mathcal{A}_{U_n}\right)^{-1}\left(\mathcal{I} +\frac{\tau_n}{2}\mathcal{A}_{U_n}\right) U_n \right\rangle 
 			\\& = \left\langle U_n, U_n \right\rangle.
 		\end{aligned}
 	\end{equation*}
 	This invariance directly implies $O_n = \langle \hat{U}_{n+1}, \hat{U}_{n+1} \rangle - I_N$.
 	
 	We next analyze the effect of the second step. Expanding the inner product for the updated state $U_{n+1}$ gives
 	\begin{equation*}
 		\langle U_{n+1}, U_{n+1} \rangle = \langle \hat{U}_{n+1}, \hat{U}_{n+1} \rangle - \tau_n (A_n O_n + O_n A_n) + \tau_n^2 O_n B_n O_n.
 	\end{equation*}
 	An application of Weyl's inequality combined with the spectral bounds from Lemma \ref{lem: matrix bounds} leads to the eigenvalue estimate
 	\begin{align*}
 		\lambda_{\max} \left( \langle U_{n+1}, U_{n+1} \rangle - \langle U_n, U_n \rangle \right) 
 		&\leqslant \lambda_{\max}\left( -\tau_n (A_n O_n + O_n A_n) \right) + \lambda_{\max}\left( \tau_n^2 O_n B_n O_n \right).
 	\end{align*}
 	
 	Observing that the matrix product $O_n B_n O_n$ shares the same non-zero eigenvalues with the symmetric configuration $\left(B_n O_n\right)^{\frac12}O_n \left(B_n O_n\right)^{\frac12}$, we deduce
 	\begin{equation*}
 		\begin{aligned}
 			\lambda_{\max}\left(  O_n B_n O_n \right)  
 			&= \lambda_{\max}\left(  \left(B_n O_n\right)^{\frac12}O_n \left(B_n O_n\right)^{\frac12} \right) 
 			\\& \leqslant   \lambda_{\max}\left( \langle U_n ,U_n\rangle \right) \lambda_{\max}\left(  \left(B_n O_n\right)^{\frac12} \left(B_n O_n\right)^{\frac12} \right) 
 			\\& \leqslant \frac{\lambda_{\max}\left( \langle U_n ,U_n\rangle \right)}{\lambda_1} \lambda_{\max}\left(A_n O_n\right).
 		\end{aligned}
 	\end{equation*}
 	
 	By evaluating the maximum eigenvalue of the symmetric part, we find
 	\begin{equation*}
 		\begin{aligned}
 			\lambda_{\max}\left( -\left(A_n O_n+O_n A_n\right)  \right)  =& \max\limits_{x \in \mathbb{R}^N\backslash\{0\}}\frac{-x^\top\left(A_n O_n+O_n A_n\right) x}{x^\top x} 
 			\\=&  \max\limits_{x \in \mathbb{R}^N\backslash\{0\}}\frac{-2x^\top A_n O_n x}{x^\top x} =  \lambda_{\max}\left( - 2 A_n O_n \right).
 		\end{aligned}
 	\end{equation*}
 	Combining this evaluation with Weyl's inequality produces the bound
 	\begin{equation*}
 		\begin{aligned}
 			\lambda_{\max}\left( \left\langle U_{n+1}, U_{n+1}\right\rangle - \left\langle \hat{U}_{n+1}, \hat{U}_{n+1} \right\rangle \right)
 			& = \lambda_{\max}\left( - \tau_n \left(A_n O_n+O_n A_n\right) +\tau_n^2 O_n B_n O_n \right) 
 			\\& \leqslant \lambda_{\max}\left( - \tau_n \left(A_n O_n+O_n A_n\right)  \right) +\lambda_{\max}\left( \tau_n^2 O_n B_n O_n \right) 
 			\\& \leqslant  \lambda_{\max}\left( \left(- 2\tau_n +\frac{\lambda_{\max}\left( \langle U_n ,U_n\rangle \right)}{\lambda_1} \tau_n^2\right)A_n O_n \right).
 		\end{aligned}
 	\end{equation*}
 	Given that $U_n \in \mathcal{M}_{\geqslant}^N$, the positive semi-definiteness of the matrices $A_n$ and $O_n$ implies $\lambda(A_n O_n) \geqslant 0$. Consequently, to ensure the non-expansion condition $\langle U_{n+1}, U_{n+1} \rangle \leqslant \langle U_n, U_n \rangle$, it is sufficient to impose the restriction
 	\begin{equation*}
 		-2\tau_n +  \frac{\lambda_{\max}(\langle U_n, U_n \rangle)}{\lambda_1} \tau_n^2 < 0,
 	\end{equation*}
 	which is equivalent to the time step constraint $\tau_n < \frac{2\lambda_1}{\lambda_{\max}(\langle U_n, U_n \rangle)}$. This completes the proof.
 \end{proof}
 
With the non-expansive property established, we now state the quasi-orthogonality theorem, which guarantees that the discrete sequence remains inherently within the desired quasi-Stiefel set.
 
 \begin{theorem}\label{thm: stability}
 	Suppose the initial data satisfies $U_0 \in \mathcal{M}_{\geqslant}^N$. If the time step size is chosen such that $\tau_n < \frac{\lambda_1}{2\lambda_{\max}(\langle U_n, U_n \rangle)}$, then the sequence $\{U_n\}_{n=0}^{\infty}$ generated by the scheme \eqref{equ: numerical scheme} satisfies
 	\begin{equation*}
 		U_n \in \mathcal{M}_{\geqslant}^N, \quad \forall\, n \geqslant 0.
 	\end{equation*}
 \end{theorem}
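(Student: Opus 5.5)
The plan is to argue by induction on $n$, with Lemma~\ref{lem: non-expansion} and the matrix bounds of Lemma~\ref{lem: matrix bounds} doing most of the work. The base case is the hypothesis $U_0\in\mathcal{M}_{\geqslant}^N$. For the inductive step, assume $U_n\in\mathcal{M}_{\geqslant}^N$. The proof of Lemma~\ref{lem: non-expansion} already shows that the Cayley stage preserves the Gram matrix, $\langle \hat U_{n+1},\hat U_{n+1}\rangle=\langle U_n,U_n\rangle$. Hence $\hat U_{n+1}\in\mathcal{M}_{\geqslant}^N$ and $O_n=\langle \hat U_{n+1},\hat U_{n+1}\rangle-I_N\geqslant 0$. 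Everything therefore reduces to the correction stage. Using the expansion from that proof, the target is
\begin{equation*}
Y_n\coloneqq\langle U_{n+1},U_{n+1}\rangle-I_N = O_n-\tau_n\,(A_nO_n+O_nA_n)+\tau_n^2\,O_nB_nO_n\geqslant 0 .
\end{equation*}
As a by-product, note that the step restriction $\tau_n<\lambda_1/(2\lambda_{\max}(\langle U_n,U_n\rangle))$ is stronger than the one in Lemma~\ref{lem: non-expansion}. That lemma therefore also gives $\langle U_{n+1},U_{n+1}\rangle\leqslant\langle U_n,U_n\rangle$. This lets the time-step condition be propagated consistently, since $\lambda_{\max}(\langle U_n,U_n\rangle)$ is nonincreasing.

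To prove $Y_n\geqslant 0$, first drop the nonnegative term $\tau_n^2O_nB_nO_n$ (it is PSD because $B_n>0$). Then complete a square: write $O_n-\tau_n(A_nO_n+O_nA_n)=(I_N-\tau_nA_n)O_n(I_N-\tau_nA_n)-\tau_n^2A_nO_nA_n$. The first piece is PSD by congruence. The task is to absorb $\tau_n^2A_nO_nA_n$. Here the quantitative step restriction enters: by Lemma~\ref{lem: matrix bounds}, $\theta\coloneqq\tau_n\lambda_{\max}(A_n)\leqslant \tau_n\lambda_{\max}(\langle U_n,U_n\rangle)/\lambda_1<\tfrac12$. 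Testing against $x\in\mathbb{R}^N$ and setting $y=O_n^{1/2}x$, $z=O_n^{1/2}A_nx$, the quantity to control becomes $\|y\|^2-2\tau_n\, y^\top z$. This will be handled with a weighted Young inequality together with the bound $\theta<\tfrac12$.

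If the purely matrix argument stalls, I will work at the function level instead. For $x\in\mathbb{R}^N$, write $\|U_{n+1}x\|^2=\|\hat U_{n+1}x-\tau_n\mathcal{G}\hat U_{n+1}O_nx\|^2$. Bound the cross term via the $a$-inner product identity $(\hat U x,\mathcal{G}\hat U O x)=(\mathcal{G}\hat Ux,\mathcal{G}\hat UOx)_a$. Then use $\|\mathcal{G}v\|_a^2\leqslant\lambda_1^{-1}\|v\|^2$ together with $\|\hat U_{n+1}x\|^2=\|x\|^2+x^\top O_nx$. The aim is to show $\|U_{n+1}x\|^2-\|x\|^2\geqslant (1-2\theta)\,x^\top O_n x$ or a similar quantity that is nonnegative.

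The main obstacle is this absorption step. The matrices $A_n$ and $O_n$ need not commute, so $A_nO_n+O_nA_n$ is not obviously dominated by a multiple of $O_n$, and Cauchy--Schwarz produces $x^\top A_nO_nA_nx$, which is not directly controlled by $x^\top O_nx$. My first attempt will be to exploit the specific structure $O_n=\langle\hat U_{n+1},\hat U_{n+1}\rangle-I_N$ and $A_n=\langle\hat U_{n+1},\mathcal{G}\hat U_{n+1}\rangle$, both Gram matrices of the same frame. Concretely, I will diagonalize $\langle\hat U_{n+1},\hat U_{n+1}\rangle$ by an orthogonal $Q$ (this is harmless, since the set $\mathcal{M}_{\geqslant}^N$ and the scheme are invariant under $U\mapsto UQ$). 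I will then compare the two quadratic forms column by column. The retained term $\tau_n^2O_nB_nO_n$ is there to compensate if the crude bound loses the factor $2$ hidden in the condition $\tau_n<\lambda_1/(2\lambda_{\max})$.
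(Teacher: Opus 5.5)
\paragraph{Verdict.} The step you flag as the main obstacle cannot be overcome. Under the stated restriction, $Y_n\geqslant 0$ is false in general, so no weighted Young inequality, function-level estimate, or column-by-column comparison will close it.

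\paragraph{Why $Y_n\geqslant 0$ fails.} Take $x\in\ker O_n$. Then $x^\top Y_nx=0$, so $Y_n\geqslant 0$ would force $Y_nx=0$. But $Y_nx=-\tau_nO_nA_nx$. Positivity therefore requires $A_n(\ker O_n)\subseteq\ker O_n$, and nothing in the scheme provides this.

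Concretely, let $N=2$, let $\phi_i,\phi_j$ be $L^2$-normalized eigenfunctions with $\lambda_i\neq\lambda_j$, and set $U_0=\big((\phi_i+\phi_j)/\sqrt2,\ \phi_i-\phi_j\big)$. Then $\langle U_0,U_0\rangle=\operatorname{diag}(1,2)$, so $U_0\in\mathcal{M}_{\geqslant}^N$ and $O_0=\operatorname{diag}(0,1)$. Also $\langle U_0,\mathcal{G}U_0\rangle_{12}=(\lambda_i^{-1}-\lambda_j^{-1})/\sqrt2\neq 0$. Since $\mathcal{A}_{U_0}$ is bounded, $\hat U_1\to U_0$ as $\tau_0\to 0$. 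Hence $b\coloneqq(A_0)_{12}\neq 0$ for all small $\tau_0$, and
\begin{equation*}
O_1=\begin{pmatrix}0&-\tau_0 b\\ -\tau_0 b&1-2\tau_0(A_0)_{22}+\tau_0^2(B_0)_{22}\end{pmatrix},\qquad \det O_1=-\tau_0^2b^2<0 .
\end{equation*}
So $U_1\notin\mathcal{M}_{\geqslant}^N$, although $\tau_0<\lambda_1/4$ satisfies the hypothesis.

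In your test-vector language: with $x=(1,s)^\top$ one has $x^\top O_0x=s^2$, but $\|U_1x\|^2-\|x\|^2=-2\tau_0 bs+O(s^2)$. The cross term is first order in $\|O_0^{1/2}x\|$, while every term you hoped to absorb it with is second order. In particular $\tau_0^2O_0B_0O_0$ vanishes on $\ker O_0$. By continuity the same happens for $\langle U_0,U_0\rangle=\operatorname{diag}(1+\delta,2)$ with small $\delta>0$. Thus even strict positivity of $O_n$ is not propagated by any step restriction that ignores $\lambda_{\min}(O_n)/\lambda_{\max}(O_n)$.

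\paragraph{The paper's proof and Lemma~\ref{lem: non-expansion}.} The paper's proof has exactly this hole. After Weyl's inequality it "simplifies" $\lambda_{\min}(O_n)-\tau_n\lambda_{\max}(A_nO_n+O_nA_n)$ to $\big(1-2\tau_n\lambda_{\max}(\langle U_n,U_n\rangle)/\lambda_1\big)\lambda_{\min}(O_n)$. That would need a bound on $\lambda_{\max}(A_nO_n+O_nA_n)$ by a multiple of $\lambda_{\min}(O_n)$, whereas only $\lambda_{\max}(O_n)$ is available. In the example, $\lambda_{\max}(A_0O_0+O_0A_0)>0=\lambda_{\min}(O_0)$.

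The by-product you import from Lemma~\ref{lem: non-expansion} fails on the same example. There $\langle U_1,U_1\rangle-\langle U_0,U_0\rangle$ has zero $(1,1)$ entry and off-diagonal entry $-\tau_0 b$, so it is indefinite. That lemma's proof identifies $\lambda_{\max}(-(A_nO_n+O_nA_n))$ with $\lambda_{\max}(-2A_nO_n)$, which is invalid for the non-symmetric matrix $A_nO_n$.

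\paragraph{What survives.} Your identity $Y_n=(I_N-\tau_nA_n)O_n(I_N-\tau_nA_n)+\tau_n^2(O_nB_nO_n-A_nO_nA_n)$ is correct. It yields positivity only under an extra hypothesis, such as $A_nO_n=O_nA_n$, or, when $O_n>0$, a step bound involving $\lambda_{\min}(O_n)/\lambda_{\max}(O_n)$. The statement as given needs such a hypothesis or a modified correction stage. In the continuous model, $\langle U,U\rangle-I_N$ evolves by a congruence, which is why Theorem~\ref{thm:U in quasi-Steifel} holds there; the explicit correction step breaks that structure.
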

 \begin{proof}
 	We proceed by mathematical induction. The case $n=0$ holds by assumption. Assuming that $U_n \in \mathcal{M}_{\geqslant}^N$, which means $O_n = \langle U_n, U_n \rangle - I_N \geqslant 0$, it remains to verify that $O_{n+1} \geqslant 0$.
 	
 	Recalling the update equation for the inner product matrix derived in Lemma \ref{lem: non-expansion}, we obtain
 	\begin{equation*}
 		O_{n+1} = O_n - \tau_n (A_n O_n + O_n A_n) + \tau_n^2 O_n B_n O_n.
 	\end{equation*}
 	To establish the positive semi-definiteness of $O_{n+1}$, we lower bound its smallest eigenvalue via Weyl's inequality to find
 	\begin{equation*}
 		\begin{aligned}
 			\lambda_{\min}(O_{n+1}) &\geqslant \lambda_{\min}(O_n) + \lambda_{\min}\left( -\tau_n(A_n O_n + O_n A_n) \right) + \lambda_{\min}(\tau_n^2 O_n B_n O_n)
 			\\
 			&\geqslant \lambda_{\min}(O_n) - \tau_n \lambda_{\max}(A_n O_n + O_n A_n).
 		\end{aligned}
 	\end{equation*}
 	  This estimate can be simplified as
 	\begin{equation*}
 		\lambda_{\min}(O_{n+1}) \geqslant \left( 1 - \frac{2\tau_n \lambda_{\max}(\langle U_n, U_n \rangle)}{\lambda_1} \right) \lambda_{\min}(O_n).
 	\end{equation*}
 	The time step restriction $\tau_n < \frac{\lambda_1}{2\lambda_{\max}(\langle U_n, U_n \rangle)}$ strictly guarantees the positivity condition
 	\begin{equation*}
 		1 - \frac{2\tau_n \lambda_{\max}(\langle U_n, U_n \rangle)}{\lambda_1} > 0.
 	\end{equation*}
 	Since $\lambda_{\min}(O_n) \geqslant 0$ according to the induction hypothesis, it immediately follows that $\lambda_{\min}(O_{n+1}) \geqslant 0$, ensuring $U_{n+1} \in \mathcal{M}_{\geqslant}^N$. By mathematical induction, the quasi-orthogonality holds for all $n \geqslant 0$.
 \end{proof}
 
 \subsection{Energy decay}
 
 In this subsection, we establish the energy dissipation property of the proposed numerical scheme. To facilitate the subsequent analysis, we first derive an upper bound for the skew-symmetric operator $\mathcal{A}_U$.
 
 \begin{lemma}\label{lem: operator bound}
 	For any $U \in \big(H_0^1(\Omega)\big)^N$, the operator $\mathcal{A}_U$ satisfies
 	\begin{equation*}
 		\|\mathcal{A}_U\|_a \leqslant \frac{1}{\lambda_1} \sqrt{2E(U) \lambda_{\max}(\langle U, U \rangle)}.
 	\end{equation*}
 \end{lemma}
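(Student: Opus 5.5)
The plan is to bound $\|\mathcal{A}_U V\|_a$ by duality, i.e.\ to estimate $(\mathcal{A}_U V, W)_a$ for arbitrary $V, W \in \big(H_0^1(\Omega)\big)^N$ and take the supremum over $\|W\|_a = 1$. The main tool is the identity $(\mathcal{G}X, Y)_a = (X, Y)$ recalled before Theorem \ref{expression of solution}, which moves $\mathcal{G}$ off the $a$-inner product. Splitting $\mathcal{A}_U V = \mathcal{G}U\langle U, V\rangle - U\langle U, \mathcal{G}V\rangle$, I expect
\begin{equation*}
(\mathcal{A}_U V, W)_a = \operatorname{tr}\big(\langle U, V\rangle^\top \langle U, W\rangle\big) - \operatorname{tr}\big(\langle U, \mathcal{G}V\rangle^\top \langle U, W\rangle_a\big),
\end{equation*}
so that each term factors through an $N\times N$ Gram-type matrix of $U$ against $V$ or $W$.

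Each trace is then bounded by Cauchy--Schwarz in matrix form. For the second term, use $\|\langle U, W\rangle_a\|_F \leqslant \|U\|_a \|W\|_a = \sqrt{2E(U)}\,\|W\|_a$; this is where the factor $\sqrt{2E(U)}$ in the statement comes from. For the first term, use $\|\langle U, X\rangle\|_F \leqslant \sqrt{\lambda_{\max}(\langle U, U\rangle)}\,\|X\|$ for $X = V$ and for $X = \mathcal{G}V$; this is where $\sqrt{\lambda_{\max}(\langle U, U\rangle)}$ comes from. The remaining $L^2$ norms are converted to $a$-norms using the elementary bounds
\begin{equation*}
\|X\| \leqslant \lambda_1^{-1/2}\|X\|_a, \qquad \|\mathcal{G}V\|_a \leqslant \lambda_1^{-1}\|V\|_a, \qquad \|\mathcal{G}V\| \leqslant \lambda_1^{-1}\|V\|.
\end{equation*}
The middle bound follows from $(\mathcal{G}V, W)_a = (V, W) \leqslant \|V\|\,\|W\|$. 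These estimates are the same kind used in Lemma \ref{lem: Local Lipschitz property} and Lemma \ref{lem: matrix bounds}.

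The step I expect to be the main obstacle is bookkeeping: the two terms should collapse into the single constant $\frac{1}{\lambda_1}\sqrt{2E(U)\lambda_{\max}(\langle U, U\rangle)}$. The first term naturally yields $\lambda_{\max}(\langle U, U\rangle)$ rather than the square root of $2E(U)\lambda_{\max}(\langle U, U\rangle)$. To convert it, I would use the Rayleigh quotient bound
\begin{equation*}
2E(U) = \operatorname{tr}\langle U, U\rangle_a \geqslant \lambda_1 \operatorname{tr}\langle U, U\rangle \geqslant \lambda_1 \lambda_{\max}(\langle U, U\rangle),
\end{equation*}
which gives $\lambda_{\max}(\langle U, U\rangle) \leqslant \sqrt{\lambda_{\max}(\langle U, U\rangle)\,2E(U)/\lambda_1}$.

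If the two contributions still produce a factor of $2$ or mismatched powers of $\lambda_1$, my first fallback is to avoid estimating the terms separately. Instead, I would write $\mathcal{A}_U V$ as a single expression and use the skew-symmetry $\langle V, \mathcal{A}_U V\rangle = -\langle \mathcal{A}_U V, V\rangle$, so that cross terms cancel and only one Cauchy--Schwarz step remains. Failing that, I would accept a constant of the same form up to an absolute factor, since only the structure $E(U)$, $\lambda_{\max}(\langle U, U\rangle)$, $\lambda_1$ is used later in the energy decay analysis.
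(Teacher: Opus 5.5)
Your duality route has a genuine gap, and the problem is not bookkeeping. With $\|V\|_a=\|W\|_a=1$, your estimates bound the second term by $\sqrt{\lambda_{\max}(\langle U,U\rangle)}\,\|\mathcal{G}V\|\,\|U\|_a\leqslant\lambda_1^{-3/2}\sqrt{2E(U)\lambda_{\max}(\langle U,U\rangle)}$, because $\|\mathcal{G}V\|\leqslant\lambda_1^{-1}\|V\|\leqslant\lambda_1^{-3/2}\|V\|_a$. They bound the first term by $\lambda_1^{-1}\lambda_{\max}(\langle U,U\rangle)$, which your Rayleigh conversion turns into the same $\lambda_1^{-3/2}\sqrt{2E(U)\lambda_{\max}(\langle U,U\rangle)}$. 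Carried out, your argument therefore proves the true bound $2\lambda_1^{-3/2}\sqrt{2E(U)\lambda_{\max}(\langle U,U\rangle)}$ for the operator norm on $\big((H_0^1(\Omega))^N,\|\cdot\|_a\big)$. The surplus factor $\lambda_1^{-1/2}$ cannot be removed, either by your skew-symmetry fallback (that identity concerns the $L^2$ pairing, not $(\cdot,\cdot)_a$) or by any other argument, because for this operator norm the stated inequality is false. Take $N=1$, $\Omega=(0,L)$, $a(u,v)=\int_\Omega u'v'\,dx$, and $L^2$-normalized eigenfunctions $\phi_1,\phi_2$ with eigenvalues $\mu_k=(k\pi/L)^2$. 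Let $u=\phi_1+\phi_2\in\mathcal{M}_{\geqslant}^{1}$. Then $\mathcal{A}_u\phi_1=(\mu_2^{-1}-\mu_1^{-1})\phi_2$, so $\|\mathcal{A}_u\phi_1\|_a/\|\phi_1\|_a=3L^2/(2\pi^2)$. The right-hand side of the lemma equals $\sqrt{10}\,L/\pi$, so the inequality fails for $L>2\sqrt{10}\,\pi/3$, and the ratio grows linearly in $L$. Your final fallback is therefore unavailable as well: the discrepancy is a factor $\lambda_1^{-1/2}$, not an absolute constant.

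The paper's proof measures the input in $L^2$. Write $\mathcal{A}_U$ column-wise as $T=T_1-T_2$, with $T_1v=\sum_i\mathcal{G}u_i\,(u_i,v)$ and $T_2v=\sum_iu_i\,(\mathcal{G}u_i,v)$. The paper's identity $\|\mathcal{A}_U\|_a^2=\operatorname{tr}\big(\langle\mathcal{G}U,\mathcal{G}U\rangle\langle U,U\rangle_a-\langle\mathcal{G}U,\mathcal{G}U\rangle_a\langle U,U\rangle\big)$ is the squared Hilbert--Schmidt norm of $T$ as a map from $L^2(\Omega)$ into $(H_0^1(\Omega),\|\cdot\|_a)$. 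Hence what is actually proved is $\|\mathcal{A}_UV\|_a\leqslant\lambda_1^{-1}\sqrt{2E(U)\lambda_{\max}(\langle U,U\rangle)}\,\|V\|$. The constant comes out exactly because of the cancellation $\langle T_1,T_2\rangle_{\mathrm{HS}}=\|T_1\|_{\mathrm{HS}}^2=\operatorname{tr}(\langle U,U\rangle\langle U,\mathcal{G}U\rangle)$. This gives $\|T\|_{\mathrm{HS}}^2=\|T_2\|_{\mathrm{HS}}^2-\|T_1\|_{\mathrm{HS}}^2\leqslant\operatorname{tr}(\langle U,U\rangle_a\langle U,\mathcal{G}^2U\rangle)\leqslant\lambda_1^{-2}\lambda_{\max}(\langle U,U\rangle)\,2E(U)$. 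If you rerun your duality with $\|V\|$ in place of $\|V\|_a$ (keeping $\|W\|_a=1$), each of your two terms is bounded by $\lambda_1^{-1}\sqrt{2E(U)\lambda_{\max}(\langle U,U\rangle)}\,\|V\|$. You then get the right scaling but an extra factor $2$. Removing that $2$ is exactly what the Hilbert--Schmidt cancellation does, and it is what a term-by-term triangle inequality throws away. Where the paper later applies the lemma with $\|\cdot\|_a$ on the argument, in the discrete energy-decay estimate, the extra factor $\lambda_1^{-1/2}$ must be paid, as in your version.
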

 \begin{proof}
 	We estimate the operator norm directly via its trace representation. Specifically,
 	\begin{align*}
 		\|\mathcal{A}_U\|_a^2 &= (\mathcal{A}_U, \mathcal{A}_U)_a 
 		\\
 		&= \operatorname{tr}\left( \langle \mathcal{G}U, \mathcal{G}U \rangle \langle U, U \rangle_a - \langle \mathcal{G}U, \mathcal{G}U \rangle_a \langle U, U \rangle \right).
 	\end{align*}
 	Utilizing the spectral properties of the inverse operator $\mathcal{G}$, we obtain the upper bound
 	\begin{align*}
 		\|\mathcal{A}_U\|_a^2 &\leqslant \operatorname{tr}\left( \langle U, \mathcal{G}^2 U \rangle \langle U, U \rangle_a \right)
 		\\
 		&\leqslant \frac{1}{\lambda_1^2} \lambda_{\max}(\langle U, U \rangle) \operatorname{tr}(\langle U, U \rangle_a)
 		\\
 		&= \frac{2 E(U) \lambda_{\max}(\langle U, U \rangle)}{\lambda_1^2}.
 	\end{align*}
 	Taking the square root yields the desired inequality.
 \end{proof}
 
 Equipped with this operator bound, we state and prove the primary energy decay theorem for the discrete trajectory.
 
 \begin{theorem}\label{thm: Energy E(Un) decay}
 	Let $c_e$ be a constant depending only on the initial data $U_0$, defined by
 	\begin{equation*}
 		c_e \coloneqq 2\left(	 \frac{\sqrt{2}E(U_0)}{\lambda_1} +c_{\Omega}^2\sqrt{E(U_0)\lambda_{\max}\left( \left\langle U_0, U_0 \right\rangle\right)} \right)   \sqrt{N E(U_0) \lambda_{\max}(\langle U_0, U_0 \rangle)} +\frac{1}{2}.
 	\end{equation*}
 	If the time step satisfies the condition
 	\begin{equation*}
 		\tau_n < \min\left( \frac{\lambda_1}{{2}c_e \lambda_{\max}(\langle U_0, U_0 \rangle)}, \frac{\lambda_1}{{2}\sqrt{2E(U_0) \lambda_{\max}(\langle U_0, U_0 \rangle)}} \right),  
 	\end{equation*}
 	then the numerical scheme \eqref{equ: numerical scheme} is energy dissipative. Specifically, for all $n \geqslant 0$, there holds
 	\begin{equation*}
 		E(U_n) - E(U_{n+1}) \geqslant \left( \frac{\lambda_1}{{2} \lambda_{\max}(\langle U_0, U_0 \rangle)} - c_e \tau_n \right) \tau_n \|\mathcal{A}_{U_n}U_n\|_a^2.
 	\end{equation*}
 \end{theorem}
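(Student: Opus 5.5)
The plan is an induction on $n$. The hypothesis is that $E(U_k)\leqslant E(U_0)$ and $\lambda_{\max}(\langle U_k,U_k\rangle)\leqslant\lambda_{\max}(\langle U_0,U_0\rangle)$ for $k\leqslant n$; the second bound will follow from Lemma \ref{lem: non-expansion} and Theorem \ref{thm: stability}, since the stated step bound is smaller than both of their thresholds. Under this hypothesis every constant depending on $U_n$ can be replaced by its value at $U_0$. The energy difference is then split along the two stages:
$$E(U_n)-E(U_{n+1}) = \big[E(U_n)-E(\hat U_{n+1})\big] + \big[E(\hat U_{n+1})-E(U_{n+1})\big].$$

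\textbf{First (Cayley) stage.} Write $\hat U_{n+1}-U_n=\tau_n\mathcal{A}_{U_n}\tilde U_{n+\frac12}$. Since $E(U_n)-E(\hat U_{n+1}) = -(\tilde U_{n+\frac12},\hat U_{n+1}-U_n)_a$, this gives
$$E(U_n)-E(\hat U_{n+1}) = -\tau_n(\tilde U_{n+\frac12},\mathcal{A}_{U_n}\tilde U_{n+\frac12})_a .$$
For the $a$-form, the key identity is $(V,\mathcal{A}_UV)_a=\operatorname{tr}(\langle V,U\rangle\langle U,V\rangle)-\operatorname{tr}(\langle V,U\rangle_a\langle U,\mathcal{G}V\rangle)$, using $(V,\mathcal{G}W)_a=(V,W)$. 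At $V=U$ this equals $\operatorname{tr}(\langle U,U\rangle^2)-\operatorname{tr}(\langle U,U\rangle_a\langle U,\mathcal{G}U\rangle)$. One checks that $-(U_n,\mathcal{A}_{U_n}U_n)_a$ dominates $c\,\|\mathcal{A}_{U_n}U_n\|_a^2$ with $c=\lambda_1/\lambda_{\max}(\langle U_0,U_0\rangle)$. This should follow from a Cauchy--Schwarz computation analogous to \eqref{eq:energy time derivative}, with $\langle\mathcal{G}U,U\rangle^{-1}\geqslant\lambda_1/\lambda_{\max}$, together with the quasi-orthogonality $O_n\geqslant0$ to control the sign of the trace remainder. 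Replacing $U_n$ by $\tilde U_{n+\frac12}=U_n+\frac{\tau_n}{2}\mathcal{A}_{U_n}\tilde U_{n+\frac12}$ produces perturbations of size $\tau_n\|\mathcal{A}_{U_n}\|_a\|\tilde U\|_a\|\mathcal{A}_{U_n}U_n\|_a$. Lemma \ref{lem: operator bound} bounds $\|\mathcal{A}_{U_n}\|_a$ by $\frac1{\lambda_1}\sqrt{2E(U_0)\lambda_{\max}}$. The second step constraint makes $\frac{\tau_n}{2}\|\mathcal{A}_{U_n}\|_a<\frac12$, so $\|\tilde U_{n+\frac12}-U_n\|_a\leqslant\tau_n\|\mathcal{A}_{U_n}U_n\|_a$ by a Neumann-series argument, which yields $\tilde U-U_n=O(\tau_n)\mathcal{A}_{U_n}U_n$. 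The resulting cross terms are $O(\tau_n^2)\|\mathcal{A}_{U_n}U_n\|_a^2$, with constants made of $E(U_0)/\lambda_1$, $c_\Omega^2\sqrt{E\lambda_{\max}}$ and $\sqrt{NE\lambda_{\max}}$; these factors assemble into $c_e-\frac12$.

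\textbf{Second (orthogonality-correction) stage.} Expanding gives
$$E(\hat U)-E(U_{n+1})=\tau_n\operatorname{tr}\big(O_n\langle\hat U,\hat U\rangle\big)-\frac{\tau_n^2}{2}\operatorname{tr}(O_nB_nO_n),$$
using $(\hat U,\mathcal{G}\hat U O_n)_a=\operatorname{tr}(\langle\hat U,\hat U\rangle O_n)$ and $\langle\hat U,\hat U\rangle=\langle U_n,U_n\rangle$. Since $O_n\geqslant0$, $\langle U_n,U_n\rangle\geqslant I$ and $B_n\leqslant\lambda_{\max}/\lambda_1^2$, this term is nonnegative under the step bound, so it can be dropped. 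The stated constant contains an extra $\frac12$, which suggests the authors instead lose $\frac{\tau_n^2}{2}\|\mathcal{A}_{U_n}U_n\|_a^2$ here through Young's inequality when coupling the two stages.

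\textbf{Main obstacle.} The hard step is the coercivity estimate $-(U,\mathcal{A}_UU)_a\geqslant\frac{\lambda_1}{2\lambda_{\max}}\|\mathcal{A}_UU\|_a^2$ for non-orthogonal $U$. The trace identity involves $\langle U,U\rangle^2$ rather than $\langle U,U\rangle$, and the sign of the remainder must be extracted from $O_n\geqslant0$. I would first try writing $\mathcal{A}_UU=\mathcal{G}U\langle U,U\rangle-U\langle U,\mathcal{G}U\rangle$ and mimicking the proof of Theorem \ref{thm:energy decay} with $\langle U,\mathcal{G}U\rangle^{-1}$ inserted. Losing a factor $\frac12$ in this comparison accounts for the $2$ in the denominator of the stated rate.
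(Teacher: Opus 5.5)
\textbf{The gap is the coercivity estimate} $-(U_n,\mathcal{A}_{U_n}U_n)_a\geqslant\frac{\lambda_1}{2\lambda_{\max}(\langle U_n,U_n\rangle)}\|\mathcal{A}_{U_n}U_n\|_a^2$. It carries all of the dissipation, and the route you suggest for it cannot work. Write $S=\langle U,U\rangle$, $G=\langle U,\mathcal{G}U\rangle$, $O=S-I_N$. Mimicking Theorem~\ref{thm:energy decay} with the splitting $U=-\mathcal{A}_UU\,G^{-1}+\mathcal{G}U\,SG^{-1}$ gives exactly
\[
-(U,\mathcal{A}_UU)_a=(\mathcal{A}_UU\,G^{-1},\mathcal{A}_UU)_a-\tfrac12\big\|G^{-1/2}SG^{1/2}-G^{1/2}SG^{-1/2}\big\|^2 .
\]
The literal splitting $U=-\nabla_GE_{\mathcal{G}}(U)G^{-1}+\mathcal{G}UG^{-1}$ gives $-(U,\mathcal{A}_UU)_a=(\nabla_GE_{\mathcal{G}}(U)G^{-1},\nabla_GE_{\mathcal{G}}(U))_a-\operatorname{tr}(O^2)$. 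In both cases the remainder is nonpositive for every symmetric $S$, so $O_n\geqslant0$ cannot fix its sign. In Theorem~\ref{thm:energy decay} the remainder was $-\operatorname{tr}(O)$, which had the favourable sign there. These identities therefore bound $-(U,\mathcal{A}_UU)_a$ from above, not below.

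The missing idea is the symmetric factorization through $\mathcal{G}^{\pm1/2}$. With $X=\mathcal{G}^{1/2}U_n$ and $Y=\mathcal{G}^{-1/2}U_n$ one has $\langle X,X\rangle=G$, $\langle Y,Y\rangle=\langle U_n,U_n\rangle_a$ and $\langle X,Y\rangle=S$, hence
\begin{align*}
-(\mathcal{A}_{U_n}U_n,U_n)_a&=\|XY^\top\|^2-\operatorname{tr}\big((XY^\top)^2\big)=\tfrac12\|XY^\top-YX^\top\|^2,\\
\|\mathcal{A}_{U_n}U_n\|_a^2&=\|(XY^\top-YX^\top)X\|^2\leqslant\|XY^\top-YX^\top\|^2\,\lambda_{\max}(G),
\end{align*}
together with $\lambda_{\max}(G)\leqslant\lambda_{\max}(S)/\lambda_1$. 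This holds for every $U_n$ without any quasi-orthogonality. The factor $\frac12$ is the identity $\|K\|^2-\operatorname{tr}(K^2)=\frac12\|K-K^*\|^2$, not a loss in an estimate.

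\textbf{A second gap is in the cross terms.} Your stage splitting, the induction via Lemma~\ref{lem: non-expansion} and Theorem~\ref{thm: stability}, the identity $E(U_n)-E(\hat U_{n+1})=-\tau_n(\tilde U_{n+\frac12},\mathcal{A}_{U_n}\tilde U_{n+\frac12})_a$, and the Neumann-series bound $\|\tilde U_{n+\frac12}-U_n\|_a\leqslant\tau_n\|\mathcal{A}_{U_n}U_n\|_a$ are all fine. However, replacing $\tilde U_{n+\frac12}$ by $U_n$ leaves the term $-\tau_n(U_n,\mathcal{A}_{U_n}(\tilde U_{n+\frac12}-U_n))_a$. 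The bound you quote for it, $\tau_n\|U_n\|_a\|\mathcal{A}_{U_n}\|_a\|\tilde U_{n+\frac12}-U_n\|_a$, is only \emph{linear} in $\|\mathcal{A}_{U_n}U_n\|_a$. A term $C\tau_n^2\|\mathcal{A}_{U_n}U_n\|_a$ cannot be absorbed into $\tau_n\|\mathcal{A}_{U_n}U_n\|_a^2$ once $\|\mathcal{A}_{U_n}U_n\|_a\lesssim\tau_n$. Moreover, $\mathcal{A}_{U_n}$ is skew only for the $L^2$ pairing, so you cannot transpose it inside $a(\cdot,\cdot)$.

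The paper obtains the second factor by inserting the decomposition $U_n=-\mathcal{A}_{U_n}U_n\,G^{-1}+\mathcal{G}U_n\,SG^{-1}$ into this pairing. The first piece is already $O(\|\mathcal{A}_{U_n}U_n\|_a)$; this is the paper's $I_1$. On the second piece, $(\,\cdot\,,\mathcal{G}U_nM)_a=(\,\cdot\,,U_nM)$ is an $L^2$ pairing, where skew-symmetry moves $\mathcal{A}_{U_n}$ onto $U_n$. Up to sign this yields $(\tilde U_{n+\frac12}-U_n,\mathcal{A}_{U_n}U_n\,SG^{-1})$; this is $I_2$, and the Poincar\'e step there produces the $c_\Omega^2$ in $c_e$.

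There are also two minor slips. First, $\|\mathcal{G}\hat U_{n+1}O_n\|_a^2=\operatorname{tr}(O_nA_nO_n)$, not $\operatorname{tr}(O_nB_nO_n)$; your second-stage conclusion still holds using $A_n\leqslant\lambda_{\max}/\lambda_1$ and $O_n\leqslant S$. Second, the $\frac12$ in $c_e$ comes from the first-stage term $\frac{\tau_n^2}{2}\|\mathcal{A}_{U_n}\tilde U_{n+\frac12}\|_a^2$, not from the second stage, which is simply discarded as nonnegative.
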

 
 \begin{proof}
 	We proceed by mathematical induction. Assuming the hypothesis holds for the $n$-th step such that $E(U_n) \leqslant E(U_0)$ and $U_n \in \mathcal{M}_{\geqslant}^N$, we aim to show that $E(U_{n+1}) \leqslant E(U_n)$.
 	
 	The total energy variation is naturally decomposed into contributions arising from the first and second steps of \eqref{equ: numerical scheme}, taking the form
 	\begin{equation*}
 		E(U_n) - E(U_{n+1}) = \left( E(U_n) - E(\hat{U}_{n+1}) \right) + \left( E(\hat{U}_{n+1}) - E(U_{n+1}) \right).
 	\end{equation*}
 	
 	We first analyze the energy difference associated with the first step of \eqref{equ: numerical scheme}. Expanding the inner product yields
 	\begin{equation}\label{eq: energy difference}
 		\begin{aligned}
 			E(U_n) - E(\hat{U}_{n+1}) &= \frac{1}{2} \left( U_n - \hat{U}_{n+1}, U_n + \hat{U}_{n+1} \right)_a
 			\\
 			&= \frac{1}{2} \left( -\tau_n \mathcal{A}_{U_n}\tilde{U}_{n+\frac{1}{2}}, 2U_n +\tau_n\mathcal{A}_{U_n}\tilde{U}_{n+\frac{1}{2}} \right)_a
 			\\ & = -\tau_n \left( \mathcal{A}_{U_n}\left(\tilde{U}_{n+\frac{1}{2}}-U_n\right), U_n \right)_a -\tau_n \left( \mathcal{A}_{U_n}U_n, U_n \right)_a -\frac{\tau_n^2}{2} \left\|\mathcal{A}_{U_n}\tilde{U}_{n+\frac{1}{2}}\right\|_a^2.
 		\end{aligned}
 	\end{equation}
 	The first term on the right-hand side is further decomposed by
 	\begin{equation*}
 		\begin{aligned}
 			\left( \mathcal{A}_{U_n}\left(\tilde{U}_{n+\frac{1}{2}}-U_n\right), U_n \right)_a  
 			=& \left( \mathcal{A}_{U_n}\left(\tilde{U}_{n+\frac{1}{2}}-U_n\right), U_n -\mathcal{G}U_n \left\langle U_n, U_n \right\rangle \left\langle \mathcal{G}U_n, U_n \right\rangle^{-1} \right)_a 
 			\\&+ \left( \mathcal{A}_{U_n}\left(\tilde{U}_{n+\frac{1}{2}}-U_n\right), \mathcal{G}U_n \left\langle U_n, U_n \right\rangle \left\langle \mathcal{G}U_n, U_n \right\rangle^{-1} \right)_a 
 			\\=&  \underbrace{\left( \mathcal{A}_{U_n}\left(\tilde{U}_{n+\frac{1}{2}}-U_n\right), \mathcal{A}_{U_n}U_n\left\langle \mathcal{G}U_n, U_n \right\rangle^{-1} \right)_a }_{I_1}
 			\\&+ \underbrace{ \left( \mathcal{A}_{U_n}\left(\tilde{U}_{n+\frac{1}{2}}-U_n\right), U_n \left\langle U_n, U_n \right\rangle \left\langle \mathcal{G}U_n, U_n \right\rangle^{-1} \right) }_{I_2}.
 		\end{aligned}
 	\end{equation*}
 For the term $I_1$, we deduce
 	\begin{equation*}
 		\begin{aligned}
 			|I_1| \leqslant &\sqrt{N} \lambda_{\max}\left( \left\langle \mathcal{G}U_n, U_n \right\rangle^{-1}\right)\left\|\mathcal{A}_{U_n}\left(\tilde{U}_{n+\frac{1}{2}}-U_n\right)\right\|_a \left\| \mathcal{A}_{U_n}U_n\right\|_a 
 			\\ \leqslant & 2E(U_n)\sqrt{N} \left\|\mathcal{A}_{U_n}\left(\tilde{U}_{n+\frac{1}{2}}-U_n\right)\right\|_a \left\| \mathcal{A}_{U_n}U_n\right\|_a 
 			\\ \leqslant & 2E(U_n)\sqrt{N}\left\|\mathcal{A}_{U_n}\right\|_a \left\|\tilde{U}_{n+\frac{1}{2}}-U_n\right\|_a \left\| \mathcal{A}_{U_n}U_n\right\|_a 
 			\\ \leqslant & \frac{2E(U_n) \sqrt{2NE(U_n)\lambda_{\max}\left( \left\langle U_n, U_n \right\rangle\right)}}{\lambda_1}\tau_n \cdot \left\|\mathcal{A}_{U_n}\tilde{U}_{n+\frac{1}{2}}\right\|_a  \left\| \mathcal{A}_{U_n}U_n\right\|_a. 
 		\end{aligned}
 	\end{equation*}
 	Similarly, the second term $I_2$ is bounded by
 	\begin{equation*}
 		\begin{aligned}
 			|I_2| = &\left| \left(\tilde{U}_{n+\frac{1}{2}}-U_n, - \mathcal{A}_{U_n}U_n \left\langle U_n, U_n \right\rangle \left\langle \mathcal{G}U_n, U_n \right\rangle^{-1} \right)  \right|
 			\\= &\left|\frac{\tau_n}{2} \left(\mathcal{A}_{U_n}\tilde{U}_{n+\frac{1}{2}},  \mathcal{A}_{U_n}U_n \left\langle U_n, U_n \right\rangle \left\langle \mathcal{G}U_n, U_n \right\rangle^{-1} \right)  \right|
 			\\\leqslant  &2E(U_n)\sqrt{N} \lambda_{\max}\left( \left\langle U_n, U_n \right\rangle\right) \tau_n\cdot \left\|\mathcal{A}_{U_n}\tilde{U}_{n+\frac{1}{2}}\right\| \left\| \mathcal{A}_{U_n}U_n \right\|
 			\\ \leqslant &c_{\Omega}^22E(U_n)\sqrt{N} \lambda_{\max}\left( \left\langle U_n, U_n \right\rangle\right) \tau_n\cdot \left\|\mathcal{A}_{U_n}\tilde{U}_{n+\frac{1}{2}}\right\|_a \left\| \mathcal{A}_{U_n}U_n \right\|_a.
 		\end{aligned}
 	\end{equation*}
 	
 	{
 		 To estimate the leading term, define $X = \mathcal{G}^{1/2} U_n$ and $Y = \mathcal{G}^{-1/2} U_n$. 
 		 Expanding the inner products yields
 		 $$-(\mathcal{A}_{U_n} U_n, U_n)_a = -(\mathcal{G}U_n  \langle U_n, U_n \rangle  - U_n  \langle \mathcal{G}U_n, U_n \rangle , U_n)_a = \operatorname{tr}( \langle \mathcal{G}U_n, U_n \rangle \cdot \langle U_n, U_n \rangle_a ) - \operatorname{tr}( \langle U_n, U_n \rangle^2)$$Substituting $X$ and $Y$, we find 
 		 \begin{equation*}
 		 	\begin{aligned}
 		 		-(\mathcal{A}_{U_n} U_n, U_n)_a = \|X Y^\top\|^2 - \operatorname{tr}((X Y^\top)^2) = \frac{1}{2} \|X Y^\top - Y X^\top\|^2.
 		 	\end{aligned}
 		 \end{equation*}
 		 Similarly,
 		 \begin{equation*}
 		 	\begin{aligned}
 		 		\|\mathcal{A}_{U_n} U_n\|_a^2 = \|(X Y^\top - Y X^\top) X\|^2.
 		 	\end{aligned}
 		 \end{equation*}
 	Thus, we can estimate as
 	$$\|\mathcal{A}_{U_n} U_n\|_a^2 \leqslant \|X Y^\top - Y X^\top\|^2 \lambda_{\max}(X^\top X) = 2 \left( -(\mathcal{A}_{U_n} U_n, U_n)_a \right) \lambda_{\max}(\left\langle \mathcal{G}U_n, U_n \right\rangle)$$
 	Rearranging this relationship yields
 	$$-(\mathcal{A}_{U_n} U_n, U_n)_a \geqslant \frac{\lambda_1}{2 \lambda_{\max}(\langle U_n, U_n \rangle)} \|\mathcal{A}_{U_n} U_n\|_a^2.$$
 	}

 	Applying the triangle inequality to the discrete differences establishes the following bounds
 	\begin{equation*}
 		\begin{aligned}
 			\left\|\mathcal{A}_{U_n}\tilde{U}_{n+\frac{1}{2}}\right\|_a
 			\leqslant & \left\|\mathcal{A}_{U_n}\left(\tilde{U}_{n+\frac{1}{2}} - U_n\right) \right\|_a  +\left\|  \mathcal{A}_{U_n}U_n\right\|_a
 			\\ \leqslant & \left\|\mathcal{A}_{U_n}\right\|_a \left\|\tilde{U}_{n+\frac{1}{2}} - U_n \right\|_a  +\left\|  \mathcal{A}_{U_n}U_n\right\|_a
 			\\ \leqslant & \frac{ \sqrt{ 2E(U_n)\lambda_{\max}\left( \left\langle U_n, U_n \right\rangle\right) }}{2\lambda_1}\tau_n \cdot \left\|\mathcal{A}_{U_n}\tilde{U}_{n+\frac{1}{2}}\right\|_a +\left\|  \mathcal{A}_{U_n}U_n\right\|_a
 		\end{aligned}
 	\end{equation*}
 	and
 	\begin{equation*}
 		\begin{aligned}
 			\left\|\mathcal{A}_{U_n}U_n\right\|_a
 			\leqslant & \left\|\mathcal{A}_{U_n}\left(\tilde{U}_{n+\frac{1}{2}} - U_n\right) \right\|_a  +\left\|  \mathcal{A}_{U_n}\tilde{U}_{n+\frac{1}{2}}\right\|_a
 			\\ \leqslant & \left\|\mathcal{A}_{U_n}\right\|_a \left\|\tilde{U}_{n+\frac{1}{2}} - U_n \right\|_a  +\left\|  \mathcal{A}_{U_n}\tilde{U}_{n+\frac{1}{2}}\right\|_a
 			\\ \leqslant & \left(1+\frac{ \sqrt{ 2E(U_n)\lambda_{\max}\left( \left\langle U_n, U_n \right\rangle\right) }}{2\lambda_1}\tau_n \right)\cdot \left\|\mathcal{A}_{U_n}\tilde{U}_{n+\frac{1}{2}}\right\|_a. 
 		\end{aligned}
 	\end{equation*}
 	Therefore, for the sufficiently small time step $\tau_n$, these inequalities guarantee the norm equivalence
 	\begin{equation*}
 		\frac{2}{3} \|\mathcal{A}_{U_n}U_n\|_a \leqslant \|\mathcal{A}_{U_n}\tilde{U}_{n+\frac{1}{2}}\|_a \leqslant 2 \|\mathcal{A}_{U_n}U_n\|_a.
 	\end{equation*}
 	Substituting these estimates back into \eqref{eq: energy difference} for the first step yields the lower bound
 	\begin{equation*}
 		E(U_n) - E(\hat{U}_{n+1}) \geqslant \left( \frac{\lambda_1}{{2}\lambda_{\max}(\langle U_0, U_0 \rangle)} - c_e \tau_n \right) \tau_n \|\mathcal{A}_{U_n}U_n\|_a^2.
 	\end{equation*}
 	
 	Next, we evaluate the energy variation induced by the second step of \eqref{equ: numerical scheme}. Direct expansion of the energy difference provides
 	\begin{equation*}
 		\begin{aligned}
 			E(U_{n+1}) - E(\hat{U}_{n+1}) &= \frac{1}{2} (U_{n+1} - \hat{U}_{n+1}, U_{n+1} + \hat{U}_{n+1})_a
 			\\
 			&= \frac{1}{2} \left( -\tau_n \mathcal{G}\hat{U}_{n+1}O_n, 2\hat{U}_{n+1} - \tau_n \mathcal{G}\hat{U}_{n+1}O_n \right)_a
 			\\
 			&= -\tau_n (\mathcal{G}\hat{U}_{n+1}O_n, \hat{U}_{n+1})_a + \frac{\tau_n^2}{2} \|\mathcal{G}\hat{U}_{n+1}O_n\|_a^2
 			\\
 			&= -\tau_n \operatorname{tr}(\langle U_n, U_n \rangle O_n) + \frac{\tau_n^2}{2} \operatorname{tr}(A_n O_n^2).
 		\end{aligned}
 	\end{equation*}
 	Incorporating the spectral bound $A_n \leqslant \frac{\lambda_{\max}(\langle U_n, U_n \rangle)}{\lambda_1} I_N$ leads to the inequality
 	\begin{equation*}
 		E(U_{n+1}) - E(\hat{U}_{n+1}) \leqslant \left( -\tau_n + \frac{\tau_n^2 \lambda_{\max}(\langle U_n, U_n \rangle)}{2\lambda_1} \right) \operatorname{tr}(\langle U_n, U_n \rangle O_n).
 	\end{equation*}
 	Provided the time step satisfies $\tau_n < \frac{2\lambda_1}{\lambda_{\max}(\langle U_0, U_0 \rangle)}$, the leading coefficient becomes strictly negative, thereby ensuring $E(U_{n+1}) \leqslant E(\hat{U}_{n+1})$.
 	
 	Summing the bounds derived from both the first and second steps of \eqref{equ: numerical scheme}, we arrive at
 	\begin{equation*}
 		E(U_n) - E(U_{n+1}) \geqslant \left( \frac{\lambda_1}{{2}\lambda_{\max}(\langle U_0, U_0 \rangle)} - c_e \tau_n \right) \tau_n \|\mathcal{A}_{U_n}U_n\|_a^2.
 	\end{equation*}
 	Given that the coefficient remains strictly positive for the sufficiently small $\tau_n$, the discrete energy is monotonically non-increasing. This completes the induction argument.
 \end{proof}
 
 	\subsection{Asymptotic behavior}
 \subsubsection{Asymptotic orthogonality}
 
 We now demonstrate that the discrete trajectory asymptotically collapses onto the Stiefel manifold. The subsequent theorem quantifies the convergence rate of the orthogonality error $\|\langle U_n, U_n \rangle - I_N\|$.

 \begin{theorem}\label{thm: asymptotic orthogonality of Un}
 	Let $U_0 \in \mathcal{M}_{\geqslant}^N$. Provided the time steps satisfy
 	\begin{equation*}
 		\tau_n \leqslant \min\left\{ \frac{\lambda_1}{3\lambda_{\max}(\langle U_0, U_0 \rangle)}, E(U_0)\right\}, \quad \forall\, n \geqslant 0,
 	\end{equation*}
 	there exists a sequence of contraction factors $\omega(\tau_n) = 1 - \frac{\tau_n}{E(U_0)}$ satisfying
 	\begin{equation*}
 		\| \langle U_{n+1}, U_{n+1} \rangle - I_N \|^2 \leqslant \omega(\tau_n) \| \langle U_n, U_n \rangle - I_N \|^2.
 	\end{equation*}
 	Furthermore, if $\tau_n \geqslant \tau_{\min} > 0$ for all $n$, the sequence exhibits asymptotic orthogonality
 	\begin{equation*}
 		\lim_{n \to \infty} \langle U_n, U_n \rangle = I_N.
 	\end{equation*}
 \end{theorem}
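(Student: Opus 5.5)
We start from the exact update for the orthogonality error already derived in the proof of Lemma \ref{lem: non-expansion} and Theorem \ref{thm: stability}:
\begin{equation*}
O_{n+1} = O_n - \tau_n (A_n O_n + O_n A_n) + \tau_n^2 O_n B_n O_n .
\end{equation*}
The plan is to compute $\|O_{n+1}\|^2 = \operatorname{tr}(O_{n+1}^2)$ directly and compare it with $\operatorname{tr}(O_n^2)$. The proof is by induction, carrying the hypotheses $U_n \in \mathcal{M}_{\geqslant}^N$ and $E(U_n)\leqslant E(U_0)$.

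For these hypotheses we use Theorem \ref{thm: stability} and Theorem \ref{thm: Energy E(Un) decay}. We also use Lemma \ref{lem: non-expansion}, which gives $\lambda_{\max}(\langle U_n,U_n\rangle)\leqslant \lambda_{\max}(\langle U_0,U_0\rangle)$. Hence the step restriction $\tau_n\leqslant \lambda_1/(3\lambda_{\max}(\langle U_0,U_0\rangle))$ controls all the $n$-dependent constants uniformly.

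The cleanest route is to write $O_{n+1} = M_n^{\top} \ldots$, but $O_{n+1}$ is not a congruence of $O_n$. Instead, we note that $O_{n+1} = (I-\tau_n A_n)O_n(I-\tau_n A_n) + \tau_n^2 O_n(B_n - \ldots)$ is not exact either. So we expand the trace:
\begin{equation*}
\operatorname{tr}(O_{n+1}^2) = \operatorname{tr}(O_n^2) - 4\tau_n \operatorname{tr}(O_n A_n O_n) + R_n ,
\end{equation*}
where $R_n$ collects the $\tau_n^2$ and higher terms. These are $\tau_n^2\operatorname{tr}((A_nO_n+O_nA_n)^2)$, $2\tau_n^2\operatorname{tr}(O_n^2B_nO_n)$, $-2\tau_n^3\operatorname{tr}((A_nO_n+O_nA_n)O_nB_nO_n)$, and $\tau_n^4\operatorname{tr}((O_nB_nO_n)^2)$.

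The leading dissipative term is handled as in Theorem \ref{thm: asymptotic orthogonality}. By the discrete analogue of Lemma \ref{lem:bound of UGU} applied to $\hat U_{n+1}$, we have $A_n \geqslant \frac{1}{2E(\hat U_{n+1})} I_N \geqslant \frac{1}{2E(U_0)} I_N$. This uses $\hat U_{n+1}\in\mathcal{M}_{\geqslant}^N$, since $\langle\hat U_{n+1},\hat U_{n+1}\rangle=\langle U_n,U_n\rangle$, and $E(\hat U_{n+1})\leqslant E(U_n)\leqslant E(U_0)$ from the first-step estimate in Theorem \ref{thm: Energy E(Un) decay}. Hence
\begin{equation*}
-4\tau_n\operatorname{tr}(O_nA_nO_n)\leqslant -\frac{2\tau_n}{E(U_0)}\operatorname{tr}(O_n^2).
\end{equation*}

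The remainder $R_n$ must be absorbed so that only half of this survives, giving the claimed factor $1-\tau_n/E(U_0)$. This is the main obstacle. We bound each piece by a multiple of $\tau_n^2\operatorname{tr}(O_nA_nO_n)$, not of $\operatorname{tr}(O_n^2)$, so that it is absorbed by the leading term itself. The key inequalities are the following.
\begin{itemize}
\item $\operatorname{tr}((A_nO_n+O_nA_n)^2)\leqslant 4\operatorname{tr}(O_nA_n^2O_n)\leqslant 4\lambda_{\max}(A_n)\operatorname{tr}(O_nA_nO_n)$, using Cauchy--Schwarz for the trace.
\item $B_n\leqslant \lambda_1^{-1}A_n$ from Lemma \ref{lem: matrix bounds}, together with $O_n\leqslant\langle U_n,U_n\rangle$, which gives $\operatorname{tr}(O_n^2B_nO_n)\leqslant \lambda_{\max}(O_n)\lambda_1^{-1}\operatorname{tr}(O_nA_nO_n)$.
\end{itemize}
The cubic and quartic terms are treated the same way, each picking up a factor $\tau_n\lambda_{\max}(\langle U_0,U_0\rangle)/\lambda_1\leqslant 1/3$.

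With $\lambda_{\max}(A_n)\leqslant \lambda_{\max}(\langle U_0,U_0\rangle)/\lambda_1$, this yields $R_n\leqslant c\,\tau_n\cdot\frac{\tau_n\lambda_{\max}(\langle U_0,U_0\rangle)}{\lambda_1}\cdot\operatorname{tr}(O_nA_nO_n)$ with an explicit constant $c$. The choice of the factor $1/3$ in the step bound should make $R_n\leqslant 2\tau_n\operatorname{tr}(O_nA_nO_n)$. If the constants come out tight, we would reorganize $O_{n+1}$ as $(I-\tau_nA_n)O_n(I-\tau_nA_n)+\tau_n^2 O_n(B_n-\ldots)$-type cross terms to recover sharper constants.

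The condition $\tau_n\leqslant E(U_0)$ then ensures $\omega(\tau_n)\in[0,1)$. Finally, iterating gives $\|O_n\|^2\leqslant \prod_k(1-\tau_k/E(U_0))\|O_0\|^2 \leqslant (1-\tau_{\min}/E(U_0))^n\|O_0\|^2\to0$.
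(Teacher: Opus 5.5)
Your strategy is the paper's: expand $\operatorname{tr}(O_{n+1}^2)$ from the exact recursion, bound the $-4\tau_n\operatorname{tr}(O_nA_nO_n)$ term using $A_n\geqslant\frac{1}{2E(U_0)}I_N$, and absorb the remainder using $B_n\leqslant\lambda_1^{-1}A_n$, $O_n\leqslant\langle U_n,U_n\rangle$ and the step bound. The gap is the absorption step. You only assert that it works (``should make''), and with the estimates you list it fails. Put $s_n\coloneqq\tau_n\lambda_{\max}(\langle U_0,U_0\rangle)/\lambda_1\leqslant\frac13$. Your two bullet estimates give
\[
\tau_n^2\operatorname{tr}\big((A_nO_n+O_nA_n)^2\big)+2\tau_n^2\operatorname{tr}(O_n^2B_nO_n)\leqslant 6s_n\,\tau_n\operatorname{tr}(O_nA_nO_n).
\]
At $s_n=\frac13$ this already equals the entire budget $2\tau_n\operatorname{tr}(O_nA_nO_n)$. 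Using $\lambda_{\max}(O_n)\leqslant\lambda_{\max}(\langle U_0,U_0\rangle)-1$ instead gives no slack that is uniform in the initial data. So the $\tau_n^3$ and $\tau_n^4$ terms must contribute nonpositively. You propose to bound them in absolute value; the natural Cauchy--Schwarz bounds are $4s_n^2\,\tau_n\operatorname{tr}(O_nA_nO_n)$ and $s_n^3\,\tau_n\operatorname{tr}(O_nA_nO_n)$. These push $R_n$ above $2\tau_n\operatorname{tr}(O_nA_nO_n)$, and the factor $1-\tau_n/E(U_0)$ is lost. Your fallback, regrouping around $(I_N-\tau_nA_n)O_n(I_N-\tau_nA_n)$, is not carried out and does not obviously recover the constant.

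The paper closes this step differently. It never takes absolute values of the $\tau_n^3$ term: it keeps $-4\tau_n^3\operatorname{tr}(B_nO_nA_nO_n^2)$ with its sign, bounds the quartic term by at most $s_n$ times the same quantity, and discards the combination because its coefficient $\tau_n^3(-4+s_n)$ is negative. Only then does $6s_n\leqslant 2$ suffice, which is where the constant $\frac13$ comes from. To reach the stated constant you therefore need a sign (or joint) argument for $\operatorname{tr}(O_nA_nO_nB_nO_n)=\operatorname{tr}(B_nO_nA_nO_n^2)$. Be aware that positive semidefiniteness of the factors, together with $B_n\leqslant\lambda_1^{-1}A_n$, does not by itself make this trace nonnegative when the matrices do not commute. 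The paper's passage through $\lambda_{\max}$ of the non-symmetric product $B_nO_nA_n$ does not supply such an argument either.

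Alternatively, your absolute-value route goes through rigorously if the step bound is tightened, e.g.\ to $\tau_n\leqslant\lambda_1/(4\lambda_{\max}(\langle U_0,U_0\rangle))$, since $6s+4s^2+s^3\leqslant 2$ for $s\leqslant\frac14$. This yields the same factor $\omega(\tau_n)$, but under a stronger hypothesis than stated.

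Finally, both you and the paper obtain $E(\hat U_{n+1})\leqslant E(U_0)$ from Theorem \ref{thm: Energy E(Un) decay}. That theorem's step restriction is not implied by the one in this statement, so it should be stated as an explicit assumption.
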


 \begin{proof}
 	We recall the inner product update equation derived previously as
 	\begin{equation*}
 		O_{n+1} = O_n - \tau_n (A_n O_n + O_n A_n) + \tau_n^2 O_n B_n O_n.
 	\end{equation*}
 	Expanding this relation to estimate the error decay yields
 	\begin{equation*}
 		\begin{aligned}
 			\|O_{n+1} \|^2 &= \operatorname{tr}\left( \left(O_n - \tau_n \left(A_n O_n +O_n A_n\right) +\tau_n^2 O_n B_n O_n\right)^2 \right)
 			\\& = \operatorname{tr}\Big(\big(I_N - 4\tau_n A_n +2\tau_n^2\left(B_n O_n +A_n^2\right) -4\tau_n^3 B_n O_n A_n +\tau_n^4 B_n O_n^2 B_n\big) \cdot O_n^2 \Big)
 			\\& \quad + \operatorname{tr}\left(2\tau_n^2\left(A_n O_n\right)^2\right).
 		\end{aligned}
 	\end{equation*}
 	The spectral properties of $A_n$ provide the trace bound
 	\begin{equation*}
 		\operatorname{tr}\left(2\tau_n^2 \left(A_nO_n\right)^2\right) \leqslant \operatorname{tr}\left( \frac{2\tau_n^2}{\lambda_1} A_n \langle U_n, U_n \rangle O_n^2\right).
 	\end{equation*}
 	Substituting this bound and rearranging the terms produces the difference inequality
 	\begin{equation*}
 		\begin{aligned}
 			\|O_{n+1} \|^2 - \|O_n \|^2
 			&\leqslant \operatorname{tr}\left( \left(-4\tau_n A_n +2 \tau_n^2 \left( B_n O_n +A_n^2 + \frac{1}{\lambda_1} A_n \langle U_n, U_n \rangle\right) \right) O_n^2\right)
 			\\ &\quad +\operatorname{tr}\left(\left( -4\tau_n^3 B_n O_n A_n +\tau_n^4B_n O_n^2 B_n\right) O_n^2\right)
 			\\ &\leqslant \operatorname{tr}\left( \left(-4\tau_n A_n +\frac{6\tau_n^2}{\lambda_1} A_n \langle U_n, U_n \rangle \right) O_n^2\right)
 			\\ &\quad +\operatorname{tr}\left(\left( -4\tau_n^3 B_n O_n A_n +\frac{\tau_n^4}{\lambda_1}B_n O_n \langle U_n, U_n \rangle A_n\right) O_n^2\right).
 		\end{aligned}
 	\end{equation*}
 	By the quasi-orthogonality of the solutions, we have
 	\begin{equation*}
 		\begin{aligned}
 			\|O_{n+1} \|^2 - \|O_n \|^2
 			&\leqslant \operatorname{tr}\left( 2 \tau_n \left(-2+\frac{3\tau_n}{\lambda_1} \lambda_{\max}(\langle U_n, U_n \rangle) \right) A_n O_n^2\right)
 			\\ &\quad +\operatorname{tr}\left( \tau_n^3 \left( -4+\frac{\tau_n}{\lambda_1} \lambda_{\max}(\langle U_n, U_n \rangle)\right) B_n O_n A_n O_n^2\right)
 			\\ &\leqslant \lambda_{\max}\left( 2 \tau_n \left(-2+\frac{3\tau_n}{\lambda_1} \lambda_{\max}(\langle U_n, U_n \rangle) \right) A_n \right) \|O_n\|^2
 			\\ &\quad + \lambda_{\max}\left(\tau_n^3 \left( -4+\frac{\tau_n}{\lambda_1} \lambda_{\max}(\langle U_n, U_n \rangle)\right) B_n O_n A_n \right) \|O_n\|^2.
 		\end{aligned}
 	\end{equation*}
 	Applying the assumed restriction on $\tau_n \leqslant \frac{\lambda_1}{3\lambda_{\max}(\langle U_n, U_n \rangle)}$ simplifies the inequality to
 	\begin{equation}\label{eq: orthogonality error realtion}
 		\begin{aligned}
 			\|O_{n+1} \|^2 - \|O_n \|^2 &\leqslant \lambda_{\max}\left( 2 \tau_n \left(-2+\frac{3\tau_n}{\lambda_1} \lambda_{\max}(\langle U_n, U_n \rangle) \right) A_n \right) \|O_n\|^2 
 			\\
 			&\leqslant  \lambda_{\max}\left(  -2\tau_nA_n \right)  \|O_n\|^2 
 		\end{aligned}
 	\end{equation}
 	Lemma \ref{lem: matrix bounds} combined with the energy monotonicity $E(U_{n+1}) \leqslant E(U_0)$ guarantees the lower bound
 	\begin{equation*}
 		A_n = \langle \hat{U}_{n+1}, \mathcal{G}\hat{U}_{n+1} \rangle \geqslant \frac{1}{2E(\hat{U}_{n+1})} I_N \geqslant \frac{1}{2E(U_0)} I_N.
 	\end{equation*}
 	Substituting this estimate back into the difference inequality yields
 	\begin{equation*}
 		\|O_{n+1}\|^2 \leqslant \left( 1 - \frac{\tau_n}{E(U_0)} \right) \|O_n\|^2.
 	\end{equation*}
 	Defining $\omega(\tau_n) \coloneqq 1 - \frac{\tau_n}{E(U_0)}$, the uniform bounds on $\tau_n$ ensure $\sup\limits_{n\geqslant 0}\omega(\tau_n) \leqslant 1 - \frac{\tau_{\min}}{E(U_0)} < 1$. Consequently, $\lim\limits_{n \to \infty} \prod_{i=0}^n \omega(\tau_i) = 0$.  
 	 Applying this limit forces 
 	 $$\lim\limits_{n \to \infty} \|  \left\langle U_{n+1}, U_{n+1}\right\rangle - I_N \|^2 \leqslant \lim\limits_{n \to \infty} \prod_{i=0}^n \omega(\tau_i)  \cdot  \|  \left\langle U_0, U_0\right\rangle - I_N \|^2=  0,$$
 	  establishing $\lim\limits_{n \to \infty} \langle U_n, U_n \rangle = I_N$ and completing the proof.
 \end{proof}

 \subsubsection{Convergence}
 
 Building on the asymptotic orthogonality, we proceed to establish the strong global convergence of the discrete sequence $\{U_n\}_{n=0}^{\infty}$.
 \begin{theorem}\label{thm: convergence of Un}
 	Let $U_0 \in \mathcal{S}$. Provided the time steps satisfy $$\tau_n \in \left( \tau_{\min}, \min\left\{ \frac{\lambda_1}{\max\{3,{2}c_e\} \lambda_{\max}(\langle U_0, U_0 \rangle)}, \frac{\lambda_1}{{2}\sqrt{2E(U_0) \lambda_{\max}(\langle U_0, U_0 \rangle)}} \right\} \right)$$ for all $n \geqslant 0$, there exists an orthogonal matrix $\tilde{Q}_* \in \mathcal{O}^N$ such that
 	\begin{equation*}
 		\lim_{n \to \infty} \left\| U_n - V_{*}\tilde{Q}_* \right\|_a = 0.
 	\end{equation*}
 	Furthermore, the discrete energy and gradient satisfy
 	\begin{equation*}
 		\begin{aligned}
 		&	\lim_{n \to \infty} E(U_n) = E(V_{*}), \\ 
 		&	\lim_{n \to \infty} \left\| \nabla_G E_{\mathcal{G}}(U_n) \right\|_a = 0.
 		\end{aligned}
 	\end{equation*}
 \end{theorem}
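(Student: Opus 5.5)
The plan is to mirror the argument of Theorem \ref{thm: convergence} at the discrete level. The energy identity of Theorem \ref{thm: Energy E(Un) decay} replaces the continuous dissipation law, and Theorem \ref{thm: asymptotic orthogonality of Un} replaces the continuous orthogonality estimate.

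\textbf{Summability of the gradient.} Under the stated step constraint, $\tau_n<\lambda_1/(2c_e\lambda_{\max}(\langle U_0,U_0\rangle))$, so the coefficient $\frac{\lambda_1}{2\lambda_{\max}(\langle U_0,U_0\rangle)}-c_e\tau_n$ is bounded below by a positive constant $c_0$. Theorem \ref{thm: stability} and Lemma \ref{lem: non-expansion} keep $U_n\in\mathcal{M}_{\geqslant}^N$ with $\langle U_n,U_n\rangle\leqslant\langle U_0,U_0\rangle$. Since $E(U_n)\geqslant E(V_*)$ on the quasi-Stiefel set, summing the energy inequality gives
\begin{equation*}
	c_0\tau_{\min}\sum_{n=0}^\infty\|\mathcal{A}_{U_n}U_n\|_a^2\leqslant E(U_0)-E(V_*)<\infty .
\end{equation*}
Hence $\|\mathcal{A}_{U_n}U_n\|_a\to0$. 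Next I relate this quantity to the gradient. One has
\begin{equation*}
	\mathcal{A}_{U_n}U_n=\mathcal{G}U_n\langle U_n,U_n\rangle-U_n\langle\mathcal{G}U_n,U_n\rangle=\nabla_G E_{\mathcal{G}}(U_n)+\mathcal{G}U_n O_n .
\end{equation*}
Theorem \ref{thm: asymptotic orthogonality of Un} gives $O_n\to0$, and $\|\mathcal{G}U_n\|_a$ is uniformly bounded by $c_\Omega\|U_n\|_a/\sqrt{\lambda_1}$. Therefore $\|\nabla_G E_{\mathcal{G}}(U_n)\|_a\to0$ for the whole sequence. This is better than in the continuous case, where only a liminf was available.

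\textbf{Compactness and identification of the limit.} Energy decay gives boundedness of $U_n$ in $(H_0^1(\Omega))^N$. I extract a subsequence converging weakly in $H_0^1$ and strongly in $L^2$ to $\bar U$. Exactly as in Theorem \ref{thm: convergence}, the matrices $\langle U_n,U_n\rangle$ and $\langle U_n,\mathcal{G}U_n\rangle$ converge, so the gradient converges weakly to $\nabla_G E_{\mathcal{G}}(\bar U)$, which must vanish. Moreover $\langle\bar U,\bar U\rangle=I_N$, so $\bar U$ lies in the sublevel set, and weak lower semicontinuity gives $E(\bar U)\leqslant E(U_0)\leqslant E_1$. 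Uniqueness of the critical point in $\mathcal{S}$ then gives $\bar U=V_*\tilde Q_*$. To upgrade to strong $H^1$ convergence, I reuse the trace argument verbatim: pair $U_n$ with the gradient in $(\cdot,\cdot)_a$, pass to the limit, and use positive definiteness of $\langle\mathcal{G}\bar U,\bar U\rangle$ to force $\langle U_n,U_n\rangle_a\to\langle\bar U,\bar U\rangle_a$. Energy convergence along the subsequence plus monotonicity then gives $E(U_n)\to E(V_*)$ for the whole sequence.

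\textbf{Main obstacle: full-sequence convergence to a single $V_*\tilde Q_*$.} The contradiction argument of Theorem \ref{thm: convergence} yields only $\|[U_n]-[V_*]\|_a\to0$. In continuous time the fixed rotation came from temporal continuity; the discrete analogue is that consecutive iterates get close. I would establish this from the scheme: $\|\hat U_{n+1}-U_n\|_a\leqslant\tau_n\|\mathcal{A}_{U_n}\tilde U_{n+1/2}\|_a\leqslant2\tau_n\|\mathcal{A}_{U_n}U_n\|_a\to0$, and $\|U_{n+1}-\hat U_{n+1}\|_a\lesssim\tau_n\|O_n\|\to0$. So $\|U_{n+1}-U_n\|_a\to0$, but this alone is insufficient, because the rotation could drift slowly. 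If needed I would use the square-summability of the gradient and the exponential decay of $O_n$, and argue that $\sum\|U_{n+1}-U_n\|_a<\infty$ near $[V_*]$. This would rely on a local Łojasiewicz-type or spectral-gap inequality $E(U)-E(V_*)\lesssim\|\nabla_G E_{\mathcal{G}}(U)\|_a^2$, which is exactly what the subsequent rate theorem should supply. Failing that, I would settle for the paper's style: fix $\tilde Q_*$ from the first cluster point and invoke the same orbital-alignment reasoning. The gradient limit then follows from continuity of $\nabla_G E_{\mathcal{G}}$ (Lemma \ref{lem: Local Lipschitz property}).
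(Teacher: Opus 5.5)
Your derivation of the energy and gradient limits is correct and follows the paper's proof, with two real improvements. First, summing the dissipation inequality gives $\|\mathcal{A}_{U_n}U_n\|_a\to0$ along the whole sequence, whereas the paper only extracts a subsequence from a $\liminf$. Second, the identity $\mathcal{A}_{U_n}U_n=\nabla_G E_{\mathcal{G}}(U_n)+\mathcal{G}U_nO_n$ together with $\|O_n\|\to0$ gives $\|\nabla_G E_{\mathcal{G}}(U_n)\|_a\to0$ without any appeal to orbital convergence. Your check that $\langle\bar U,\bar U\rangle=I_N$ and $E(\bar U)\leqslant E(U_0)\leqslant E_1$ also justifies $\bar U\in\mathcal{S}$ more carefully than the paper does. (A small point you share with the paper: the admissible interval for $\tau_n$ is open at the top, so $\frac{\lambda_1}{2\lambda_{\max}(\langle U_0,U_0\rangle)}-c_e\tau_n$ need not be bounded below by a positive constant. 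You should keep $\tau_n$ a fixed distance below the endpoint.)

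The genuine gap is the first conclusion, and your fallback does not close it. The contradiction argument controls only $\|[U_n]-[V_*]\|_a=\min_{Q\in\mathcal{O}^N}\|U_n-V_*Q\|_a$, which says nothing about the minimising $Q_n$. The scheme is equivariant under $U\mapsto UQ$ and every $V_*Q$ is a fixed point, so there is no isolatedness to exploit. The fact $\|U_{n+1}-U_n\|_a\to0$ only makes the set of cluster points a connected subset of the orbit $\{V_*Q:Q\in\mathcal{O}^N\}$. Fixing $\tilde Q_*$ from one cluster point and invoking ``orbital alignment'' is exactly the unjustified step; the paper's sentence ``This limit intrinsically provides $\lim_{n\to\infty}\|U_n-V_*\tilde Q_*\|_a=0$'' has the same hole.

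What you need is $\sum_n\|U_{n+1}-U_n\|_a<\infty$. Your estimate $\|U_{n+1}-U_n\|_a\leqslant2\tau_n\|\mathcal{A}_{U_n}U_n\|_a+C\tau_n\|O_n\|$, combined with the geometric contraction of $\|O_n\|$ from Theorem \ref{thm: asymptotic orthogonality of Un}, reduces this to summability of $\|\mathcal{A}_{U_n}U_n\|_a$. Square-summability does not provide that.

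Your instinct to borrow it from the rate analysis is right, but you must rule out circularity. Lemma \ref{lem: predictor contraction} and Theorem \ref{thm: exponential rate of Un} are phrased using $\langle U_n,U_n\rangle_a\to Q_*^\top\Lambda Q_*$, which is the very conclusion at stake. This is fixable. Every quantity entering Lemmas \ref{lem: corrector monotonicity}--\ref{lem: gradient decay corrector}, and the proof of Theorem \ref{thm: exponential rate of Un} up to the bound on $\|U_{n+1}-U_n\|_a$, is invariant under $U_n\mapsto U_nQ$. These are the spectra of $\langle U_n,\mathcal{G}U_n\rangle$, $\langle \hat U_{n+1},\mathcal{G}\hat U_{n+1}\rangle$ and $\langle U_n,U_n\rangle_a$, the norms $\|O_n\|$ and $\|\mathcal{A}_{U_n}U_n\|_a$, and the Rayleigh-quotient bound for $\mathcal{A}_{U_n}U_n$. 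So that input can be replaced by the invariant fact that the eigenvalues of $\langle U_n,U_n\rangle_a$ tend to $\lambda_1,\dots,\lambda_N$, which follows from $\|[U_n]-[V_*]\|_a\to0$ alone. The resulting bound $\|U_{n+1}-U_n\|_a\leqslant C\hat\kappa^n$ makes $\{U_n\}$ Cauchy in $(H_0^1(\Omega))^N$, and its limit, lying on the orbit, is some $V_*\tilde Q_*$.

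Alternatively, prove a \L{}ojasiewicz inequality with exponent $\frac12$ near the critical orbit, which is nondegenerate because $\lambda_N<\lambda_{N+1}$. Perturb it by the summable $\|O_n\|$ terms and telescope it against the dissipation estimate of Theorem \ref{thm: Energy E(Un) decay}. As written, your proposal, like the paper's proof, establishes only $[U_n]\to[V_*]$ together with the energy and gradient limits.
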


 \begin{proof}
 	Theorem \ref{thm: Energy E(Un) decay} alongside the uniform lower bound $E(V_*)$ provides
 	\begin{equation*}
 		\begin{aligned}
 			\sum_{n=0}^{\infty} \left( \frac{\lambda_1}{{2}\lambda_{\max}(\langle U_0, U_0 \rangle)} - c_e \tau_n \right) \tau_n \|\mathcal{A}_{U_n}U_n\|_a^2 
 			&\leqslant \sum_{n=0}^{\infty} (E(U_n) - E(U_{n+1}))
 			\\
 			&= E(U_0) - \lim_{n \to \infty} E(U_n) < \infty,
 		\end{aligned}
 	\end{equation*}
 	which implies
 	\begin{equation*} 
 		\liminf_{n \to \infty} \|\mathcal{A}_{U_n}U_n\|_a = 0. 
 	\end{equation*}
 	Consequently, there exists a subsequence $\{U_{n_k}\}_{k=0}^{\infty}$ satisfying
 	\begin{equation*} 
 		\lim_{k \to \infty} \|\mathcal{A}_{U_{n_k}}U_{n_k}\|_a = 0. 
 	\end{equation*}
 	
 	The uniform boundedness of $\{U_n\}$ within the closed set $\mathcal{S}$ permits the extraction of a weakly convergent subsequence with limit $\tilde{U} \in \mathcal{S}$ taking the form
 	\begin{equation*}
 		U_{n_k} \rightharpoonup \tilde{U} \quad \text{weakly in } \big(H_0^1(\Omega)\big)^N,
 	\end{equation*}
 	and thus we have
 	\begin{equation*}
 		U_{n_k} \to \tilde{U} \quad \text{strongly in } \big(L^2(\Omega)\big)^N.
 	\end{equation*}
 	This strong convergence preserves the inner product matrices such that
 	\begin{equation*}
 		\lim_{k \to \infty} \langle U_{n_k}, U_{n_k} \rangle = \langle \tilde{U}, \tilde{U} \rangle = I_N, \quad \lim_{k \to \infty} \langle U_{n_k}, \mathcal{G}U_{n_k} \rangle = \langle \tilde{U}, \mathcal{G}\tilde{U} \rangle.
 	\end{equation*}
 	Consequently, the operator action converges weakly as
 	\begin{equation*}
 		\mathcal{A}_{U_{n_k}}U_{n_k} \rightharpoonup \mathcal{A}_{\tilde{U}}\tilde{U} \quad \text{weakly in } \big(H_0^1(\Omega)\big)^N.
 	\end{equation*}
 	Combining this weak limit with $\|\mathcal{A}_{U_{n_k}}U_{n_k}\|_a \to 0$ enforces $\mathcal{A}_{\tilde{U}}\tilde{U} = 0$, leading to $\nabla_G E_{\mathcal{G}}(\tilde{U}) = 0$. The uniqueness of the critical point within $\mathcal{S}$ dictates $[\tilde{U}] = [V_*]$. Specifically, there exists an orthogonal matrix $\tilde{Q}_* \in \mathcal{O}^N$ satisfying $\tilde{U} = V_* \tilde{Q}_*$, and it yields the inner product relation
 	\begin{equation*}
 		\lim_{k \to \infty} \langle U_{n_k}, \mathcal{A}_{U_{n_k}}U_{n_k} \rangle_a = 0 = \langle \tilde{U}, \mathcal{A}_{\tilde{U}}\tilde{U} \rangle_a.
 	\end{equation*}
 	Evaluating the limit as $k \to \infty$ and exploiting the strong $L^2$ convergence produces
 	\begin{equation*}
 		\begin{aligned}
 			&	\langle \tilde{U}, \mathcal{G}\tilde{U} \rangle_a \langle \tilde{U}, \tilde{U} \rangle -  \langle \tilde{U}, \tilde{U} \rangle_a \langle \tilde{U}, \mathcal{G}\tilde{U} \rangle
 			\\=& \lim_{k \to \infty} \left( \langle U_{n_k}, \mathcal{G}U_{n_k} \rangle_a \langle U_{n_k}, U_{n_k} \rangle - \langle U_{n_k}, U_{n_k} \rangle_a \langle U_{n_k}, \mathcal{G}U_{n_k} \rangle \right)
 			\\
 			=& \langle \tilde{U}, \mathcal{G}\tilde{U} \rangle_a \langle \tilde{U}, \tilde{U} \rangle - \left( \lim_{k \to \infty} \langle U_{n_k}, U_{n_k} \rangle_a \right) \langle \tilde{U}, \mathcal{G}\tilde{U} \rangle.
 		\end{aligned}
 	\end{equation*}
 	This equality directly simplifies to
 	\begin{equation*}
 		\lim_{k \to \infty} \langle U_{n_k}, U_{n_k} \rangle_a = \langle \tilde{U}, \tilde{U} \rangle_a,
 	\end{equation*}
 	guaranteeing strong convergence denoted by
 	\begin{equation*}
 		U_{n_k} \to \tilde{U} \quad \text{strongly in } \big(H_0^1(\Omega)\big)^N.
 	\end{equation*}
 	The continuity of the energy functional immediately yields
 	\begin{equation*}
 		\lim_{k \to \infty} E(U_{n_k}) = E(\tilde{U}) = E(V_*).
 	\end{equation*}
 	
 	To verify the convergence of the entire sequence, we assume for the sake of contradiction the existence of a subsequence $\{U_{n_l}\}$ and a positive constant $\delta > 0$ satisfying $\|[U_{n_l}] - [V_*]\|_a \geqslant \delta$ for all $l$. Similar arguments show the extraction of a further sub-subsequence converging to a limit $\check{U} \in \mathcal{S}$. Identical analytical steps dictate that $\check{U}$ is a critical point satisfying $[\check{U}] = [V_*]$, which directly contradicts the assumed separation $\delta > 0$. Therefore, the global sequence converges independently of the subsequence choice, producing
 	\begin{equation*}
 		\lim_{n \to \infty} \|[U_n] - [V_*]\|_a = 0.
 	\end{equation*}
 	This limit intrinsically provides
 	\begin{equation*}
 		\lim_{n \to \infty} \|U_n - V_*\tilde{Q}_*\|_a = 0.
 	\end{equation*}
 	
 	Finally, the continuity and orthogonal invariance of the energy and gradient on $\mathcal{S}$ yield
 	\begin{equation*}
 		\begin{aligned}
 			&\lim_{n \to \infty} E(U_n) = E(V_{*}),
 			\\
 			&\lim_{n \to \infty} \|\nabla_G E_{\mathcal{G}}(U_n)\|_a = \|\nabla_G E_{\mathcal{G}}(V_*)\|_a = 0.
 		\end{aligned}
 	\end{equation*}
 \end{proof}
 
\subsubsection{Convergence rate}

We now establish the exponential convergence rate of the proposed scheme. 

To this end, we first establish a monotonic property associated with the second step of the iterative scheme.
\begin{lemma}\label{lem: corrector monotonicity}
	Suppose the iterate satisfies $U_n \in \mathcal{M}_{\geqslant}^N$. If $\tau_n < \frac{\lambda_1}{2\lambda_{\max}(\langle U_n, U_n \rangle)}$, then
	\begin{equation*}
		\langle U_{n+1}, \mathcal{G}U_{n+1} \rangle \leqslant \langle \hat{U}_{n+1}, \mathcal{G}\hat{U}_{n+1} \rangle.
	\end{equation*}
\end{lemma}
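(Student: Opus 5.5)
We expand $\langle U_{n+1},\mathcal{G}U_{n+1}\rangle$ directly from the second step of \eqref{equ: numerical scheme}, $U_{n+1}=\hat{U}_{n+1}-\tau_n\mathcal{G}\hat{U}_{n+1}O_n$. Here $O_n=\langle\hat U_{n+1},\hat U_{n+1}\rangle-I_N$, by the invariance established in the proof of Lemma \ref{lem: non-expansion}. Using the symmetry of $\mathcal{G}$ and the notation $A_n$, $B_n$, we get
\begin{equation*}
\langle U_{n+1},\mathcal{G}U_{n+1}\rangle = A_n-\tau_n\left(B_nO_n+O_nB_n\right)+\tau_n^2 O_n C_n O_n,
\end{equation*}
where $C_n\coloneqq\langle\mathcal{G}\hat U_{n+1},\mathcal{G}^2\hat U_{n+1}\rangle$. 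So it suffices to show that
\begin{equation*}
\tau_n\left(B_nO_n+O_nB_n\right)-\tau_n^2O_nC_nO_n\geqslant 0
\end{equation*}
in the semidefinite order.

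The key step is that $B_nO_n+O_nB_n$ need not be positive semidefinite in general, so Weyl-type eigenvalue bounds alone will not work. We therefore write the difference as a Gram matrix. Set $W\coloneqq\mathcal{G}\hat U_{n+1}O_n$ and $Z\coloneqq\hat U_{n+1}$. Then $\langle Z,\mathcal{G}W\rangle=B_nO_n$, and the difference equals
\begin{equation*}
\langle Z,\mathcal{G}Z\rangle-\langle Z-\tau_n W,\mathcal{G}(Z-\tau_nW)\rangle .
\end{equation*}
We test against $x\in\mathbb{R}^N$. With $z=Zx$ and $w=Wx$, the quantity to bound from below is
\begin{equation*}
2\tau_n(z,\mathcal{G}w)-\tau_n^2(w,\mathcal{G}w).
\end{equation*}
Since $Z$ and $O_n$ mix columns, $w$ is not a scalar multiple of $\mathcal{G}z$, and this is the main obstacle.

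To handle it, we exploit the structure $w=\mathcal{G}\hat U_{n+1}O_nx$ together with $O_n\leqslant\langle U_n,U_n\rangle$ from Lemma \ref{lem: matrix bounds}. The first term is
\begin{equation*}
(z,\mathcal{G}w)=x^\top B_nO_nx .
\end{equation*}
For the second term, $(w,\mathcal{G}w)\leqslant\lambda_1^{-1}\|w\|^2$, and
\begin{equation*}
\|w\|^2=x^\top O_nB_nO_nx\leqslant\lambda_1^{-1}\,x^\top O_nA_nO_nx\leqslant\lambda_1^{-2}\lambda_{\max}(\langle U_n,U_n\rangle)\,x^\top O_n^2x
\end{equation*}
by Lemma \ref{lem: matrix bounds}(3). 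The remaining difficulty is bounding $x^\top B_nO_nx$ from below by a multiple of $x^\top O_nB_nO_nx$ for arbitrary $x$.

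I expect this step to fail for generic $x$ unless one symmetrizes first. The first thing I would try is a congruence: replace $x$ by $B_n^{-1/2}y$ and show that the matrix
\begin{equation*}
2\tau_n B_n^{1/2}O_nB_n^{1/2}\cdot(\text{symmetrized})-\tau_n^2B_n^{-1/2}O_nC_nO_nB_n^{-1/2}
\end{equation*}
is positive semidefinite, using $C_n\leqslant\lambda_1^{-1}B_n$ and $O_n\leqslant\langle U_n,U_n\rangle$. If that fails, the fallback is to follow exactly the paper's eigenvalue argument from Theorem \ref{thm: stability}: treat $\lambda(B_nO_n)\geqslant0$ and bound the $\tau_n^2$ term by
\begin{equation*}
\frac{\tau_n\lambda_{\max}(\langle U_n,U_n\rangle)}{\lambda_1}
\end{equation*}
times the first-order term. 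The restriction $\tau_n<\lambda_1/(2\lambda_{\max}(\langle U_n,U_n\rangle))$ then makes the first-order term dominate.
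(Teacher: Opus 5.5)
Your expansion $\langle U_{n+1},\mathcal G U_{n+1}\rangle=A_n-\tau_n(B_nO_n+O_nB_n)+\tau_n^2O_nC_nO_n$ is correct, and you are right that $B_nO_n+O_nB_n$ need not be positive semidefinite. The proposal has a genuine gap: neither of your two routes gets past this obstruction. In fact no route can, because the obstruction already refutes the Loewner inequality. Since $C_n\geqslant0$, for every $x\in\mathbb R^N$ and every $\tau_n>0$,
\begin{equation*}
x^\top\big(\langle\hat U_{n+1},\mathcal G\hat U_{n+1}\rangle-\langle U_{n+1},\mathcal GU_{n+1}\rangle\big)x=\tau_n\,x^\top(B_nO_n+O_nB_n)x-\tau_n^2\,(O_nx)^\top C_n(O_nx)\leqslant\tau_n\,x^\top(B_nO_n+O_nB_n)x .
\end{equation*}
So the lemma would force $B_nO_n+O_nB_n\geqslant0$, and no step-size restriction can help. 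This fails already for $N=2$. Take $L^2$-normalized eigenfunctions $\phi,\psi$ of $-\Delta+\mathcal V$ with eigenvalues $\mu<\nu$, fix $a>0$, and set $U_n=\big(\sqrt{1+a}\,(\phi+\psi)/\sqrt2,\ (\psi-\phi)/\sqrt2\big)$. Then $\langle U_n,U_n\rangle=\operatorname{diag}(1+a,1)\geqslant I_2$, so $O_n=\operatorname{diag}(a,0)$. The off-diagonal entry of $\langle\mathcal GU_n,\mathcal GU_n\rangle$ is $\tfrac{\sqrt{1+a}}{2}(\nu^{-2}-\mu^{-2})\neq0$. The iteration stays in $\operatorname{span}\{\phi,\psi\}$ and $\hat U_{n+1}\to U_n$ as $\tau_n\to0$, so $b=(B_n)_{12}\neq0$ for all sufficiently small $\tau_n$, and these satisfy the hypothesis. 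With $x=(\epsilon,1)^\top$ and $\epsilon=-b/(2(B_n)_{11})$, one gets $x^\top(B_nO_n+O_nB_n)x=-ab^2/(2(B_n)_{11})<0$. Hence the matrix inequality claimed in the lemma is false in general.

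The same example rules out both of your routes. The congruence $x=B_n^{-1/2}y$ cannot help: by Sylvester's law of inertia it preserves the number of negative eigenvalues. The positive semidefinite matrix $B_n^{1/2}O_nB_n^{1/2}$ is only \emph{similar} to $B_nO_n$, which controls the spectrum of $B_nO_n$ but not the quadratic form $x^\top B_nO_nx$. Your fallback is exactly the paper's argument. It combines Weyl's inequality with the identity $\lambda_{\max}(-(B_nO_n+O_nB_n))=\lambda_{\max}(-2B_nO_n)$ taken from the proof of Lemma~\ref{lem: non-expansion}, and that identity is wrong. The left-hand side equals $\max_{x^\top x=1}(-2x^\top B_nO_nx)$, which for the non-normal matrix $-2B_nO_n$ is only bounded \emph{below} by its largest eigenvalue. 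In the example, $B_nO_n$ has eigenvalues $a(B_n)_{11}$ and $0$, so $\lambda_{\max}(-2B_nO_n)=0<\lambda_{\max}(-(B_nO_n+O_nB_n))$. What does survive is the trace of the claimed inequality. Using $C_n\leqslant\lambda_1^{-1}B_n$ and $O_n^2\leqslant\lambda_{\max}(O_n)O_n$, one gets
\begin{equation*}
\operatorname{tr}\big(\langle\hat U_{n+1},\mathcal G\hat U_{n+1}\rangle-\langle U_{n+1},\mathcal GU_{n+1}\rangle\big)\geqslant\tau_n\operatorname{tr}(B_nO_n)\Big(2-\frac{\tau_n\lambda_{\max}(\langle U_n,U_n\rangle)}{\lambda_1}\Big)\geqslant0 .
\end{equation*}
This is what your quadratic-form computation yields after summing over an orthonormal basis, provided you keep $B_n$ rather than bounding by $x^\top O_n^2x$. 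The full matrix inequality, which is what Lemma~\ref{lem: gradient decay corrector} later invokes, does not hold.
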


\begin{proof}
	Substituting the discrete relation associated with the second step of \eqref{equ: numerical scheme} into the inner product yields the expansion
	\begin{align*}
		&\langle U_{n+1}, \mathcal{G}U_{n+1} \rangle - \langle \hat{U}_{n+1}, \mathcal{G}\hat{U}_{n+1} \rangle
		\\
		&\quad = \left\langle \hat{U}_{n+1} - \tau_n \mathcal{G}\hat{U}_{n+1} O_n, \mathcal{G}\hat{U}_{n+1} - \tau_n \mathcal{G}^2\hat{U}_{n+1} O_n \right\rangle - \langle \hat{U}_{n+1}, \mathcal{G}\hat{U}_{n+1} \rangle
		\\
		&\quad = -\tau_n \langle \hat{U}_{n+1}, \mathcal{G}^2\hat{U}_{n+1} \rangle O_n - \tau_n O_n \langle \hat{U}_{n+1}, \mathcal{G}^2\hat{U}_{n+1} \rangle + \tau_n^2 O_n \langle \hat{U}_{n+1}, \mathcal{G}^3\hat{U}_{n+1} \rangle O_n.
	\end{align*}
	The quasi-orthogonal condition $U_n \in \mathcal{M}_{\geqslant}^N$ ensures $0\leqslant O_n \leqslant \left\langle U_n, U_n \right\rangle$, which provides
	\begin{equation*}
		\begin{aligned}
			&	\lambda_{\max}\left(\left\langle U_{n+1}, \mathcal{G}U_{n+1}\right\rangle   - \left\langle \hat{U}_{n+1}, \mathcal{G}\hat{U}_{n+1} \right\rangle  \right) 
			\\	\leqslant& \lambda_{\max}\left( - \tau_n \left\langle \hat{U}_{n+1}, \mathcal{G}^2\hat{U}_{n+1} \right\rangle \cdot O_n -     \tau_n O_n \cdot \left\langle \hat{U}_{n+1}, \mathcal{G}^2\hat{U}_{n+1} \right\rangle \right)
			\\& +\lambda_{\max}\left( \tau_n^2 O_n \cdot \left\langle \hat{U}_{n+1}, \mathcal{G}^3\hat{U}_{n+1} \right\rangle \cdot O_n\right)
			\\ 	\leqslant& \lambda_{\max}\left( - 2\tau_n \left\langle \hat{U}_{n+1}, \mathcal{G}^2\hat{U}_{n+1} \right\rangle \cdot O_n  \right) 
			\\ &+\lambda_{\max}\left( \tau_n^2\frac{ \lambda_{\max}\left(\left\langle U_n, U_n \right\rangle\right)}{\lambda_1}\cdot \left\langle \hat{U}_{n+1}, \mathcal{G}^2\hat{U}_{n+1} \right\rangle \cdot O_n\right)
			\\ \leqslant & \lambda_{\max}\left( \left(- 2 + \frac{\lambda_{\max}\left(\left\langle U_n, U_n \right\rangle\right)}{\lambda_1} \tau_n\right)\tau_n  \left\langle \hat{U}_{n+1}, \mathcal{G}^2\hat{U}_{n+1} \right\rangle \cdot O_n \right).
		\end{aligned}
	\end{equation*}
	Imposing the condition $\tau_n < \frac{2\lambda_1}{\lambda_{\max}(\langle U_n, U_n \rangle)}$ leads to the inequality
	\begin{equation*}
		\lambda_{\max} \left( \langle U_{n+1}, \mathcal{G}U_{n+1} \rangle - \langle \hat{U}_{n+1}, \mathcal{G}\hat{U}_{n+1} \rangle \right) \leqslant 0.
	\end{equation*}
	This completes the proof.
\end{proof}

Building upon this monotonic property, we investigate the contraction behavior of the discrete gradient during the first step of the numerical scheme.

\begin{lemma}\label{lem: predictor contraction}
	Assume the initial data satisfies $U_0 \in \mathcal{S}$. If the time step size $\tau_n < \tau_{\max}$ for a sufficiently small constant $ \tau_{\max}$,
then there exists a constant $c > 0$ and a contraction factor $\kappa(\tau_n) \in (0, 1)$ such that for sufficiently large $n$ there holds
	\begin{equation*}
	\left\| \mathcal{A}_{\hat{U}_{n+1}}\hat{U}_{n+1}\right\|_a^2 \leqslant \kappa(\tau_n) \left\| \mathcal{A}_{U_n}U_n \right\|_a^2 + c \tau_n \|O_n\|. 
	\end{equation*}
\end{lemma}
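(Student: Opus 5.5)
The approach is to mimic, at the discrete level, the continuous argument for $g(t)$ in the proof of Theorem \ref{thm: exponential convergence}. The Cayley step is a discrete analogue of the flow $\dot U = \mathcal{A}_U U$. So we expand $\mathcal{A}_{\hat U_{n+1}}\hat U_{n+1}$ around $U_n$ to first order in $\tau_n$ and read off a contraction factor from the spectral gap.

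We write $\hat U_{n+1} = U_n + \tau_n D_n$ with $D_n \coloneqq \mathcal{A}_{U_n}\tilde U_{n+\frac12}$. By the norm equivalence in the proof of Theorem \ref{thm: Energy E(Un) decay}, $\|D_n\|_a \leqslant 2\|\mathcal{A}_{U_n}U_n\|_a$, and $\|D_n-\mathcal{A}_{U_n}U_n\|_a = O(\tau_n\|\mathcal{A}_{U_n}U_n\|_a)$. Expanding the cubic map $U\mapsto \mathcal{A}_U U=\mathcal{G}U\langle U,U\rangle - U\langle U,\mathcal{G}U\rangle$ gives
\[
\mathcal{A}_{\hat U_{n+1}}\hat U_{n+1} = \mathcal{A}_{U_n}U_n + \tau_n L_n D_n + R_n ,
\]
where $L_n$ is the Fréchet derivative
\[
L_nV = \mathcal{G}V\langle U_n,U_n\rangle - V\langle U_n,\mathcal{G}U_n\rangle + \mathcal{G}U_n(\langle V,U_n\rangle+\langle U_n,V\rangle) - U_n(\langle V,\mathcal{G}U_n\rangle + \langle U_n,\mathcal{G}V\rangle).
\]
The remainder satisfies $\|R_n\|_a \leqslant C\tau_n^2\|\mathcal{A}_{U_n}U_n\|_a^2$, using the uniform bounds $E(U_n)\leqslant E(U_0)$, $\langle U_n,U_n\rangle\leqslant\langle U_0,U_0\rangle$ (Lemmas \ref{lem: non-expansion}, \ref{lem: operator bound}) and the local Lipschitz estimate of Lemma \ref{lem: Local Lipschitz property}. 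Squaring yields
\[
\|\mathcal{A}_{\hat U_{n+1}}\hat U_{n+1}\|_a^2 = \|\mathcal{A}_{U_n}U_n\|_a^2 + 2\tau_n(\mathcal{A}_{U_n}U_n, L_n\mathcal{A}_{U_n}U_n)_a + O(\tau_n^2\|\mathcal{A}_{U_n}U_n\|_a^2).
\]

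The core step is to bound the cross term, mirroring the decomposition \eqref{eq: g_deriv_decomp}. Put $W=\mathcal{A}_{U_n}U_n$. The terms of $(W,L_nW)_a$ are treated as follows.
\begin{itemize}
\item The first two pieces give $\operatorname{tr}(\langle W,W\rangle\langle U_n,U_n\rangle) - (W, W\langle U_n,\mathcal{G}U_n\rangle)_a$.
\item We replace $\langle U_n,U_n\rangle$ by $I_N$ at the cost of $\|O_n\|\,\|W\|^2$, which is $\leqslant C\|O_n\|$ since $W$ is bounded.
\item The remaining terms involve $\langle W,U_n\rangle$, $\langle W,\mathcal{G}U_n\rangle$ and related pairings. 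For these we use the identity $\langle U_n,\mathcal{A}_{U_n}U_n\rangle=-\langle\mathcal{A}_{U_n}U_n,U_n\rangle$ together with $\langle U_n,\mathcal{A}_{U_n}U_n\rangle + \langle \mathcal{A}_{U_n}U_n,U_n\rangle=0$. The parts not killed by skew-symmetry are bounded as $C\|O_n\|$ plus terms cubic in $\|W\|_a$, exactly as in the continuous proof.
\end{itemize}
By Theorem \ref{thm: convergence of Un}, $U_n\to V_*\tilde Q_*$ strongly, so for $n$ large:
\begin{itemize}
\item $\langle U_n,\mathcal{G}U_n\rangle \geqslant (\frac1{\lambda_N}-\frac\varepsilon4)I_N$;
\item $\langle W,V_*\rangle\to0$, giving $\|W\|^2\leqslant(\frac1{\lambda_{N+1}}+\frac\varepsilon4)\|W\|_a^2$;
\item the cubic terms are at most $\frac\varepsilon4\|W\|_a^2$.
\end{itemize}
Hence $2\tau_n(W,L_nW)_a \leqslant -2\tau_n(\frac1{\lambda_N}-\frac1{\lambda_{N+1}}-\varepsilon)\|W\|_a^2 + c\tau_n\|O_n\|$.

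Combining these bounds gives $\kappa(\tau_n)=1-2\tau_n(\frac1{\lambda_N}-\frac1{\lambda_{N+1}}-\varepsilon)+C\tau_n^2$. This lies in $(0,1)$ once $\tau_n<\tau_{\max}$ with $\tau_{\max}$ small relative to the gap and to $C$. The main obstacle is the cross term. Two points need care there. First, the Fréchet derivative terms coupling $W$ with $U_n$ must be reduced via skew-symmetry to quantities controlled by $\|O_n\|$, since at the discrete level the exact orthogonality $\langle U,U\rangle=I_N$ is unavailable. Second, the spectral-gap Rayleigh bound needs $W$ to be nearly orthogonal to $\mathrm{span}\,V_*$, and this holds only asymptotically. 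The latter is exactly why the statement is restricted to sufficiently large $n$.
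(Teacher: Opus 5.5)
There is one genuine gap, in the spectral-gap step. You justify $\|W\|^2\leqslant(\frac{1}{\lambda_{N+1}}+\frac{\varepsilon}{4})\|W\|_a^2$, where $W\coloneqq\mathcal{A}_{U_n}U_n$, by the limit $\langle W,V_*\rangle\to0$. That limit is automatic, since $W\to0$ in $(H_0^1(\Omega))^N$, and it says nothing about the \emph{ratio} $\|W\|^2/\|W\|_a^2$. For example, $W_n=n^{-1}V_*$ satisfies $\langle W_n,V_*\rangle\to0$, yet $\|W_n\|^2/\|W_n\|_a^2=N/\sum_{i\leqslant N}\lambda_i\geqslant1/\lambda_N$.

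What you need is near-orthogonality relative to $\|W\|$. The ingredient that supplies it is the commutator identity
\[
\langle W,U_n\rangle=\langle U_n,U_n\rangle G_n-G_n\langle U_n,U_n\rangle=O_nG_n-G_nO_n,\qquad G_n\coloneqq\langle U_n,\mathcal{G}U_n\rangle .
\]
It gives $\|\langle W,V_*\tilde Q_*\rangle\|\leqslant C\|O_n\|+\|W\|\,\|U_n-V_*\tilde Q_*\|$. Now split $W=\Pi W+(I-\Pi)W$, where $\Pi$ is the columnwise $L^2$-projection onto $\mathrm{span}\{v_1,\dots,v_N\}$; this projection is also $a$-orthogonal. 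You then get $\|W\|^2\leqslant\lambda_{N+1}^{-1}\|W\|_a^2+2c_\Omega^2\|U_n-V_*\tilde Q_*\|^2\|W\|_a^2+C\|O_n\|^2$. Theorem~\ref{thm: convergence of Un} makes the middle coefficient at most $\varepsilon/4$ for large $n$, and the last term is absorbed into $c\tau_n\|O_n\|$. So the bound is asymptotic because the subspace convergence $U_n\to V_*\tilde Q_*$ multiplies $\|W\|$, not because $\langle W,V_*\rangle$ vanishes. This is the precise form of the paper's remark that $\mathcal{A}_{U_n}U_n$ lies (approximately) in the $L^2$-complement of $U_n$.

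With this repair your argument is valid, and its bookkeeping genuinely differs from the paper's. The paper does not linearize $U\mapsto\mathcal{A}_UU$. Instead it uses the exact Cayley invariance $\langle\hat U_{n+1},\hat U_{n+1}\rangle=\langle U_n,U_n\rangle$ to write $\|\mathcal{A}_{\hat U_{n+1}}\hat U_{n+1}\|_a^2=\operatorname{tr}(\hat K\hat G^2)-\operatorname{tr}(\hat G\langle U_n,U_n\rangle^2)$, with $\hat K=\langle\hat U_{n+1},\hat U_{n+1}\rangle_a$ and $\hat G=\langle\hat U_{n+1},\mathcal{G}\hat U_{n+1}\rangle$. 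It then tracks the two increments separately:
\begin{itemize}
\item the increment of $\hat K$, whose $I_{12}$ part yields the dissipation $-2\tau_n\operatorname{tr}(\langle W,W\rangle_aG_n)$;
\item the increment of $\hat G$, where $I_3$ yields the anti-dissipative trace plus the commutator residual.
\end{itemize}
Your Fréchet terms play the same roles:
\begin{itemize}
\item $\mathcal{G}W\langle U_n,U_n\rangle$ and $-WG_n$ are the two leading traces;
\item the $\mathcal{G}U_n(\cdot)$ term vanishes exactly by skew-symmetry of $\langle W,U_n\rangle$, the infinitesimal form of the Gram invariance;
\item the last term is the residual.
\end{itemize}
Your route mirrors the continuous $g(t)$ computation and makes the $\mathcal{O}(\tau_n^2)$ remainder explicit through the cubic structure. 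The paper's route uses the Cayley structure exactly.

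One point you should write out concerns the last term, $-\operatorname{tr}\bigl(\langle W,U_n\rangle_a(\langle W,\mathcal{G}U_n\rangle+\langle\mathcal{G}U_n,W\rangle)\bigr)$. Plain Cauchy--Schwarz only gives $C\|W\|_a^2$ with a non-small constant, which would destroy the contraction. The cubic bound you assert comes from substituting $U_n=(\mathcal{G}U_n\langle U_n,U_n\rangle-W)G_n^{-1}$, which gives $\langle W,U_n\rangle_a=(O_nG_n-G_nO_n)\langle U_n,U_n\rangle G_n^{-1}-\langle W,W\rangle_aG_n^{-1}$.
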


\begin{proof}
	Expanding the squared norm of $\mathcal{A}_{\hat{U}_{n+1}}\hat{U}_{n+1}$ via the operator definition yields
	\begin{equation*}
		\begin{aligned}
			\left\| \mathcal{A}_{\hat{U}_{n+1}}\hat{U}_{n+1}\right\|_a^2 
			&= \left\| \mathcal{G} \hat{U}_{n+1} \langle \hat{U}_{n+1}, \hat{U}_{n+1} \rangle - \hat{U}_{n+1} \langle \hat{U}_{n+1}, \mathcal{G}\hat{U}_{n+1} \rangle \right\|_a^2
			\\ 
			&= \left\| \mathcal{G} \hat{U}_{n+1} \langle U_n, U_n \rangle - \hat{U}_{n+1} \langle \hat{U}_{n+1}, \mathcal{G}\hat{U}_{n+1} \rangle \right\|_a^2
			\\ 
			&= - \operatorname{tr}\left( \langle \hat{U}_{n+1}, \mathcal{G} \hat{U}_{n+1} \rangle \langle U_n, U_n \rangle^2 \right)
			+ \operatorname{tr}\left( \langle \hat{U}_{n+1}, \hat{U}_{n+1} \rangle_a \langle \hat{U}_{n+1}, \mathcal{G} \hat{U}_{n+1} \rangle^2 \right),
		\end{aligned}
	\end{equation*}
	which can be decomposed to
	\begin{equation*}
		\begin{aligned}
			\left\| \mathcal{A}_{\hat{U}_{n+1}}\hat{U}_{n+1}\right\|_a^2
			&= \operatorname{tr}\left( - \langle U_n, \mathcal{G} U_n \rangle \langle U_n, U_n \rangle^2 + \langle U_n, U_n \rangle_a \langle U_n, \mathcal{G} U_n \rangle^2 \right)
			\\ 
			&\quad + \underbrace{\operatorname{tr}\left( \left(\langle \hat{U}_{n+1}, \hat{U}_{n+1} \rangle_a - \langle U_n, U_n \rangle_a \right) \langle U_n, \mathcal{G} U_n \rangle^2 \right)}_{I_1}
			\\ 
			&\quad  + \underbrace{ \scalebox{0.9}{$ \operatorname{tr}\left[ \left( \langle \hat{U}_{n+1}, \mathcal{G}\hat{U}_{n+1} \rangle - \langle U_n, \mathcal{G}U_n \rangle \right) \left( \langle \hat{U}_{n+1}, \hat{U}_{n+1} \rangle_a \langle \hat{U}_{n+1}, \mathcal{G}\hat{U}_{n+1} \rangle - \langle U_n, U_n \rangle^2 \right) \right] 	$} }_{I_2}
			\\ 
			&\quad + \underbrace{\operatorname{tr}\left[ \left( \langle \hat{U}_{n+1}, \mathcal{G}\hat{U}_{n+1} \rangle - \langle U_n, \mathcal{G}U_n \rangle \right) \langle U_n, \mathcal{G}U_n \rangle\langle \hat{U}_{n+1}, \hat{U}_{n+1} \rangle_a \right] }_{I_3}.
		\end{aligned}
	\end{equation*}
	To analyze the term $I_1$, substituting the relation from the first step of \eqref{equ: numerical scheme} produces the expansion
	\begin{equation*}
		\begin{aligned}
			I_1 &= \operatorname{tr}\left( \langle \hat{U}_{n+1} - U_n, \hat{U}_{n+1} + U_n \rangle_a \langle U_n, \mathcal{G}U_n \rangle^2 \right) 
			\\
			&= \operatorname{tr}\left( \langle \tau_n \mathcal{A}_{U_n}\tilde{U}_{n+\frac{1}{2}}, 2U_n + \tau_n \mathcal{A}_{U_n}\tilde{U}_{n+\frac{1}{2}} \rangle_a \langle U_n, \mathcal{G}U_n \rangle^2 \right)
			\\
			&= 2\tau_n \underbrace{\operatorname{tr}\left( \langle \mathcal{A}_{U_n}(\tilde{U}_{n+\frac{1}{2}} - U_n), U_n \rangle_a \langle U_n, \mathcal{G}U_n \rangle^2 \right)}_{I_{11}} 
			+ 2\tau_n \underbrace{ \operatorname{tr}\left( \langle \mathcal{A}_{U_n}U_n, U_n \rangle_a \langle U_n, \mathcal{G}U_n \rangle^2 \right)}_{I_{12}}
			\\ 
			&\quad + \tau_n^2 \underbrace{\operatorname{tr}\left( \langle \mathcal{A}_{U_n}\tilde{U}_{n+\frac{1}{2}}, \mathcal{A}_{U_n}\tilde{U}_{n+\frac{1}{2}} \rangle_a \langle U_n, \mathcal{G}U_n \rangle^2 \right)}_{I_{13}}.
		\end{aligned}
	\end{equation*}
	We bound each sub-component separately. Adapting the analytical techniques from Theorem \ref{thm: Energy E(Un) decay}, the term $I_{11}$ decomposes into
	\begin{equation*}
		\begin{aligned}
			I_{11} &= \operatorname{tr}\left( \langle \mathcal{A}_{U_n}(\tilde{U}_{n+\frac{1}{2}} - U_n), - \mathcal{A}_{U_n}U_n \rangle_a \langle U_n, \mathcal{G}U_n \rangle \right)
			\\
			&\quad + \operatorname{tr}\left( \langle \tilde{U}_{n+\frac{1}{2}} - U_n, -\mathcal{A}_{U_n}U_n \rangle \langle U_n, U_n \rangle \langle U_n, \mathcal{G}U_n \rangle \right),
		\end{aligned}
	\end{equation*}
	where the first part admits the upper bound
	\begin{equation*}
		\begin{aligned}
			&\left| \operatorname{tr}\left( \langle \mathcal{A}_{U_n}(\tilde{U}_{n+\frac{1}{2}} - U_n), - \mathcal{A}_{U_n}U_n \rangle_a \langle U_n, \mathcal{G}U_n \rangle \right) \right|
			\\
			&\quad \leqslant \frac{\sqrt{2NE(U_n)} \lambda_{\max}(\langle U_n, U_n \rangle)^{3/2}}{2\lambda_1} \tau_n \|\mathcal{A}_{U_n}\tilde{U}_{n+\frac{1}{2}}\|_a \|\mathcal{A}_{U_n}U_n\|_a,
		\end{aligned}
	\end{equation*}
	while the second part similarly satisfies the estimate
	\begin{equation*}
		\begin{aligned}
			&\left| \operatorname{tr}\left( \langle \tilde{U}_{n+\frac{1}{2}} - U_n, -\mathcal{A}_{U_n}U_n \rangle \langle U_n, U_n \rangle \langle U_n, \mathcal{G}U_n \rangle \right) \right|
			\\
			&\quad \leqslant \frac{\sqrt{N} \lambda_{\max}(\langle U_n, U_n \rangle)^2}{2\lambda_1} \tau_n \|\mathcal{A}_{U_n}\tilde{U}_{n+\frac{1}{2}}\|_a \|\mathcal{A}_{U_n}U_n\|_a.
		\end{aligned}
	\end{equation*}
	The term $I_{12}$ is bounded using analogous arguments with Theorem \ref{thm: Energy E(Un) decay} to find
	\begin{equation*}
		\begin{aligned}
			I_{12}  &= - \operatorname{tr}\left( \langle \mathcal{A}_{U_n}U_n, \mathcal{A}_{U_n}U_n \rangle_a  \langle U_n, \mathcal{G}U_n \rangle \right)   - \frac{1}{2}\left\|\left\langle \mathcal{G}U_n, U_n \right\rangle \left\langle U_n, U_n \right\rangle - \left\langle U_n, U_n \right\rangle \left\langle \mathcal{G}U_n, U_n \right\rangle   \right\|^2
			\\ &\leqslant - \operatorname{tr}\left( \langle \mathcal{A}_{U_n}U_n, \mathcal{A}_{U_n}U_n \rangle_a \langle U_n, \mathcal{G}U_n \rangle \right) \leqslant -\lambda_{\min}(\langle U_n, \mathcal{G}U_n \rangle) \|\mathcal{A}_{U_n}U_n\|_a^2.
		\end{aligned}
	\end{equation*}
	The remaining component $I_{13}$ satisfies the inequality
	\begin{equation*}
		\begin{aligned}
			I_{13} &\leqslant \lambda_{\max}(\langle U_n, \mathcal{G}U_n \rangle^2) \|\mathcal{A}_{U_n}\tilde{U}_{n+\frac{1}{2}}\|_a^2 \leqslant \left( \frac{\lambda_{\max}(\langle U_n, U_n \rangle)}{\lambda_1} \right)^2 \|\mathcal{A}_{U_n}\tilde{U}_{n+\frac{1}{2}}\|_a^2.
		\end{aligned}
	\end{equation*}
	Summing these sub-components provides the estimate for $I_1$ expressed as
	\begin{equation*}
		\begin{aligned}
			I_1 &\leqslant -2\tau_n \lambda_{\min}(\langle U_n, \mathcal{G}U_n \rangle) \|\mathcal{A}_{U_n}U_n\|_a^2 + \tau_n^2 \left( \frac{\lambda_{\max}(\langle U_n, U_n \rangle)}{\lambda_1} \right)^2 \|\mathcal{A}_{U_n}\tilde{U}_{n+\frac{1}{2}}\|_a^2
			\\
			&\quad + \tau_n^2 \left( \frac{\sqrt{2NE(U_n)} \lambda_{\max}(\langle U_n, U_n \rangle)^{3/2}}{\lambda_1} + \frac{\sqrt{N} \lambda_{\max}(\langle U_n, U_n \rangle)^2}{\lambda_1} \right) \|\mathcal{A}_{U_n}\tilde{U}_{n+\frac{1}{2}}\|_a \|\mathcal{A}_{U_n}U_n\|_a 
			\\ 
			&\leqslant -2\tau_n \lambda_{\min}(\langle U_n, \mathcal{G}U_n \rangle) \|\mathcal{A}_{U_n}U_n\|_a^2 + \mathcal{O}(\tau_n^2)\|\mathcal{A}_{U_n}U_n\|_a^2.
		\end{aligned}
	\end{equation*}
	Since the second-order and higher-order terms do not play a dominant role for sufficiently small step sizes, we absorb them into the Big $\mathcal{O}$ notation for theoretical clarity. 
	
	Turning to the term $I_2$, we have
	\begin{equation*}
		\begin{aligned}
			\langle U_n, U_n \rangle^2 &= \langle \hat{U}_{n+1}, \hat{U}_{n+1} \rangle^2 = \langle \mathcal{G}^{-1/2}\hat{U}_{n+1}, \mathcal{G}^{1/2}\hat{U}_{n+1} \rangle^2
			\\
			&\leqslant \langle \hat{U}_{n+1}, \mathcal{G}^{-1}\hat{U}_{n+1} \rangle \langle \hat{U}_{n+1}, \mathcal{G}\hat{U}_{n+1} \rangle
			= \langle \hat{U}_{n+1}, \hat{U}_{n+1} \rangle_a \langle \hat{U}_{n+1}, \mathcal{G}\hat{U}_{n+1} \rangle.
		\end{aligned}
	\end{equation*}
	This leads directly to the upper bound for $I_2$ expressed as
	\begin{equation*}
		\begin{aligned}
			I_2 &\leqslant \left\|\langle \hat{U}_{n+1}, \mathcal{G}\hat{U}_{n+1} \rangle - \langle U_n, \mathcal{G}U_n \rangle \right\| \left| \operatorname{tr}\left( \langle \hat{U}_{n+1}, \hat{U}_{n+1} \rangle_a \langle \hat{U}_{n+1}, \mathcal{G}\hat{U}_{n+1} \rangle - \langle U_n, U_n \rangle^2 \right)\right|
			\\
			&= \tau_n \left\| \langle \mathcal{A}_{U_n} \tilde{U}_{n+\frac{1}{2}}, \mathcal{G} \tilde{U}_{n+\frac{1}{2}} \rangle +\langle \mathcal{G} \tilde{U}_{n+\frac{1}{2}},  \mathcal{A}_{U_n} \tilde{U}_{n+\frac{1}{2}} \rangle \right\| 
			\\
			&\quad \cdot \left| \operatorname{tr}\left( \left( \langle \hat{U}_{n+1}, \hat{U}_{n+1} \rangle_a \langle \hat{U}_{n+1}, \mathcal{G}\hat{U}_{n+1} \rangle^2 - \langle U_n, U_n \rangle^2 \langle \hat{U}_{n+1}, \mathcal{G}\hat{U}_{n+1} \rangle \right) \langle \hat{U}_{n+1}, \mathcal{G}\hat{U}_{n+1} \rangle^{-1} \right) \right|
			\\& \leqslant 2\tau_n  \lambda_{\max}\left( \langle \hat{U}_{n+1}, \mathcal{G}\hat{U}_{n+1} \rangle^{-1} \right)  \left\| \langle \mathcal{A}_{U_n} \tilde{U}_{n+\frac{1}{2}}, \mathcal{G} \tilde{U}_{n+\frac{1}{2}} \rangle  \right\| 
			\\ &\quad \cdot \left| \operatorname{tr}\left( \left( \langle \hat{U}_{n+1}, \hat{U}_{n+1} \rangle_a \langle \hat{U}_{n+1}, \mathcal{G}\hat{U}_{n+1} \rangle^2 - \langle U_n, U_n \rangle^2 \langle \hat{U}_{n+1}, \mathcal{G}\hat{U}_{n+1} \rangle \right)  \right) \right|,
		\end{aligned}
	\end{equation*}
	that is,
	\begin{equation*}
		\begin{aligned}
	I_2
			&\leqslant 2\tau_n \lambda_{\max}\left( \langle \hat{U}_{n+1}, \mathcal{G}\hat{U}_{n+1} \rangle^{-1} \right)\lambda_{\max}\left(\left\langle \mathcal{G} \tilde{U}_{n+\frac{1}{2}}, \mathcal{G} \tilde{U}_{n+\frac{1}{2}} \right\rangle\right)^{\frac{1}{2}} \left\|  \mathcal{A}_{U_n} \tilde{U}_{n+\frac{1}{2}}\right\| 
			\left\| \mathcal{A}_{\hat{U}_{n+1}}\hat{U}_{n+1}\right\|_a^2
			\\ 
			&\leqslant 4\tau_n E(\hat{U}_{n+1}) \frac{1}{\lambda_1} \lambda_{\max}\left(\left\langle  \tilde{U}_{n+\frac{1}{2}},  \tilde{U}_{n+\frac{1}{2}} \right\rangle\right)^{\frac{1}{2}}  c_{\Omega} \left\| \mathcal{A}_{U_n} \tilde{U}_{n+\frac{1}{2}} \right\|_a  \left\| \mathcal{A}_{\hat{U}_{n+1}}\hat{U}_{n+1}\right\|_a^2.
		\end{aligned}
	\end{equation*}
	Following the results in Theorem \ref{thm: convergence of Un}, these higher-order terms do not dictate the leading contraction behavior; thus, we approximate $I_2 = \mathcal{O}(\tau_n^2) \|\mathcal{A}_{U_n}U_n\|_a^2$.
	
{
To evaluate $I_3$, it is obtained from the relation $\hat{U}_{n+1} - U_n = \tau_n \mathcal{A}_{U_n}\tilde{U}_{n+\frac{1}{2}}$ that
\begin{equation*}
	\begin{aligned}
		 I_3 = &\tau_n \operatorname{tr}\left( \left( \langle \mathcal{A}_{U_n} \tilde{U}_{n+\frac{1}{2}}, \mathcal{G}\tilde{U}_{n+\frac{1}{2}} \rangle + \langle \mathcal{G}\tilde{U}_{n+\frac{1}{2}}, \mathcal{A}_{U_n} \tilde{U}_{n+\frac{1}{2}} \rangle \right) \langle U_n, \mathcal{G}U_n \rangle\langle \hat{U}_{n+1}, \hat{U}_{n+1} \rangle_a \right) 
		 \\ &+ \mathcal{O}(\tau_n^2)\|\mathcal{A}_{U_n}U_n\|_a^2. 
	\end{aligned}
\end{equation*}
By replacing $\tilde{U}_{n+\frac{1}{2}}$ with $U_n$ and absorbing the $\mathcal{O}(\tau_n)$ difference into the higher-order residual, we simplify this to
$$ I_3 = \tau_n \operatorname{tr}\left( \left( \langle \mathcal{A}_{U_n} U_n, \mathcal{G}U_n \rangle + \langle \mathcal{G}U_n, \mathcal{A}_{U_n} U_n \rangle \right) \langle U_n, \mathcal{G}U_n \rangle\langle \hat{U}_{n+1}, \hat{U}_{n+1} \rangle_a \right) + \mathcal{O}(\tau_n^2)\|\mathcal{A}_{U_n}U_n\|_a^2. $$
Substituting the identity $\mathcal{G}U_n = \mathcal{A}_{U_n}U_n \langle U_n, U_n \rangle^{-1} + U_n \langle U_n, \mathcal{G}U_n \rangle \langle U_n, U_n \rangle^{-1}$ yields
$$ \langle \mathcal{A}_{U_n}U_n, \mathcal{G}U_n \rangle + \langle \mathcal{G}U_n, \mathcal{A}_{U_n}U_n \rangle = 2\langle \mathcal{A}_{U_n}U_n, \mathcal{A}_{U_n}U_n \rangle \langle U_n, U_n \rangle^{-1} + E_n, $$
where the non-symmetric residual matrix is defined by 
$$E_n \coloneqq P_n \langle U_n, \mathcal{G}U_n \rangle \langle U_n, U_n \rangle^{-1} - \langle U_n, U_n \rangle^{-1} \langle U_n, \mathcal{G}U_n \rangle P_n,$$
 with the skew-symmetric matrix $P_n \coloneqq \langle \mathcal{A}_{U_n}U_n, U_n \rangle$.
Substituting this decomposition into $I_3$ gives
\begin{equation*}
	\begin{aligned} I_3 = & 2\tau_n \operatorname{tr}\left( \langle \mathcal{A}_{U_n}U_n, \mathcal{A}_{U_n}U_n \rangle \langle U_n, U_n \rangle^{-1} \langle U_n, \mathcal{G}U_n \rangle \langle \hat{U}_{n+1}, \hat{U}_{n+1} \rangle_a \right) \\ & + \tau_n \operatorname{tr}\left( E_n \langle U_n, \mathcal{G}U_n \rangle \langle \hat{U}_{n+1}, \hat{U}_{n+1} \rangle_a \right) + \mathcal{O}(\tau_n^2)\|\mathcal{A}_{U_n}U_n\|_a^2. \end{aligned}
\end{equation*}
Defining $M_n \coloneqq \langle U_n, \mathcal{G}U_n \rangle \langle \hat{U}_{n+1}, \hat{U}_{n+1} \rangle_a$, there holds
$$ \operatorname{tr}\left( E_n M_n \right) = \operatorname{tr}\left( P_n \underbrace{\left( \langle U_n, \mathcal{G}U_n \rangle \langle U_n, U_n \rangle^{-1} M_n - M_n \langle U_n, U_n \rangle^{-1} \langle U_n, \mathcal{G}U_n \rangle \right) }_{R_n}\right),$$
where $\|R_n\|$ is uniformly bounded. Recalling that $\mathcal{A}_{U_n}U_n = \mathcal{G}U_n \langle U_n, U_n \rangle - U_n \langle U_n, \mathcal{G}U_n \rangle$, we rewrite $P_n$ as the commutator
$$ P_n = \langle \mathcal{G}U_n, U_n \rangle \langle U_n, U_n \rangle - \langle U_n, U_n \rangle \langle U_n, \mathcal{G}U_n \rangle = [\langle U_n, \mathcal{G}U_n \rangle, O_n]. $$
This establishes the bound $\|P_n\| \leqslant 2 \|\langle U_n, \mathcal{G}U_n \rangle\| \|O_n\| \leqslant c \|O_n\|$. Therefore, $|\operatorname{tr}(E_n M_n)| \leqslant \sqrt{N}\|P_n\|\|R_n\| \leqslant c \|O_n\|$, leading to the final estimate for $I_3$:
$$ I_3 \leqslant 2\tau_n \operatorname{tr}\left( \langle \mathcal{A}_{U_n}U_n, \mathcal{A}_{U_n}U_n \rangle \langle U_n, U_n \rangle^{-1} \langle U_n, \mathcal{G}U_n \rangle \langle \hat{U}_{n+1}, \hat{U}_{n+1} \rangle_a \right) + c\tau_n \|O_n\| + \mathcal{O}(\tau_n^2)\|\mathcal{A}_{U_n}U_n\|_a^2. $$

}

	Since $\lim\limits_{n \to \infty}\langle U_n,U_n \rangle_a = \langle V_*Q_*, V_*Q_* \rangle_a =Q_*^\top \Lambda Q_*$, for sufficiently large $n$, there exists an $\varepsilon > 0$ such that 
	\begin{equation*}\label{eq: eigenvalue estimate}
		\langle U_n, U_n \rangle \geqslant \left(\frac{1}{\lambda_N}-\frac{\varepsilon}{2}\right) \langle U_n, U_n \rangle_a .
	\end{equation*}
	 Furthermore, because $\mathcal{A}_{U_n}U_n$ belongs to the orthogonal complement of $U_n$, its Rayleigh quotient bound yields
	\begin{equation*}
		\langle \mathcal{A}_{U_n}U_n, \mathcal{A}_{U_n}U_n \rangle \leqslant ( \frac{1}{\lambda_{N+1}}+ \frac{\varepsilon}{2} ) \langle \mathcal{A}_{U_n}U_n, \mathcal{A}_{U_n}U_n \rangle_a.
	\end{equation*}
	Substituting these spectral bounds into $I_3$ produces the inequality
	\begin{equation*}
		\begin{aligned}
			I_3 %&\leqslant 2\tau_n ( \frac{1}{\lambda_{N+1}}+ \frac{\varepsilon}{2} ) \operatorname{tr}\left( \langle \mathcal{A}_{U_n}U_n, \mathcal{A}_{U_n}U_n \rangle_a \langle U_n, U_n \rangle^{-1}\langle U_n, \mathcal{G}U_n \rangle\langle \hat{U}_{n+1}, \hat{U}_{n+1} \rangle_a \right) + \mathcal{O}(\tau_n^2) \|\mathcal{A}_{U_n}U_n\|_a^2	\\ 
			&\leqslant 2\tau_n  \frac{1}{\frac{1}{\lambda_N}-\frac{\varepsilon}{2}}( \frac{1}{\lambda_{N+1}}+ \frac{\varepsilon}{2} )\operatorname{tr}\left( \langle \mathcal{A}_{U_n}U_n, \mathcal{A}_{U_n}U_n \rangle_a \langle U_n, \mathcal{G}U_n \rangle\right) + c\tau_n \|O_n\| + \mathcal{O}(\tau_n^2) \left\| \mathcal{A}_{U_n}U_n \right\|_a^2.
		\end{aligned}
	\end{equation*}
	
	Combining the dominant dissipative term $2\tau_n I_{12}$ with the estimate for $I_3$ establishes the bound
	\begin{equation*}
		\begin{aligned}
		&	\left\| \mathcal{A}_{\hat{U}_{n+1}}\hat{U}_{n+1}\right\|_a^2- \left\| \mathcal{A}_{U_n}U_n \right\|_a^2 
		\\	&\leqslant 2\tau_n \left(-1 + \frac{1}{\frac{1}{\lambda_N}-\frac{\varepsilon}{2}}( \frac{1}{\lambda_{N+1}}+ \frac{\varepsilon}{2} ) \right) \lambda_{\min}\left(\langle U_n, \mathcal{G}U_n \rangle\right) \left\| \mathcal{A}_{U_n}U_n \right\|_a^2 + c\tau_n \|O_n\| + \mathcal{O}(\tau_n^2) \left\| \mathcal{A}_{U_n}U_n \right\|_a^2 
			\\ 
			&\leqslant 2\tau_n \left(-1 + \frac{1}{\frac{1}{\lambda_N}-\frac{\varepsilon}{2}}( \frac{1}{\lambda_{N+1}}+ \frac{\varepsilon}{2} ) \right) \left( \frac{1}{\lambda_N}-\frac{\varepsilon}{2}\right) \left\| \mathcal{A}_{U_n}U_n \right\|_a^2 + c\tau_n \|O_n\| + \mathcal{O}(\tau_n^2) \left\| \mathcal{A}_{U_n}U_n \right\|_a^2 
			\\
			&= -2\tau_n \left( \frac{1}{\lambda_N} - \frac{1}{\lambda_{N+1}}-\varepsilon \right) \left\| \mathcal{A}_{U_n}U_n \right\|_a^2 + c\tau_n \|O_n\| + \mathcal{O}(\tau_n^2) \left\| \mathcal{A}_{U_n}U_n \right\|_a^2.
		\end{aligned}
	\end{equation*}
	Since the spectral gap guarantees $ \frac{1}{\lambda_N} - \frac{1}{\lambda_{N+1}}-\varepsilon > 0$ for $\varepsilon \in \left(0,  \frac{1}{\lambda_N} - \frac{1}{\lambda_{N+1}} \right)$, the linear term provides strictly negative dissipation. Choosing $\tau_{\max}$ sufficiently small with $\tau_n <\tau_{\max}$ to absorb the $\mathcal{O}(\tau_n^2)$ higher-order terms guarantees the existence of a contraction factor 
	\begin{equation*}
		\kappa(\tau_n) = 1 - 2\tau_n \left(  \frac{1}{\lambda_N} - \frac{1}{\lambda_{N+1}}-\varepsilon  \right) + \mathcal{O}(\tau_n^2) \in (0, 1),
	\end{equation*}
	satisfying
	\begin{equation*}
		\left\| \mathcal{A}_{\hat{U}_{n+1}}\hat{U}_{n+1}\right\|_a^2 \leqslant \kappa(\tau_n) \left\| \mathcal{A}_{U_n}U_n \right\|_a^2 + c\tau_n \|O_n\| .
	\end{equation*}
\end{proof}

To complete the contraction analysis for a full iteration, we subsequently verify that the gradient norm does not expand during the second step.

\begin{lemma}\label{lem: gradient decay corrector}
	Assume that $U_0 \in \mathcal{M}_{\geqslant}^N$. If the time step size $\tau_n < \frac{\lambda_1}{2\lambda_{\max}(\langle U_n, U_n \rangle)}$, then
	\begin{equation*}
		\begin{aligned}
			\left\| \nabla_G E_{\mathcal{G}}(U_{n+1})\right\|_a^2 \leqslant	\left\| \nabla_G E_{\mathcal{G}}(\hat{U}_{n+1})\right\|_a^2.
		\end{aligned}
	\end{equation*}
\end{lemma}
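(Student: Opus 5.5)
The plan is to reduce the statement to a matrix inequality, as in Lemmas \ref{lem: non-expansion} and \ref{lem: corrector monotonicity}. Write $\hat U:=\hat U_{n+1}$, $O:=O_n$, $A:=A_n$, $B:=B_n$, and
\[
U_{n+1}=\hat U-\tau_n\mathcal{G}\hat U O .
\]
For any $U$, set $S(U):=\langle U,\mathcal{G}U\rangle$. Using $(\mathcal{G}Ux,\mathcal{G}Uy)_a=(\mathcal{G}Ux,Uy)$ and $(\mathcal{G}Ux,Uy)_a=(Ux,Uy)$, the gradient norm becomes
\[
\|\nabla_G E_{\mathcal{G}}(U)\|_a^2=\operatorname{tr}S(U)-2\operatorname{tr}\big(\langle U,U\rangle S(U)\big)+\operatorname{tr}\big(\langle U,U\rangle_a S(U)^2\big).
\]
So it suffices to track how the three matrices $\langle\cdot,\cdot\rangle$, $\langle\cdot,\cdot\rangle_a$ and $S(\cdot)$ change under the corrector step.

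All three updates are explicit polynomials in $\tau_n$:
\begin{align*}
\langle U_{n+1},U_{n+1}\rangle&=\langle\hat U,\hat U\rangle-\tau_n(AO+OA)+\tau_n^2OBO,\\
\langle U_{n+1},U_{n+1}\rangle_a&=\langle\hat U,\hat U\rangle_a-\tau_n\big(\langle\hat U,\hat U\rangle O+O\langle\hat U,\hat U\rangle\big)+\tau_n^2OAO,\\
S(U_{n+1})&=A-\tau_n(BO+OB)+\tau_n^2O\langle\hat U,\mathcal{G}^3\hat U\rangle O .
\end{align*}
Here $\langle\hat U,\hat U\rangle=\langle U_n,U_n\rangle=I_N+O$ by the Cayley step. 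The corrector step therefore shrinks all three matrices in the Loewner order:
\begin{itemize}
\item for $\langle\cdot,\cdot\rangle$, by Lemma \ref{lem: non-expansion};
\item for $S$, by Lemma \ref{lem: corrector monotonicity};
\item for $\langle\cdot,\cdot\rangle_a$, by the same Weyl argument with bounds from Lemma \ref{lem: matrix bounds}, since $OAO\leqslant\frac{\lambda_{\max}}{\lambda_1}\,O\langle\hat U,\hat U\rangle O$ under $\tau_n<\lambda_1/(2\lambda_{\max})$.
\end{itemize}

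I would then substitute these expansions into the gradient formula and collect powers of $\tau_n$. The first-order coefficient should be nonpositive. The second- and higher-order terms should be dominated by a fraction of the first-order term using $B\leqslant A/\lambda_1$, $O\leqslant\langle U_n,U_n\rangle$, $\langle\hat U,\mathcal{G}^3\hat U\rangle\leqslant B/\lambda_1$ and the step restriction. This mirrors the factor $(-2+\tau_n\lambda_{\max}/\lambda_1)$ that appears in Lemmas \ref{lem: non-expansion} and \ref{lem: corrector monotonicity}. Every difference term carries at least one factor of $O$, so the first-order coefficient should be a trace of the form $-\operatorname{tr}\big(O\cdot(\text{PSD})\big)$, and the whole difference is bounded by $\tau_n(-c+\mathcal{O}(\tau_n))\operatorname{tr}(O\,\cdot)$.

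\textbf{Main obstacle: the cross term.} The term $-2\operatorname{tr}(\langle U,U\rangle S)$ carries the opposite sign. Shrinking $\langle U,U\rangle$ and $S$ makes it larger, so monotonicity of the individual matrices is not sufficient. To handle this, I would group the first-order variation as
\[
\operatorname{tr}\big(\delta S\,(I-2\langle U,U\rangle+\langle U,U\rangle_a S+S\langle U,U\rangle_a)\big)-2\operatorname{tr}\big(\delta\langle U,U\rangle\,S\big)+\operatorname{tr}\big(\delta\langle U,U\rangle_a S^2\big),
\]
and then use $\langle U,U\rangle^2\leqslant\langle U,U\rangle_a S$, the Cauchy--Schwarz inequality in the form used for the term $I_2$ of Lemma \ref{lem: predictor contraction}. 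With it I would show that the combination multiplying $\delta S$ dominates $\langle U,U\rangle$, so the terms with wrong sign are absorbed.

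If this grouping fails to close, the fallback is to work directly with the $\mathcal{A}$-form. I would write $\nabla_G E_{\mathcal{G}}(U_{n+1})$ as a perturbation of $\nabla_G E_{\mathcal{G}}(\hat U)$ by $-\tau_n\mathcal{G}(\cdot)O$-type terms. I would then show that the cross inner product is nonpositive and the quadratic remainder is controlled by the step restriction.
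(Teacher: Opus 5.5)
\textbf{Verdict.} Your proposal has a genuine gap, and for $N\geqslant 2$ it cannot be closed.

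\textbf{What is right.} Your reduction is sound. The formula $\|\nabla_G E_{\mathcal G}(U)\|_a^2=\operatorname{tr}S-2\operatorname{tr}(\langle U,U\rangle S)+\operatorname{tr}(\langle U,U\rangle_a S^2)$ is correct, and so are your three update formulas. The paper's proof is this same three-term split: $-\operatorname{tr}I_3$, the two middle traces, and $\langle I_1,I_2\rangle_a$. You have also located the real difficulty, and the paper does not get past it either. There, $(\mathcal G(U_{n+1}-\hat U_{n+1}),I_2)_a$ is written as $\langle U_{n+1}-\hat U_{n+1},I_1\rangle$. With the correct pairing, the two middle traces sum to $2\operatorname{tr}\big(\langle\hat U_{n+1},\hat U_{n+1}\rangle A_n-\langle U_{n+1},U_{n+1}\rangle\langle U_{n+1},\mathcal G U_{n+1}\rangle\big)$. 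That is exactly your cross term, and under Lemmas \ref{lem: non-expansion} and \ref{lem: corrector monotonicity} it is $\geqslant 0$, not $\leqslant 0$.

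\textbf{Where the plan fails.} The decisive step, nonpositivity of the total first-order coefficient, is only announced. The mechanism you suggest cannot deliver it when $N\geqslant 2$.
\begin{itemize}
\item The increments $-\tau_n(AO+OA)$ and $-\tau_n(BO+OB)$ are symmetrized products of noncommuting positive semidefinite matrices, so they are indefinite in general. A Loewner bound on the matrix multiplying $\delta S$ therefore controls nothing.
\item The same defect undermines your first two bullets. The cited lemmas use $\lambda_{\max}(-(A_nO_n+O_nA_n))=\lambda_{\max}(-2A_nO_n)$, which is false because $A_nO_n$ is not symmetric.
\item Only your $\langle\cdot,\cdot\rangle_a$ bullet survives, because $\langle\hat U,\hat U\rangle=I_N+O$ commutes with $O$.
\item The inequality $\langle U,U\rangle^2\leqslant\langle U,U\rangle_a S$ is not a Loewner inequality, since its right-hand side is not symmetric. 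The valid form is $\langle U,U\rangle S^{-1}\langle U,U\rangle\leqslant\langle U,U\rangle_a$.
\end{itemize}

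\textbf{The inequality itself fails for $N\geqslant 2$.} Write $L=\langle\hat U,\hat U\rangle_a$. For small $O$, the coefficient of $\tau_n$ in the change of the squared gradient norm is $2\operatorname{tr}(OZ)+\mathcal O(\|O\|^2)$, with $Z=B+S^2-\tfrac12(BSL+LSB+SLB+BLS)$. This $Z$ can have a positive eigenvalue. Take eigenpairs $(\lambda,\phi)$ with $\lambda=1,2,10$. Let $W$ be the orthonormalization of $(\phi_1+\epsilon\phi_3,\ \phi_2+\epsilon\phi_3)$. At $W$ one gets $Z=-\epsilon^2\begin{pmatrix}17.01&7.11\\7.11&1.76\end{pmatrix}+\mathcal O(\epsilon^4)$. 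This matrix has negative determinant, which I confirmed by an exact computation at $\epsilon=0.1$. Now set $U_n=W(I_N+\eta zz^\top)^{1/2}$, with $z$ the positive eigenvector and $\eta\ll\epsilon^2$. Then $U_n\in\mathcal M_{\geqslant}^N$, and for small $\tau_n$ (which the hypothesis allows) $\|\nabla_G E_{\mathcal G}(U_{n+1})\|_a>\|\nabla_G E_{\mathcal G}(\hat U_{n+1})\|_a$. So no refinement of your grouping can close the argument for $N\geqslant 2$.

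\textbf{The case $N=1$.} Here your idea does work at first order. The coefficient is $-2\tau_n o\,[B(1-2m+2LS)+(m-2)S^2]$ with $m=1+o$. Using $LS\geqslant m^2$ and $mB\geqslant S^2$ gives $[\cdots]\geqslant S^2(3m-1)(m-1)/m\geqslant 0$. Even in this case, the higher-order terms in $\tau_n$ still need an actual estimate rather than the expectation that they ``should be dominated''.
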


\begin{proof}
	To streamline the notation, we introduce the auxiliary terms
	\begin{equation*}
		\begin{aligned}
			I_1 &= -U_{n+1} \left\langle U_{n+1}, \mathcal{G}U_{n+1}\right\rangle +\hat{U}_{n+1}\left\langle \hat{U}_{n+1}, \mathcal{G}\hat{U}_{n+1} \right\rangle,
			\\ 
			I_2 &= -U_{n+1} \left\langle U_{n+1}, \mathcal{G}U_{n+1}\right\rangle -\hat{U}_{n+1}\left\langle \hat{U}_{n+1}, \mathcal{G}\hat{U}_{n+1} \right\rangle,
			\\ 
			I_3 &= \left\langle \hat{U}_{n+1}, \mathcal{G}\hat{U}_{n+1} \right\rangle - \left\langle U_{n+1}, \mathcal{G}U_{n+1}\right\rangle.
		\end{aligned}
	\end{equation*}
	The difference of the squared gradient norms expands to
	\begin{equation*}
		\begin{aligned}
			&\left\| \nabla_G E_{\mathcal{G}}(U_{n+1})\right\|_a^2 - \left\| \nabla_G E_{\mathcal{G}}(\hat{U}_{n+1})\right\|_a^2
			\\	 
			&\quad = \left( \nabla_G E_{\mathcal{G}}(U_{n+1})- \nabla_G E_{\mathcal{G}}(\hat{U}_{n+1}) , \nabla_G E_{\mathcal{G}}(U_{n+1})+\nabla_G E_{\mathcal{G}}(\hat{U}_{n+1}) \right)_a
			\\ 
			&\quad = \left( \mathcal{G}\left(U_{n+1} - \hat{U}_{n+1}\right) + I_1, \mathcal{G}\left(U_{n+1} + \hat{U}_{n+1}\right) + I_2 \right)_a
			\\ 
			&\quad = \operatorname{tr}\left(-I_3 + \left\langle U_{n+1} - \hat{U}_{n+1}, I_1 \right\rangle + \left\langle U_{n+1} + \hat{U}_{n+1}, I_1 \right\rangle + \left\langle I_1, I_2 \right\rangle_a\right).
		\end{aligned}
	\end{equation*}
	
	Lemma \ref{lem: corrector monotonicity} guarantees $I_3 \geqslant 0$. We can rewrite $I_1$ and $I_2$ as
	\begin{equation*}
		\begin{aligned}
			I_1 &= -\left(U_{n+1} - \hat{U}_{n+1}\right) \left\langle U_{n+1}, \mathcal{G}U_{n+1}\right\rangle +\hat{U}_{n+1} \cdot I_3,
			\\
			I_2 &= -\left(U_{n+1} +\hat{U}_{n+1}\right) \left\langle U_{n+1}, \mathcal{G}U_{n+1}\right\rangle -\hat{U}_{n+1} \cdot I_3.
		\end{aligned}
	\end{equation*}
	Substituting the relation from the second step of \eqref{equ: numerical scheme} into the difference
	\begin{equation*}
		\begin{aligned}
			&\operatorname{tr}\left( \left\langle U_{n+1} - \hat{U}_{n+1}, I_1 \right\rangle + \left\langle U_{n+1} + \hat{U}_{n+1}, I_1 \right\rangle \right)
			\\ 
			&\quad = - \operatorname{tr}\left( \left\langle U_{n+1} - \hat{U}_{n+1}, U_{n+1} - \hat{U}_{n+1}\right\rangle \left\langle U_{n+1}, \mathcal{G}U_{n+1}\right\rangle \right)
			\\ 
			&\quad \quad - \operatorname{tr}\left( \left\langle U_{n+1}+\hat{U}_{n+1}, U_{n+1} +\hat{U}_{n+1}\right\rangle \left\langle U_{n+1}, \mathcal{G}U_{n+1}\right\rangle\right) - 2\operatorname{tr}\left( \left\langle \hat{U}_{n+1}, \hat{U}_{n+1} \right\rangle I_3 \right)
			\\ 
			&\quad \leqslant 0.
		\end{aligned}
	\end{equation*}
	For the remaining inner product $\left\langle I_1, I_2 \right\rangle_a$, exploiting the established inequality $\left\langle \hat{U}_{n+1}, \mathcal{G}\hat{U}_{n+1} \right\rangle \geqslant \left\langle U_{n+1}, \mathcal{G}U_{n+1}\right\rangle$ provides the bound
	\begin{equation*}
		\begin{aligned}
			\left\langle I_1, I_2 \right\rangle_a 
			&= \operatorname{tr}\left( \left\langle U_{n+1}, U_{n+1}\right\rangle_a \left\langle U_{n+1}, \mathcal{G}U_{n+1}\right\rangle^2 - \left\langle \hat{U}_{n+1}, \hat{U}_{n+1} \right\rangle_a \left\langle \hat{U}_{n+1}, \mathcal{G}\hat{U}_{n+1} \right\rangle^2 \right)
			\\ 
			&\leqslant \operatorname{tr}\left( \left( \left\langle U_{n+1}, U_{n+1}\right\rangle_a - \left\langle \hat{U}_{n+1}, \hat{U}_{n+1} \right\rangle_a\right) \left\langle U_{n+1}, \mathcal{G}U_{n+1}\right\rangle^2 \right)
			\\ 
			&\leqslant \lambda_{\max}\left( \left\langle U_{n+1}, \mathcal{G}U_{n+1}\right\rangle^2\right) \operatorname{tr}\left( \left\langle U_{n+1}, U_{n+1}\right\rangle_a - \left\langle \hat{U}_{n+1}, \hat{U}_{n+1} \right\rangle_a \right)
			\\ 
			&= 2 \lambda_{\max}\left( \left\langle U_{n+1}, \mathcal{G}U_{n+1}\right\rangle^2\right) \left(E(U_{n+1}) - E(\hat{U}_{n+1})\right)
			\\ 
			&\leqslant 0.
		\end{aligned}
	\end{equation*}
	Aggregating these trace estimates confirms the monotonic decay
	\begin{equation*}
		\begin{aligned}
			\left\| \nabla_G E_{\mathcal{G}}(U_{n+1})\right\|_a^2 - \left\| \nabla_G E_{\mathcal{G}}(\hat{U}_{n+1})\right\|_a^2 \leqslant 0.
		\end{aligned}
	\end{equation*}
\end{proof}
{
We define the uniform upper bound of the contraction factor over the valid time steps as
\begin{equation*}
	\kappa_{\max} \coloneqq \sup_{\tau \in [\tau_{\min}, \tau_{\max}]} \kappa(\tau) = \sup_{\tau \in [\tau_{\min}, \tau_{\max}]} \left( 1 - 2\tau \left(  \frac{1}{\lambda_N} - \frac{1}{\lambda_{N+1}}-\varepsilon  \right) + \mathcal{O}(\tau^2) \right).
\end{equation*}
To formalize the global exponential convergence, we introduce the base decay rate
\begin{equation*}
	\hat{\kappa} \coloneqq \kappa_{\max}^{1/4} \in (0, 1).
\end{equation*}

\begin{theorem}\label{thm: exponential rate of Un}
	Assume the initial data satisfies $U_0 \in \mathcal{S}$ and the time step sequence $\{\tau_n\}$ resides within an interval $[\tau_{\min}, \tau_{\max}]$ with $\tau_{\min} > 0$. Then, there exists a constant $C > 0$ such that, for sufficiently large $n$, the following asymptotic estimates hold
	\begin{equation*}
		\begin{aligned}
			& \left\|\nabla_G E_{\mathcal G}(U_n)\right\|_a \leqslant C \hat{\kappa}^n,
			\\
			& \left\| U_n - V_* Q_* \right\|_a \leqslant C \hat{\kappa}^n,
			\\
			& E(U_n) - E(V_*) \leqslant C \hat{\kappa}^{2n}.
		\end{aligned}
	\end{equation*}
\end{theorem}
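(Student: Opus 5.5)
The plan is to combine the two-stage contraction estimates, Lemma \ref{lem: predictor contraction} and Lemma \ref{lem: gradient decay corrector}, with the exponential decay of the orthogonality error, Theorem \ref{thm: asymptotic orthogonality of Un}. This yields a perturbed linear recursion for $g_n \coloneqq \|\nabla_G E_{\mathcal{G}}(U_n)\|_a^2$, which we then solve.

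\textbf{Step 1: link $\mathcal{A}_{U_n}U_n$ to the gradient.} Observe that
\begin{equation*}
\mathcal{A}_{U_n}U_n = \nabla_G E_{\mathcal{G}}(U_n) + \mathcal{G}U_n O_n .
\end{equation*}
Hence
\begin{equation*}
\bigl|\,\|\mathcal{A}_{U_n}U_n\|_a - \|\nabla_G E_{\mathcal{G}}(U_n)\|_a\bigr| \leqslant c\|O_n\|,
\end{equation*}
where $c$ is uniform in $n$ because $E(U_n)\leqslant E(U_0)$ and $\langle U_n,U_n\rangle \leqslant \langle U_0,U_0\rangle$ (Lemma \ref{lem: non-expansion}). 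The same identity holds at $\hat U_{n+1}$, since $\langle \hat U_{n+1},\hat U_{n+1}\rangle=\langle U_n,U_n\rangle$.

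\textbf{Step 2: derive a one-step recursion.} Lemma \ref{lem: gradient decay corrector} gives $g_{n+1}\leqslant \|\nabla_G E_{\mathcal{G}}(\hat U_{n+1})\|_a^2$. By Step 1 this is at most $\|\mathcal{A}_{\hat U_{n+1}}\hat U_{n+1}\|_a^2 + c\|O_n\|$, using boundedness of all quantities to linearize the squares. Lemma \ref{lem: predictor contraction} then bounds this by $\kappa(\tau_n)\|\mathcal{A}_{U_n}U_n\|_a^2 + c\tau_n\|O_n\|$. Applying Step 1 once more gives
\begin{equation*}
g_{n+1} \leqslant \kappa_{\max}\, g_n + c\,\|O_n\|, \qquad n\geqslant n_0,
\end{equation*}
where $n_0$ is the index beyond which Lemma \ref{lem: predictor contraction} applies; it exists by Theorem \ref{thm: convergence of Un}. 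Theorem \ref{thm: asymptotic orthogonality of Un} gives $\|O_n\|\leqslant \|O_0\|\,\omega_{\max}^{n/2}$ with $\omega_{\max}=1-\tau_{\min}/E(U_0)<1$.

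\textbf{Step 3: solve the recursion.} Unrolling yields
\begin{equation*}
g_n \leqslant C\bigl(\kappa_{\max}^{\,n} + n\,\max(\kappa_{\max},\omega_{\max}^{1/2})^n\bigr).
\end{equation*}
This is where the main obstacle lies. The orthogonality forcing term $\|O_n\|$ enters only to the first power, so it decays like $\omega_{\max}^{n/2}$, and nothing stated guarantees this is faster than $\kappa_{\max}^{n}$. I would handle this by the choice $\hat\kappa=\kappa_{\max}^{1/4}$, which targets only $g_n\lesssim \kappa_{\max}^{n/2}$. It therefore suffices that $\omega_{\max}^{1/2}\leqslant \kappa_{\max}^{1/2}$, possibly after shrinking $\tau_{\max}$ or enlarging $\kappa_{\max}$, or equivalently absorbing the polynomial factor $n$ into the margin between the two rates. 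Should that comparison fail, I would instead sharpen the orthogonality rate. Near the limit, $A_n \to \langle V_*,\mathcal{G}V_*\rangle \geqslant \lambda_N^{-1}$, exactly as in the continuous proof of Theorem \ref{thm: exponential convergence}. This gives $\|O_n\|$ decay with factor $1-2\tau_n/\lambda_N+\mathcal{O}(\tau_n^2)$, which beats the gradient rate because $1/\lambda_N > 1/\lambda_N-1/\lambda_{N+1}$. Either way we obtain $\|\nabla_G E_{\mathcal{G}}(U_n)\|_a\leqslant C\hat\kappa^{\,n}$, enlarging $C$ to cover $n<n_0$.

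\textbf{Step 4: transfer to the iterates and the energy.} Telescope $U_n - V_*\tilde Q_* = \sum_{k\geqslant n}(U_k-U_{k+1})$, which is valid since $U_k\to V_*\tilde Q_*$ by Theorem \ref{thm: convergence of Un}. From the scheme, and using the norm equivalence for $\mathcal{A}_{U_k}\tilde U_{k+\frac12}$ from the proof of Theorem \ref{thm: Energy E(Un) decay},
\begin{equation*}
\|U_k-U_{k+1}\|_a \leqslant 2\tau_k\|\mathcal{A}_{U_k}U_k\|_a + c\tau_k\|O_k\| \leqslant C\hat\kappa^{\,k}.
\end{equation*}
Summing the geometric series gives the second estimate. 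For the energy, telescope
\begin{equation*}
E(U_n)-E(V_*)=\sum_{k\geqslant n}\bigl(E(U_k)-E(U_{k+1})\bigr).
\end{equation*}
The first-stage contribution is $\mathcal{O}(\tau_k\|\mathcal{A}_{U_k}U_k\|_a^2)\leqslant C\hat\kappa^{2k}$, as in \eqref{eq: energy difference}. The second-stage contribution is bounded by $c\tau_k\operatorname{tr}(\langle U_k,U_k\rangle O_k)\leqslant c\|O_k\|$. Controlling the latter by $\hat\kappa^{2k}$ again requires the orthogonality rate to dominate, which is the same comparison as in Step 3. Summing gives $E(U_n)-E(V_*)\leqslant C\hat\kappa^{2n}$.
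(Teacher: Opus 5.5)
Your argument is essentially sound and has the same skeleton as the paper's proof. The paper obtains $\|O_n\|\leqslant C\hat\kappa^{2n}$ from the local bound $A_n\geqslant(\frac{1}{\lambda_N}-\frac{\varepsilon}{2})I_N$ in \eqref{eq: orthogonality error realtion}, feeds it into Lemma~\ref{lem: predictor contraction}, solves the perturbed recursion, and sums $\|U_{k+1}-U_k\|_a$. Your Step~2 is more careful than the paper. The paper writes $\|\mathcal{A}_{\hat U_{n+1}}\hat U_{n+1}\|_a^2\leqslant\hat\kappa^4\|\mathcal{A}_{U_n}U_n\|_a^2+C\hat\kappa^{2n}$ and then treats it as a recursion for $\|\mathcal{A}_{U_n}U_n\|_a^2$, never accounting for the corrector step. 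Your identity $\mathcal{A}_UU=\nabla_GE_{\mathcal{G}}(U)+\mathcal{G}U(\langle U,U\rangle-I_N)$, together with $\langle\hat U_{n+1},\hat U_{n+1}\rangle=\langle U_n,U_n\rangle$ and Lemma~\ref{lem: gradient decay corrector}, closes that loop at the price of an $\mathcal{O}(\|O_n\|)$ term. For the energy the routes differ. The paper does not telescope; it expands $E(U_n)-E(V_*)$ about $V_*Q_*$ and uses the eigen-relation to write the linear term through $O_n-\langle U_n-V_*Q_*,U_n-V_*Q_*\rangle$. That way it needs only the two bounds it has already proved.

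Two points need repair.

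First, the primary route of your Step~3, via the global factor $\omega_{\max}=1-\tau_{\min}/E(U_0)$, cannot be rescued by shrinking $\tau_{\max}$. To leading order both $1-\omega_{\max}$ and $1-\kappa_{\max}$ are linear in $\tau_{\min}$, so $\omega_{\max}\leqslant\kappa_{\max}$ amounts to $1/E(U_0)\gtrsim 2(\frac{1}{\lambda_N}-\frac{1}{\lambda_{N+1}}-\varepsilon)$. This fails in general: in the harmonic-oscillator test with $N=15$ one has $E(U_0)\geqslant E(V_*)=27.5$, while $2(\frac15-\frac16)=\frac1{15}$. Enlarging $\kappa_{\max}$ is not an option either, since it changes $\hat\kappa$.

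Your fallback is the right argument and is exactly the paper's first step, but the bookkeeping needs fixing. The quantity $1-2\tau_n(\frac{1}{\lambda_N}-\frac{\varepsilon}{2})$ is the factor for $\|O_n\|^2$, not $\|O_n\|$. All you need is that it is $\leqslant\kappa_{\max}=\hat\kappa^4$, which gives $\|O_n\|\leqslant C\hat\kappa^{2n}$. The forcing does not, and in general cannot, beat $\kappa_{\max}^n$; it only has to match $\kappa_{\max}^{n/2}$. Since $\hat\kappa^4<\hat\kappa^2$, unrolling $g_{n+1}\leqslant\hat\kappa^4g_n+C\hat\kappa^{2n}$ exactly gives $g_n\leqslant C\hat\kappa^{2n}$ with no factor $n$. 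That factor in your unrolled bound appears only when the two rates coincide.

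Second, in Step~4 the claim that the predictor lowers the energy by only $\mathcal{O}(\tau_k\|\mathcal{A}_{U_k}U_k\|_a^2)$ does not follow from \eqref{eq: energy difference} and the paper's estimates. Those bound $-(\mathcal{A}_UU,U)_a$ only from below, and you need the reverse bound. It is true and short. Set $S_1=\langle U,U\rangle$, $S_2=\langle U,\mathcal{G}U\rangle$, $W=\mathcal{A}_UU$. Substitute $U=\mathcal{G}US_1S_2^{-1}-WS_2^{-1}$ and use $a(w,\mathcal{G}v)=(w,v)$ and $\langle W,U\rangle=S_1S_2-S_2S_1$ to get
\[
-(W,U)_a=\operatorname{tr}\big(\langle W,W\rangle_aS_2^{-1}\big)-\tfrac12\|T-T^\top\|^2,\qquad T=S_2^{-1/2}S_1S_2^{1/2}.
\]
Hence $-(W,U)_a\leqslant 2E(U)\|W\|_a^2$ by Lemma~\ref{lem:bound of UGU}. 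With this inserted your telescoping goes through; the corrector contribution $\tau_k\operatorname{tr}(\langle U_k,U_k\rangle O_k)\leqslant C\hat\kappa^{2k}$ is already fine. Alternatively, use the paper's expansion about $V_*Q_*$.
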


\begin{proof} 
	Throughout the subsequent analysis, we denote by $C > 0$ a generic constant independent of the iteration index $n$.
	
	Since $\lim_{n \to \infty}\langle U_n,U_n \rangle_a = \langle V_*Q_*, V_*Q_* \rangle_a =Q_*^\top \Lambda Q_*$, for sufficiently large $n$, there exists an $\varepsilon > 0$ such that $\langle \hat{U}_{n+1}, \mathcal{G}\hat{U}_{n+1} \rangle \geqslant (\frac{1}{\lambda_N}-\frac{\varepsilon}{2})I_N$. Substituting this spectral bound into the orthogonality error recurrence \eqref{eq: orthogonality error realtion} yields
	\begin{equation*}
		\|O_{n+1}\|^2 \leqslant \left(1 - 2\tau_n\left(\frac{1}{\lambda_N} - \frac{\varepsilon}{2}\right)\right) \|O_n\|^2.
	\end{equation*}
	Because $\frac{1}{\lambda_N} - \frac{\varepsilon}{2} \geqslant \frac{1}{\lambda_N} - \frac{1}{\lambda_{N+1}} - \varepsilon$, it holds that $1 - 2\tau_n(\frac{1}{\lambda_N} - \frac{\varepsilon}{2}) \leqslant \kappa(\tau_n) \leqslant \kappa_{\max} = \hat{\kappa}^4$. Thus,
	\begin{equation*}
		\|O_{n+1}\|^2 \leqslant \hat{\kappa}^4 \|O_n\|^2,
	\end{equation*}
	which guarantees the uniform exponential decay
	\begin{equation}\label{eq: new On bound}
		\|O_n\|^2 \leqslant C \hat{\kappa}^{4n} \implies \|O_n\| \leqslant C \hat{\kappa}^{2n}.
	\end{equation}
	
	Inserting \eqref{eq: new On bound} into the contraction inequality from Lemma \ref{lem: predictor contraction} produces the inequality
	\begin{equation*}
		\left\| \mathcal{A}_{\hat{U}_{n+1}}\hat{U}_{n+1} \right\|_a^2 \leqslant \hat{\kappa}^4 \left\| \mathcal{A}_{U_n}U_n \right\|_a^2 + C \hat{\kappa}^{2n}.
	\end{equation*}
	Since $\hat{\kappa} \in (0, 1)$ ensures $\hat{\kappa}^4 < \hat{\kappa}^2$, standard theory for linear difference inequalities enforces that the sequence is asymptotically dominated by the slower-decaying non-homogeneous term. Therefore,
	\begin{equation*}
		\left\| \mathcal{A}_{U_n}U_n \right\|_a^2 \leqslant C \hat{\kappa}^{2n} \implies \left\| \mathcal{A}_{U_n}U_n \right\|_a \leqslant C \hat{\kappa}^n.
	\end{equation*}
	Applying the triangle inequality to the gradient expression and utilizing the uniform boundedness of the operator $\mathcal{G}$ yields
	\begin{equation*}
		\left\| \nabla_G E_{\mathcal{G}}(U_n)\right\|_a \leqslant \left\| \mathcal{A}_{U_n}U_n \right\|_a + \left\| \mathcal{G}U_n O_n \right\|_a \leqslant C \hat{\kappa}^n + C \hat{\kappa}^{2n} \leqslant C \hat{\kappa}^n.
	\end{equation*}
	
	To evaluate the sequence convergence, we bound the distance between successive iterates as
	\begin{equation*}
		\begin{aligned}
			\left\| U_{n+1} - U_n \right\|_a 
			&\leqslant \left\| U_{n+1} - \hat{U}_{n+1} \right\|_a + \left\| \hat{U}_{n+1} - U_n \right\|_a
			\\
			&= \tau_n \left\| \mathcal{G}\hat{U}_{n+1} O_n \right\|_a + \tau_n \left\| \mathcal{A}_{U_n}\tilde{U}_{n+1/2} \right\|_a
			\\
			&\leqslant C \|O_n\| + C \|\mathcal{A}_{U_n}U_n\|_a
			\\
			&\leqslant C \hat{\kappa}^{2n} + C \hat{\kappa}^n \leqslant C \hat{\kappa}^n.
		\end{aligned}
	\end{equation*}
	Summing this sequence to infinity establishes the strong orbital-wise convergence
	\begin{equation*}
		\left\| U_n - V_* Q_* \right\|_a \leqslant \sum_{k=n}^{\infty} \left\| U_{k+1} - U_k \right\|_a \leqslant C \hat{\kappa}^n.
	\end{equation*}
	
	Finally, the energy deviation expands exactly as
	\begin{equation*}
		\begin{aligned}
			E(U_n) - E(V_*) &= \frac{1}{2}(U_n, U_n)_a - \frac{1}{2}(V_*, V_*)_a 
			\\
			&= (V_* Q_*, U_n - V_* Q_*)_a + \frac{1}{2}\|U_n - V_* Q_*\|_a^2.
		\end{aligned}
	\end{equation*}
	Exploiting the eigenvalue relation $\langle V_*, U_n - V_* Q_* \rangle_a = \Lambda \langle V_*, U_n - V_* Q_* \rangle$, the linear term becomes
	\begin{equation*}
		\begin{aligned}
			(V_* Q_*, U_n - V_* Q_*)_a &= \operatorname{tr}(Q_*^\top \langle V_*, U_n - V_* Q_* \rangle_a) 
			\\
			&= \operatorname{tr}(\Lambda \langle V_* Q_*, U_n - V_* Q_* \rangle).
		\end{aligned}
	\end{equation*}
	By expanding $O_n = \langle U_n, U_n \rangle - I_N = \langle V_* Q_* + (U_n - V_* Q_*), V_* Q_* + (U_n - V_* Q_*) \rangle - I_N$, we extract the symmetric component
	\begin{equation*}
		\langle V_* Q_*, U_n - V_* Q_* \rangle + \langle U_n - V_* Q_*, V_* Q_* \rangle = O_n - \langle U_n - V_* Q_*, U_n - V_* Q_* \rangle.
	\end{equation*}
	Since $\Lambda$ is a diagonal matrix, the trace of its product with an anti-symmetric matrix vanishes, allowing direct substitution of the symmetric part to yield
	\begin{equation*}
		(V_* Q_*, U_n - V_* Q_*)_a = \frac{1}{2}\operatorname{tr}\left(\Lambda\left(O_n - \langle U_n - V_* Q_*, U_n - V_* Q_* \rangle\right)\right).
	\end{equation*}
	Taking absolute bounds provides the final energy estimate
	\begin{equation*}
		\begin{aligned}
			E(U_n) - E(V_*) &\leqslant C \|O_n\| + C \|U_n - V_* Q_*\|_a^2 
			\\
			&\leqslant C \hat{\kappa}^{2n} + C (\hat{\kappa}^n)^2 \leqslant C \hat{\kappa}^{2n}.
		\end{aligned}
	\end{equation*}
	This completes the proof.
\end{proof}

}

\section{Numerical experiments}\label{section:Numerical experiments}

To evaluate the performance of the proposed quasi-orthogonal framework, we consider two quantum mechanical models, specifically the two-dimensional harmonic oscillator and the three-dimensional Schr\"odinger equation for the hydrogen atom. All computational experiments are executed on the LSSC-IV platform at the Academy of Mathematics and Systems Science, Chinese Academy of Sciences.

The continuous evolution model accommodates various spatial discretization strategies. For the subsequent numerical investigations, we discretize the spatial domain utilizing the finite element method formulated with quadratic elements. Let $V^{N_g} \subset H_0^1(\Omega)$ denote an $N_g$-dimensional finite element space spanned by the basis functions $\phi_1, \phi_2, \ldots, \phi_{N_g}$. Any $U \in (V^{N_g})^N$ admits a unique coefficient matrix $C \in \mathbb{R}^{N_g \times N}$ satisfying
$$
U = \left(\sum_{j=1}^{N_g} c_{j 1} \phi_j, \sum_{j=1}^{N_g} c_{j 2} \phi_j, \ldots, \sum_{j=1}^{N_g} c_{j N} \phi_j\right).
$$
The implementation details of \eqref{equ: numerical scheme} are formally summarized in Algorithm \ref{alg:Discretization scheme}.

\begin{algorithm}[!h]
	\caption{Quasi-orthogonality algorithm}
	\label{alg:Discretization scheme}
	\begin{algorithmic}[1]
		\STATE Given tolerance $\epsilon>0$, maximum step size $\delta_T>0$, and initial data $U_0\in (\mathcal{V}^{N_g})^N$, compute the initial gradient $\nabla_GE(U_0)$ and initialize $n = 0$;
		\WHILE{$\|\nabla_GE(U_n)\| > \epsilon$}
		\STATE Select time step $\tau_n \leqslant \delta_T$;
		\STATE Execute the scheme \eqref{equ: numerical scheme} to obtain the updated state $U_{n+1}$;
		\STATE Let $n = n+1$ and compute $\nabla_GE(U_n)$;
		\ENDWHILE
	\end{algorithmic}
\end{algorithm}

We standardize the computational environment for all subsequent tests by establishing the following settings.

Reference eigenpairs $(V_*,\Lambda)$ are computed using external eigensolvers within the identical finite element space $\mathcal{V}^{N_g}$. The discrete trajectory is initialized with random data $U_0$\footnote{This unrestricted initialization demonstrates algorithmic robustness to arbitrary initial data, whereas formal stability proofs assume $U_0 \in \mathcal{M}_{\geqslant}^N$.}, and time integration employs a uniform step size $\tau$\footnote{Although adaptive time-stepping could optimize computational cost, it remains outside the scope of this study.}. Iterations terminate when the energy gradient norm satisfies $\| \nabla_G E_{\mathcal{G}}(U_n)\| < 10^{-5}$.

To quantify the numerical accuracy, we define the relative eigenvector error as
$$
\text{err}_{U_n} = {\|U_n-U_{\text{end}}\|}/{\|U_{\text{end}}\|},
$$
with $U_{\text{end}}$ denoting the state at the final iteration. The continuous asymptotic decay of $\text{err}_{U_n}$ provides verification for the theoretical orbital-wise convergence. The relative error for the individual eigenvalues is evaluated by
$$
\text{err}_i = {|\lambda_i-\lambda_i^*|}/{|\lambda_i^*|}, \quad i = 1,2, \cdots, N,
$$
where $\lambda_i$ denote the eigenvalues of $\langle \mathcal{G}U_{\text{end}}, U_{\text{end}} \rangle^{-1}$, and $\lambda_i^*$ represent the exact diagonal elements of the reference eigenvalue matrix $\Lambda$.

\begin{example}\label{example:harmonic}
	We first consider the two-dimensional quantum harmonic oscillator \cite{ReedSimonIV}, which requires identifying the eigenpairs $(\lambda, u) \in \mathbb{R} \times H^1(\mathbb{R}^2)$ fulfilling the relation
	\begin{equation*}\label{eq:2D harmonic oscillator equation}
		-\frac{1}{2} \Delta u+\frac{1}{2}|x|^2 u = \lambda u, \qquad \int_{\mathbb{R}^2} u^2 = 1,
	\end{equation*}
	where $|x| = \sqrt{|x_1|^2+|x_2|^2}$. The exact eigenvalues of this system are expressed as
	$$
	\lambda_{n_1, n_2} = \left(n_1+\frac{1}{2}\right)+\left(n_2+\frac{1}{2}\right),\quad n_1, n_2=0,1, \cdots,
	$$
	while the corresponding eigenfunctions are
	$$
	u_{n_1, n_2}(x) = \mathcal{H}_{n_1}(x_1) e^{-x_1^2 / 2} \mathcal{H}_{n_2}(x_2) e^{-x_2^2 / 2}, \quad n_1, n_2=0,1, \cdots,
	$$
	where $\mathcal{H}_n$ denotes the standard Hermite polynomials.
	
	Exploiting the rapid exponential decay of the wavefunctions, we truncate the unbounded domain to a finite region. This reduction reformulates the model to seek $(\lambda, u) \in \mathbb{R} \times H_0^1(\Omega)$ satisfying
	\begin{equation*}
		-\frac{1}{2} \Delta u+\frac{1}{2}|x|^2 u = \lambda u, \qquad \int_{\Omega} u^2  = 1,
	\end{equation*}
	where $\Omega=(-5.5,5.5)^2$. We compute the first $N=15$ smallest eigenvalues and their corresponding eigenfunctions. It is resolved using a uniform finite element mesh with $N_g = 39601$ degrees of freedom with a time step size $\tau = 0.05$. The reference solutions are computed using the \emph{eigs} from the \emph{Arpack.jl}.
		\begin{table}[htbp]
		\centering
		\caption{Relative eigenvalue errors $\text{err}_{i}$ under different time steps $\tau$}
		\label{table:Mesh_independent_time_step}
		
		\renewcommand{\arraystretch}{1.0} 
		\setlength{\tabcolsep}{10pt}      
		
		\begin{tabular}{>{\bfseries\small}r *{6}{>{\footnotesize}c}}
			\toprule
			& \multicolumn{6}{c}{\textbf{\small Time Step} $\bm{\tau}$} \\
			\cmidrule(lr){2-7}
			${\large \bm{\text{err}_{i}}}$ & $\bm{\tau = 0.01}$ & $\bm{\tau = 0.05}$ & $\bm{\tau = 0.1}$ & $\bm{\tau = 0.5}$ & $\bm{\tau = 1.0}$ & $\bm{\tau = 1.5}$ \\
			& {\scriptsize 28333 steps} & {\scriptsize 6072 steps} & {\scriptsize 3042 steps} & {\scriptsize 778 steps} & {\scriptsize 310 steps} & {\scriptsize 226 steps} \\
			\midrule
			
			1  & 2.376e-14 & 2.509e-14 & 2.509e-13 & 3.375e-14 & 4.752e-13 & 1.896e-13 \\
			\addlinespace[0.4em] 
			
			2  & 2.753e-13 & 6.994e-13 & 2.720e-13 & 3.499e-13 & 2.409e-13 & 2.078e-13 \\
			3  & 1.834e-13 & 2.642e-14 & 1.978e-13 & 2.456e-13 & 7.194e-14 & 1.403e-13 \\
			\addlinespace[0.4em] 
			
			4  & 2.332e-12 & 2.239e-12 & 2.115e-12 & 2.091e-12 & 2.497e-12 & 2.655e-12 \\
			5  & 1.256e-12 & 1.062e-12 & 1.099e-12 & 9.982e-13 & 1.259e-12 & 1.007e-12 \\
			6  & 1.283e-12 & 1.169e-12 & 1.237e-12 & 1.624e-12 & 1.276e-12 & 1.652e-12 \\
			\addlinespace[0.4em] 
			
			7  & 7.212e-13 & 4.385e-13 & 5.442e-13 & 1.657e-12 & 1.664e-12 & 1.247e-12 \\
			8  & 2.025e-13 & 3.972e-13 & 4.055e-13 & 9.506e-13 & 8.207e-13 & 8.828e-13 \\
			9  & 7.700e-13 & 1.554e-13 & 2.747e-13 & 1.776e-13 & 1.606e-12 & 1.013e-12 \\
			10 & 1.095e-12 & 1.303e-12 & 1.401e-12 & 1.591e-12 & 1.789e-12 & 1.487e-12 \\
			\addlinespace[0.4em]
			
			11 & 2.258e-09 & 4.398e-09 & 1.251e-08 & 1.488e-10 & 5.756e-09 & 1.084e-10 \\
			12 & 5.133e-10 & 3.997e-10 & 4.743e-11 & 2.044e-12 & 2.489e-10 & 3.927e-11 \\
			13 & 7.496e-09 & 1.435e-09 & 5.751e-10 & 9.317e-09 & 4.897e-09 & 7.591e-09 \\
			14 & 8.221e-10 & 3.224e-10 & 4.168e-10 & 4.619e-12 & 5.623e-11 & 2.570e-11 \\
			15 & 3.902e-09 & 8.422e-09 & 1.378e-09 & 5.432e-09 & 3.404e-09 & 6.397e-09 \\
			\bottomrule
		\end{tabular}
	\end{table}
	
	Table \ref {table:Mesh_independent_time_step} catalogs the relative eigenvalue errors \(\text{err}_i\). Numerical accuracy remains robust when the time step size \(\tau\) is systematically increased over a fixed spatial mesh. This observation verifies that the admissible time step is independent of spatial discretization, allowing the scheme \eqref{equ: numerical scheme} to overcome conventional mesh-dependent stability constraints.
	 Utilizing a larger $\tau$ reduces the iteration count and accelerates convergence. Based on these observations, we fix the step size at $\tau_n = 1.0$ for the remainder of this experiment.

Fig.~\ref{fig:720924energyall} illustrates the monotonic and exponential decay of the discrete energy $E(U_n)$. The orthogonality error $\|I_{N}-\langle U_n,U_n\rangle\|$ plotted in Fig.~\ref{fig:720924orthoerrorall} decays exponentially to high precision, verifying the preservation of quasi-orthogonality.

The gradient norm in Fig.~\ref{fig:720924gradientnorm} and the relative eigenvector error $\text{err}_{U_n}$ in Fig.~\ref{fig:720924eigenfun} exhibit exponential decay after a brief transient phase. Fig.~\ref{fig:720924eigenfun} further confirms the strict orbital-wise convergence of the discrete state.

	\begin{figure}[htbp] 
		\centering
		\begin{subfigure}[b]{0.45\linewidth}  
			\centering
			\includegraphics[width=\linewidth]{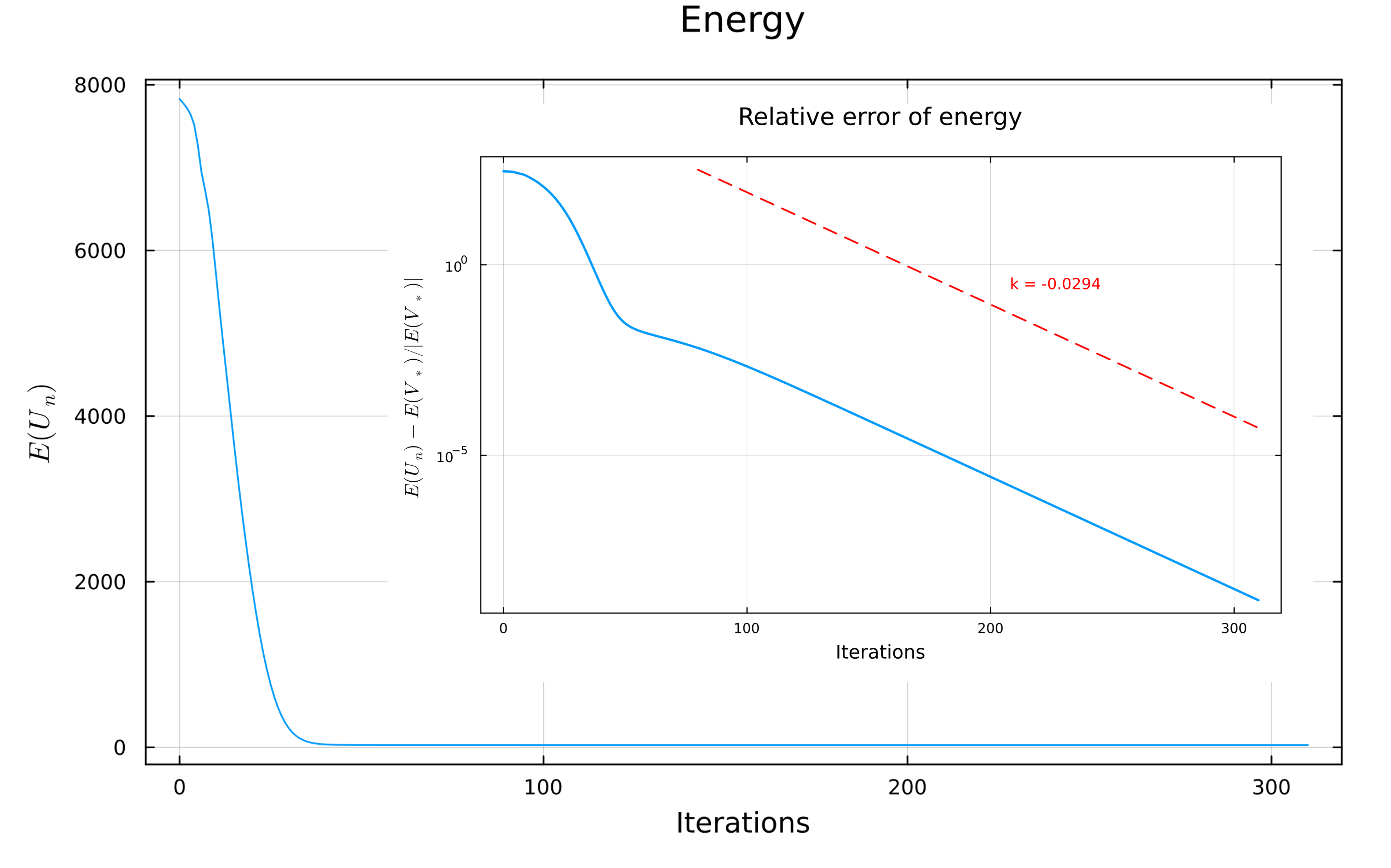}
			\caption{Energy of Example \ref{example:harmonic}} 
			\label{fig:720924energyall}  
		\end{subfigure}
		\hfill 
		\begin{subfigure}[b]{0.45\linewidth}
			\centering
			\includegraphics[width=\linewidth]{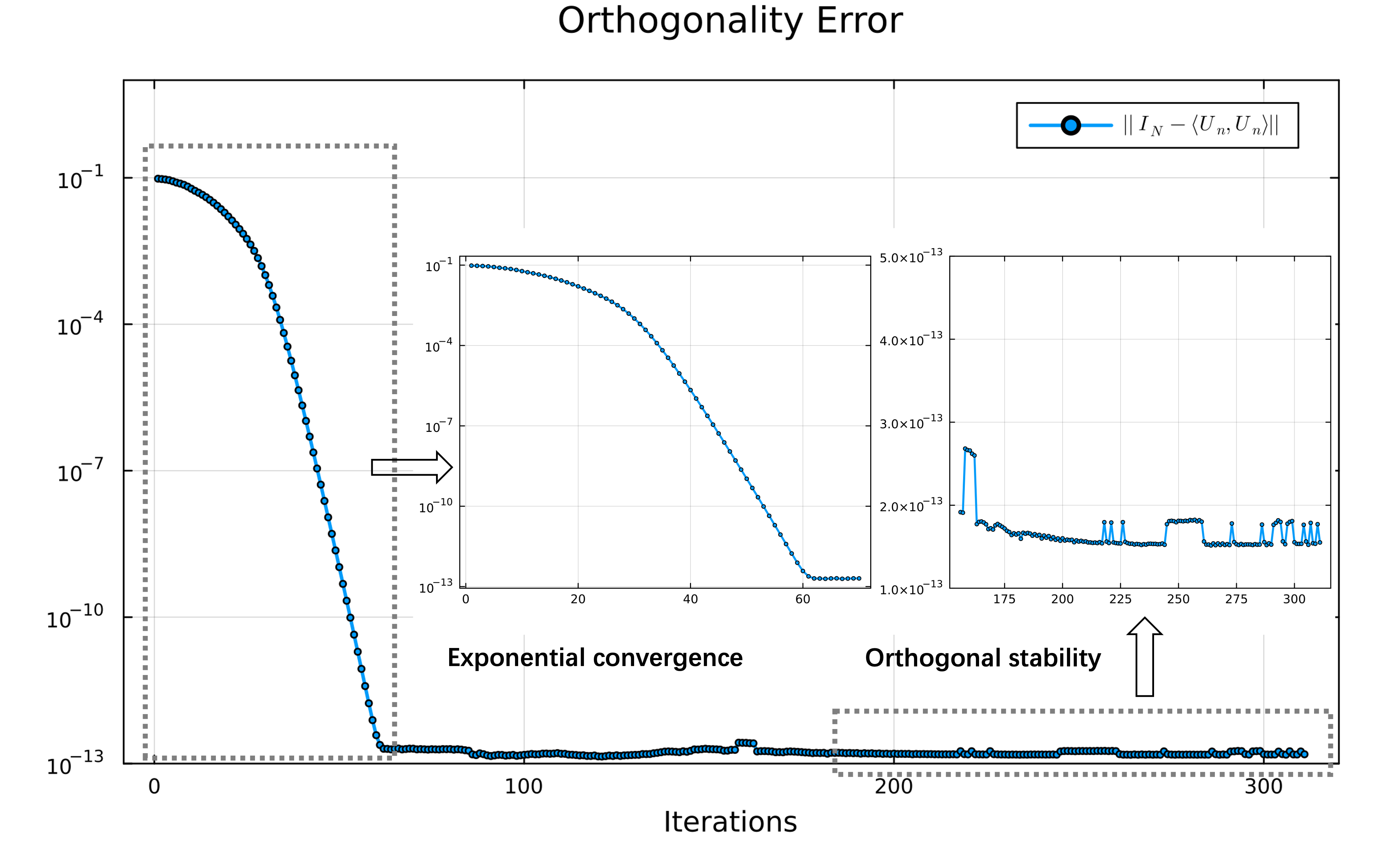}
			\caption{Orthogonality error of Example \ref{example:harmonic}} 
			\label{fig:720924orthoerrorall}  
		\end{subfigure}
		
		\vspace{-0.2cm} 
		
		\begin{subfigure}[b]{0.45\linewidth}  
			\centering
			\includegraphics[width=\linewidth]{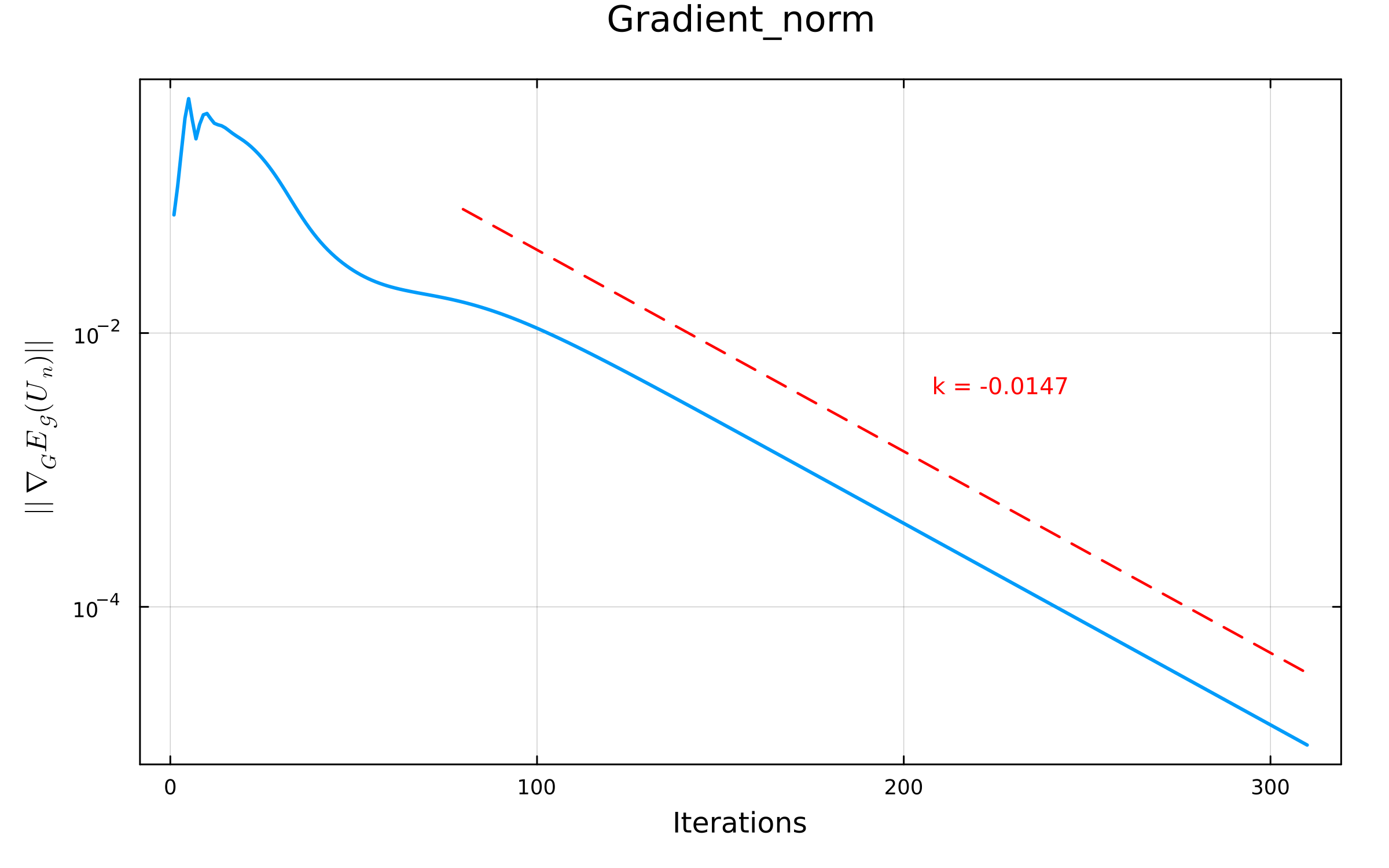}
			\caption{Gradient norm of Example \ref{example:harmonic}} 
			\label{fig:720924gradientnorm}  
		\end{subfigure}
		\hfill 
		\begin{subfigure}[b]{0.45\linewidth}
			\centering
			\includegraphics[width=\linewidth]{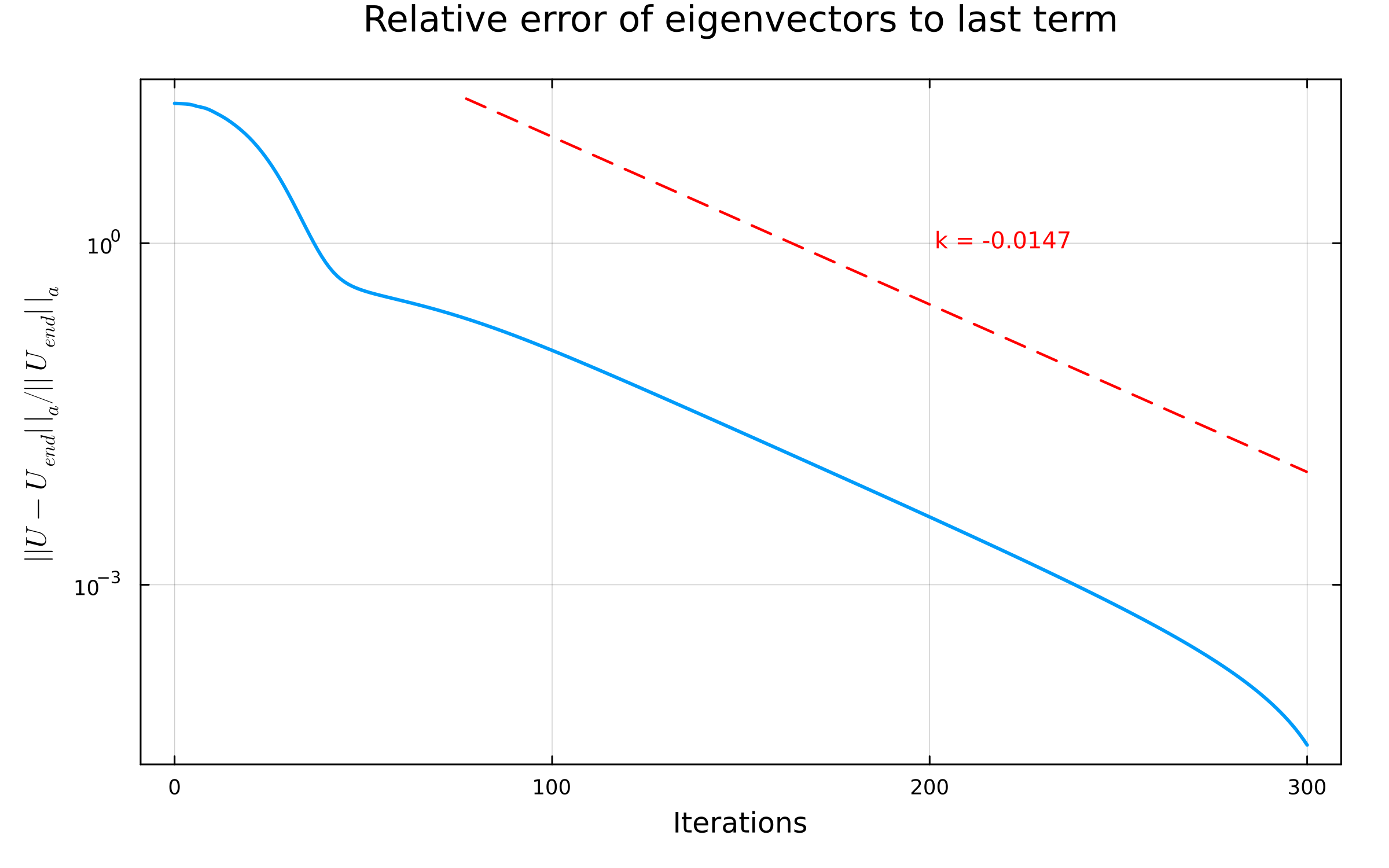}
			\caption{Solution convergence curve of Example \ref{example:harmonic}} 
			\label{fig:720924eigenfun}
		\end{subfigure}
		
		\caption{Convergence result of Example \ref{example:harmonic}}
		\label{fig:harmonic_all_plots} 
	\end{figure}
	
\end{example}

\begin{example}\label{example:hydrogen}
	We consider the three-dimensional Schr\"odinger equation for the hydrogen atom \cite{greiner2011quantum}, which requires finding eigenpairs $(u,\lambda) \in H^1(\mathbb{R}^3) \times \mathbb{R}$ satisfying
	\begin{equation*}\label{eq:3D hydrogen}
		\left(-\frac{1}{2} \Delta-\frac{1}{|x|}\right) u = \lambda u, \qquad \int_{\mathbb{R}^3}|u|^2  = 1.
	\end{equation*}
	The exact eigenvalues are $\lambda_n = -\frac{1}{2 n^2}$ for $n = 1,2, \cdots$, with $n^2$ multiplicity of $\lambda_n$.
	
	Exploiting the rapid exponential decay of the bound states, we truncate the unbounded operator to a finite domain. This reduction formulates the corresponding eigenvalue problem to seek $(\lambda, u) \in \mathbb{R} \times H_0^1(\Omega)$ satisfying
	\begin{equation*}\label{eq:3D bound hydrogen}
		\left(-\frac{1}{2} \Delta-\frac{1}{|x|}\right) u = \lambda u, \qquad \int_{\Omega} u^2 = 1,
	\end{equation*}
	where $\Omega = (-20.0,20.0)^3$. We aim to compute approximations for the first two distinct energy levels alongside their eigenfunctions, that is, computing the first $N=5$ eigenvalues.
	
	To accurately resolve the localized wave behavior near the singularity at the origin, we deploy an adaptive finite element method \cite{dai2015convergence} generating a non-uniform mesh with $N_g = 570662$ degrees of freedom. The reference eigenvalues $\lambda_\text{ref}$ and their associated residual norms $r_\text{ref} = \|\mathcal{H}u_\text{ref} -\lambda_\text{ref}u_\text{ref}\|$ are compiled in Table \ref{table:reference eigenvalues}, computed using the \emph{eigsolve} from the \emph{IterativeSolvers.jl}.
	
	\begin{table}[htbp]
		\centering
		\small
		\begin{minipage}[t]{0.48\linewidth}
			\centering
			\caption{Reference eigenvalues $\lambda_\text{ref}$ with its residual norm $r_\text{ref}$}
			\label{table:reference eigenvalues}
			\begin{tabular}{ccc}
				\hline
				$i$ & $\lambda_\text{ref}$ & $r_\text{ref}$ \\
				\hline
				1 & $-0.4999583481345601$ & $2.605\times10^{-9}$ \\
				2 & $-0.1249998780617823$ & $3.873\times10^{-7}$ \\
				3 & $-0.1249998492802271$ & $3.828\times10^{-7}$ \\
				4 & $-0.1249992663791233$ & $1.244\times10^{-4}$ \\
				5 & $-0.1249961959501441$ & $2.793\times10^{-6}$ \\
				\hline
				&\multicolumn{2}{l}{$E_\text{ref} = -0.49997676890291845$} \\
				\hline
			\end{tabular}
		\end{minipage}\hfill
		\begin{minipage}[t]{0.48\linewidth}
			\centering
			\caption{Comparison of computed and reference eigenvalues}
			\label{tab:compute 5 eigenvalues of hydrogen}
			\begin{tabular}{ccc}
				\hline
				$i$ & $\lambda_i$ & $\text{err}_{i}$ \\
				\hline
				1 & $-0.4999583481411589$ & $1.320\times10^{-11}$ \\
				2 & $ -0.12499986976256028$ & $6.639\times10^{-8}$ \\
				3 & $-0.12499985826532467$ & $7.188\times10^{-8}$ \\
				4 & $-0.12499985099474009$ & $4.677\times10^{-6}$ \\
				5 & $-0.12499619458243827$ & $1.094\times10^{-8}$ \\
				\hline
				&\multicolumn{2}{l}{$E(U_\text{end})= -0.499977060538213$} \\
				\hline
			\end{tabular}
		\end{minipage}
	\end{table}
	Table \ref{tab:compute 5 eigenvalues of hydrogen} presents a quantitative comparison between the numerically computed eigenvalues $\lambda_i$ (for $i=1,2,\dots,5$) and their reference values, with the relative error $\text{err}_i$ quantifying the discrepancy for each eigenvalue. All computed eigenvalues exhibit high precision against the reference values, demonstrating the method's accuracy in eigenvalue approximation. 

	Consistent with the convergence behavior observed in the previous example, Fig.~\ref{fig:720914energyall} illustrates the monotonic and exponential decay of the discrete energy $E(U_n)$. The orthogonality error $\|I_{N}-\langle U_n,U_n\rangle\|$ plotted in Fig.~\ref{fig:720914orthoerrorall} decays exponentially to high precision, verifying the preservation of quasi-orthogonality. 
	The gradient norm in Fig.~\ref{fig:720914gradientnorm} and the relative eigenvector error $\text{err}_{U_n}$ in Fig.~\ref{fig:720914eigenfunerrshiftschrodingerhydrogen3dwithn5dt1} exhibit a similar exponential decay after a brief transient phase, confirming the orbital-wise convergence of the discrete state.

	\begin{figure}[htbp] 
		\centering
		\begin{subfigure}[b]{0.45\linewidth}  
			\centering
			\includegraphics[width=\linewidth]{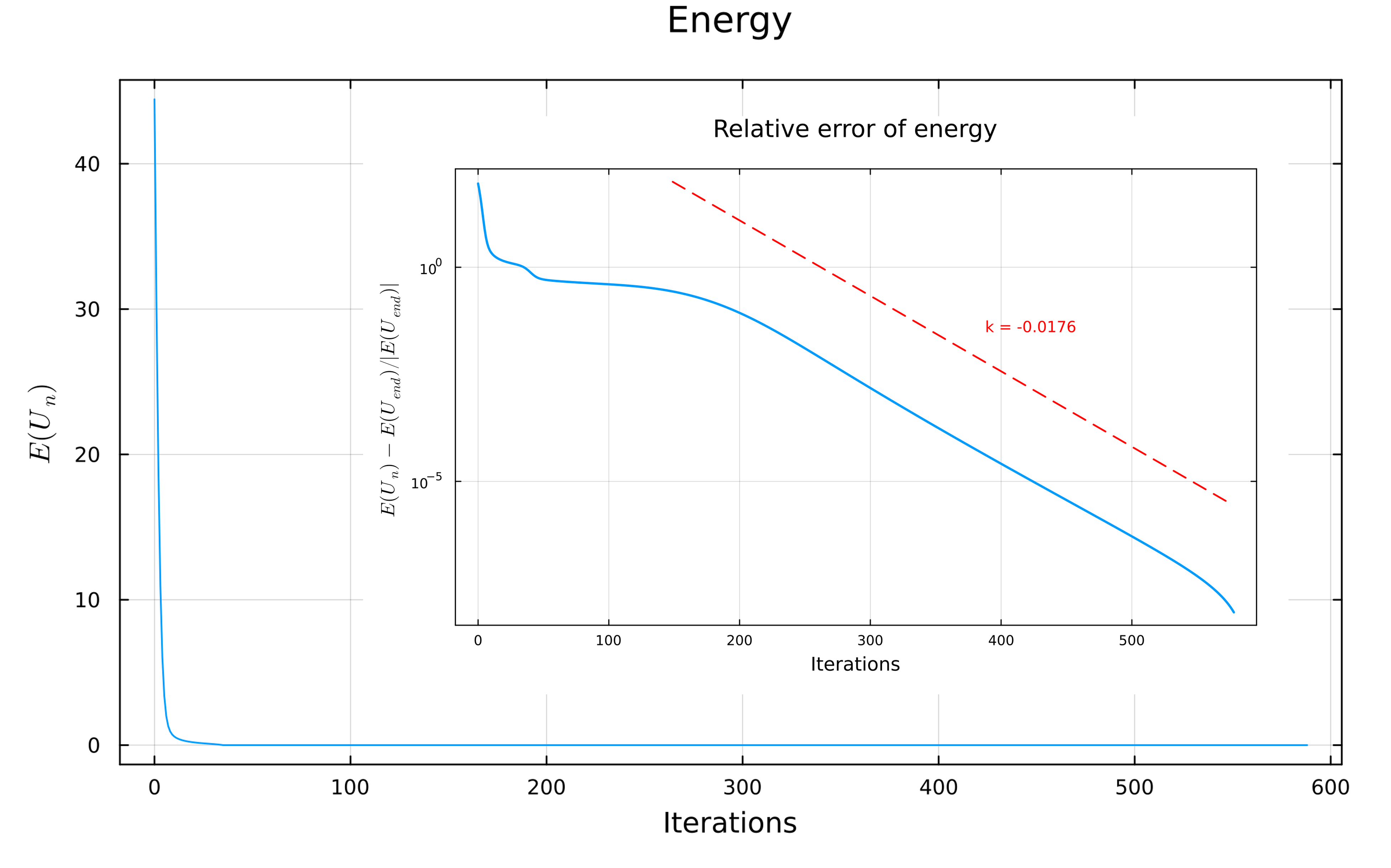}
			\caption{Energy of Example \ref{example:hydrogen}} 
			\label{fig:720914energyall}  
		\end{subfigure}
		\hfill 
		\begin{subfigure}[b]{0.45\linewidth}
			\centering
			\includegraphics[width=\linewidth]{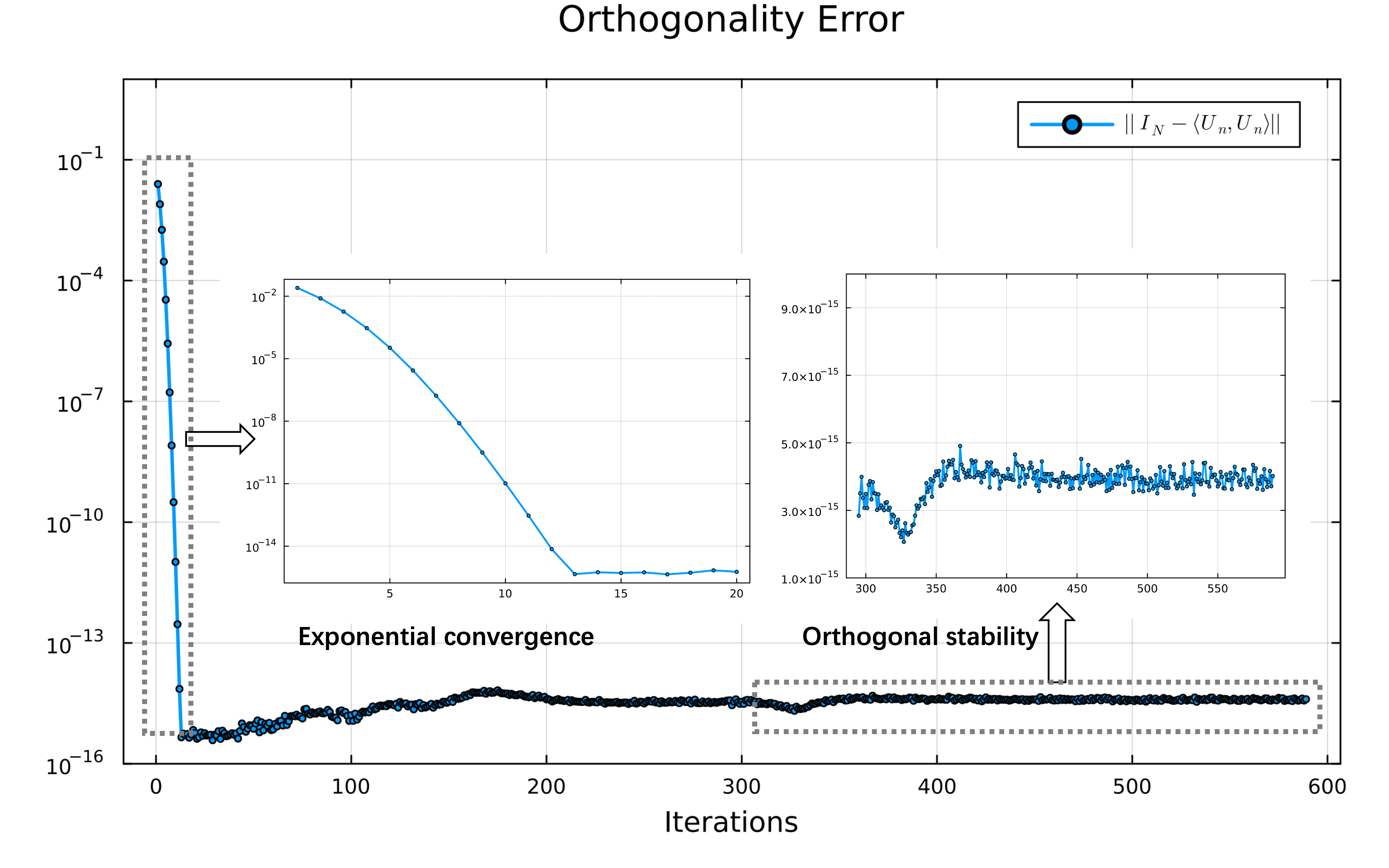}
			\caption{Orthogonality error of Example \ref{example:hydrogen}} 
			\label{fig:720914orthoerrorall}  
		\end{subfigure}
		
		\vspace{-0.2cm} 
		
		\begin{subfigure}[b]{0.45\linewidth}  
			\centering
			\includegraphics[width=\linewidth]{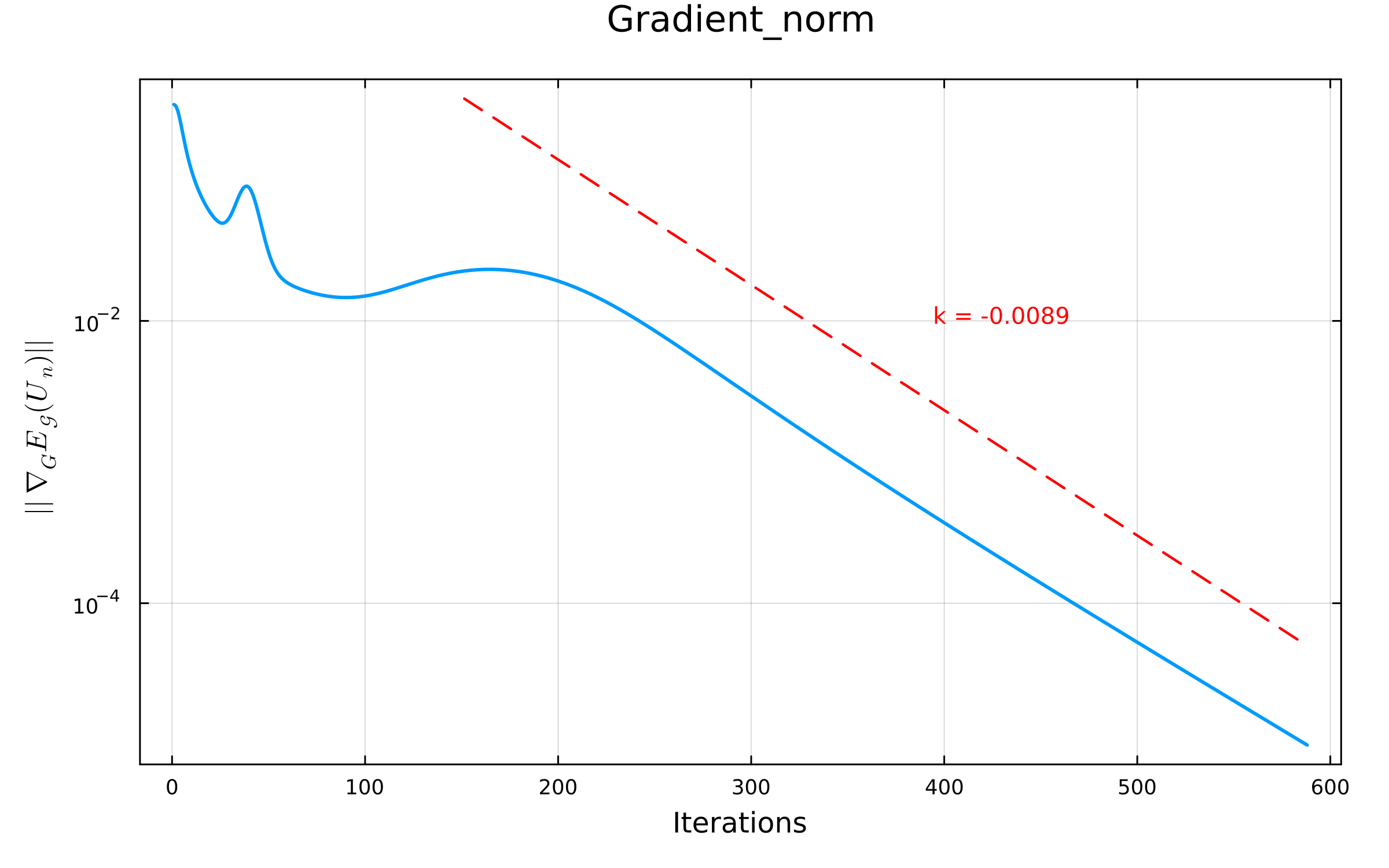}
			\caption{Gradient norm of Example \ref{example:hydrogen}} 
			\label{fig:720914gradientnorm}  
		\end{subfigure}
		\hfill 
		\begin{subfigure}[b]{0.45\linewidth}
			\centering
			\includegraphics[width=\linewidth]{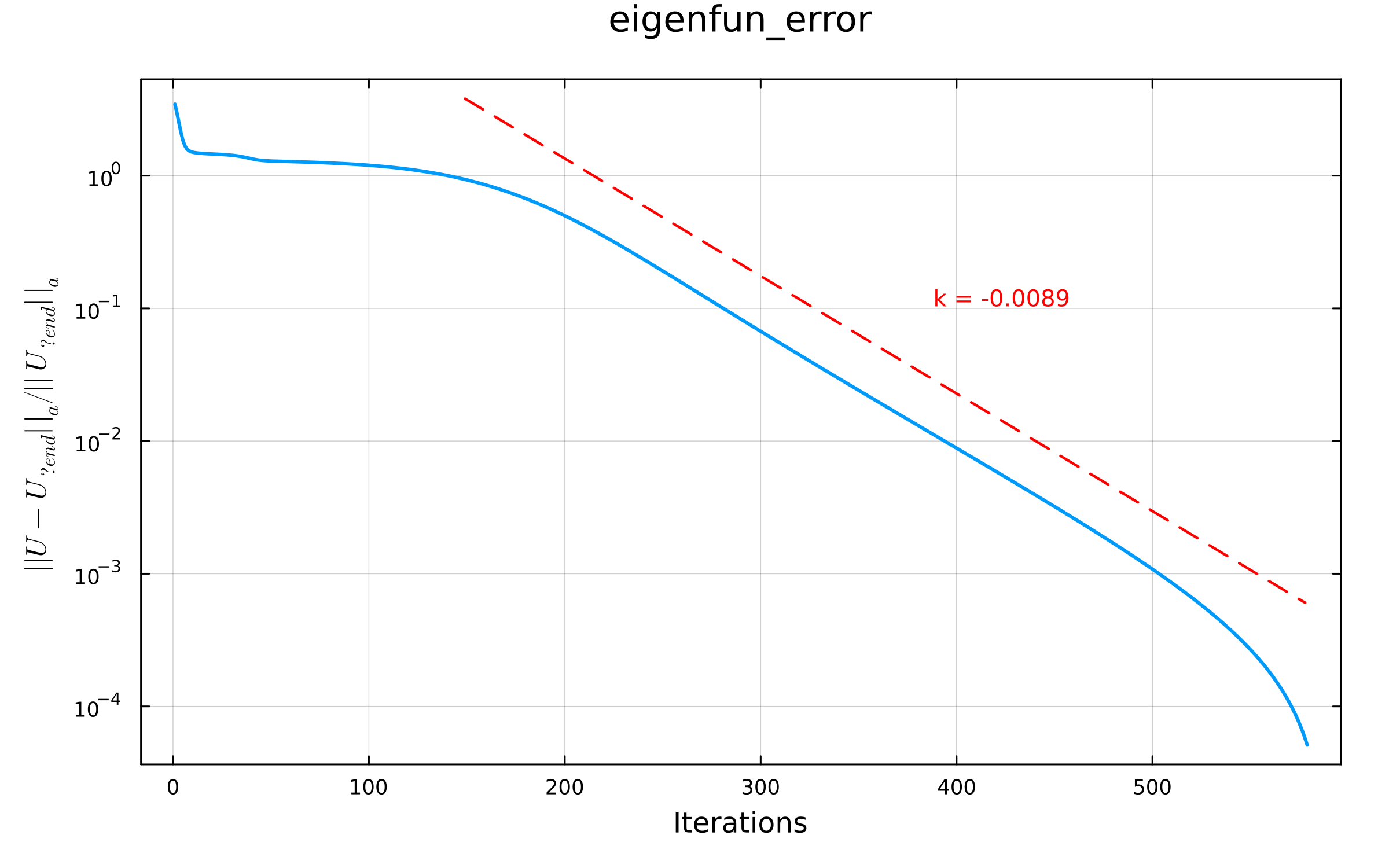}
			\caption{Solution convergence curve of Example \ref{example:hydrogen}} 
			\label{fig:720914eigenfunerrshiftschrodingerhydrogen3dwithn5dt1}
		\end{subfigure}
		
		\caption{Convergence result of Example \ref{example:hydrogen}}
		\label{fig:hydrogen_all_plots} 
	\end{figure}
\end{example}

In summary, the numerical experiments empirically validate the theoretical findings. The algorithm ensures monotonic energy dissipation and autonomously preserves orbital quasi-orthogonality. The exponential convergence of both energy and eigenvectors underscores the computational efficiency of the framework. Furthermore, the demonstrated mesh-independence of the time-step constraint allows for aggressive time-stepping strategies, confirming the applicability of the quasi-orthogonal method for complex eigenvalue computations.

	\section{Conclusion}\label{sec: conclusion}
	This paper proposed a continuous quasi-orthogonal evolution model based on the inverse operator and a corresponding discrete numerical scheme for computing many eigenpairs. Because the numerical approximations inherently lie within a quasi-Stiefel set, the framework entirely eliminates explicit orthogonalization while achieving asymptotic convergence to the exact eigenspace from arbitrary random initial data. Furthermore, infinite-dimensional stability analysis ensures that the scheme guarantees monotonic discrete energy dissipation under a mesh-independent time-step restriction. By circumventing conventional mesh-dependent stability limitations, this mesh independence allows larger time steps to improve global convergence. The preservation of discrete quasi-orthogonality, together with exponential convergence rates for the discrete energy, gradient residual, and eigenfunction approximations, is theoretically proven and numerically validated.
	
	Our ongoing research takes numerical stability into full consideration, generalizes the quasi-orthogonal framework to nonlinear Schrödinger equations, and conducts mathematical analysis on numerical perturbation errors. To further maximize computational efficiency for extremely large-scale systems, we are also integrating algorithmic acceleration techniques such as adaptive time-stepping and preconditioning strategies.
	
	Finally, we remark that this quasi-orthogonal algorithm is naturally applicable to computing clustered eigenpairs across a broader class of problems, including integral operators and massive dense matrices arising in deep neural networks and large-scale attention mechanisms.

	% References
	\bibliographystyle{siamplain}
	\bibliography{references}
	
\end{document}